\def\de{\delta}
\def\e{\varepsilon}
\def\o{\omega}
\def\O{\Omega}
\def\RR{{\mathord{I\!\! R}}}
\def\NN{{\mathbb{N}}}
\def\div{{\mathrm{div}}}
\def\Ldeux{{L^2}}
\def\Linf{{L^{\infty}}}
\def\Vert{||}
\definecolor{gr}{rgb}   {0.,   0.69,   0.23 }
\definecolor{bl}{rgb}   {0.,   0.5,   1. }
\definecolor{mg}{rgb}   {0.85,  0.,    0.85}
\definecolor{yl}{rgb}   {0.8,  0.7,   0.}
\def\dd#1#2{\dfrac{\partial #1}{\partial #2}}
\def\div{{\mathrm{div}}}
\def\curl{{\mathrm{curl}}}
\def\LL{{\mathbf{L}}}
\def\HH{{\mathbf{H}}}
\def\integre#1#2#3#4{\int\limits_{ #1 }^{ #2 }{ #3 \: \mathrm{d} \mathrm{#4}}}
\DeclareRobustCommand{\cvfort}{\relbar\joinrel\rightarrow}
\newcommand{\cvfaible}{}
\DeclareRobustCommand{\cvfaible}{\relbar\joinrel\rightharpoonup}
\newtheorem{prpstn}{Proposition}[section]
\newtheorem{lmm}{Lemma}[section]
\newtheorem{thrm}{Theorem}[section]
\newtheorem{dfntn}{Definition}[section]
\title{Convergence of a Vector Penalty Projection Scheme for the Navier-Stokes Equations with moving body}
\author{
Vincent Bruneau$^1$, Adrien Doradoux$^{1,2}$, Pierre Fabrie$^2$ \\
\small $^1$ Universit\'e de Bordeaux, IMB, CNRS UMR5251, 351 cours de la lib\'eration, 33405 Talence - France \\
\small $^2$ Bordeaux INP, Institut de Math\'ematiques de Bordeaux, CNRS UMR5251, ENSEIRB-MATMECA, Talence - France
}
\date{\today}
\begin{document}
\maketitle
\begin{abstract}
In this paper, we analyse a {\it Vector Penalty Projection Scheme} (see \cite{AN08}) to treat the displacement of a moving body in incompressible viscous flows in the case where the interaction of the fluid on the body can be neglected. The presence of the obstacle inside the computational domain is treated with a penalization method introducing a parameter $\eta$.\\
We show the stability of the scheme and that the pressure and velocity converge towards a limit when the penalty parameter $\e$, which induces a small divergence and the time step $\delta t$ tend to zero with a proportionality constraint $\e = \lambda \delta t$.\\
Finally, when $\eta$ goes to $0$, we show that the problem admits a weak limit which is a weak solution of the Navier-Stokes equations with no-sleep condition on the solid boundary.
 \end{abstract}
\vspace{5pt}
\begin{center}
\small \textbf{Résumé}
\end{center}
\vspace{-5pt}
Dans ce travail nous analysons un schéma de projection vectorielle (voir \cite{AN08}) pour traiter le d\'eplacement d'un corps solide dans un fluide visqueux incompressible dans le cas o\`u l'interaction du fluide sur le solide est négligeable. La pr\'esence de l'obstacle dans le domaine solide est mod\'elis\'ee par une m\'ethode de p\'enalisation.\\
Nous montrons la stabilit\'e du sch\'ema et la convergence des variables vitesse-pression vers une limite quand le param\`etre $\e$ qui assure une faible divergence et le pas de temps $\delta t$ tendent vers $0$ avec une contrainte de proportionalit\'e $\e = \lambda \delta t$. \\
Finalement nous montrons que le probl\`eme converge au sens faible vers une solution des \'equations de Navier-Stokes avec une condition aux limites de non glissement sur la frontière immergée quand le param\`etre de p\'enalisation $\eta$ tend vers $0$.

%
%\subjclass[0]{MSC 2010: 35Qxx, 65Mxx, 65Nxx,
% 74F10, 76D05, 76M25}

%
%\keywords{Navier-Stokes equations, Vector Penalty-projection methods, incompressible flows, moving body}
%%-----------------------------
%%      your text
%%-----------------------------

%Introduction
\section*{Introduction}
%%INTRODUCTION
Numerical simulation of unsteady incompressible flows has always been the subject of important investigations. The difficulty arises from the coupling between the velocity and the pressure at each time step due to the incompressibility constraint.\\
Fractional step algorithms are widely used to deal with this problem. Among them, pressure projection methods introduced by Chorin and Temam (\cite{CH68}, \cite{TE69}) reduce the saddle point problem to two distinct elliptic problems on the velocity and the pressure.\\
Recently, {\it Vector Penalty Projection} schemes have been introduced by Angot et al. (\cite{AN08}, \cite{AN12}) and avoid many drawbacks of other projection methods. In particular, the pressure scalar does not need to be computed, which do not impose the resolution of a Poisson type equation that introduce boundary conditions on the pressure. In \cite{AN12c} the authors obtained a second order convergence rate for pressure and velocity in space and time for a second order backward temporal scheme. The convergence towards the Navier-Stokes problem when the penalty parameter (on the divergence) tends to $0$ has been studied in \cite{AN15}.
In these previous works the domain is fixed and no {\it space penalization} is included.  \\
\medskip
Let $\O$ be a simply connected bounded domain of $\RR^d$ ($d=2$ or $3$) and $T>0$.\\
%% NAVIER STOKES EQUATIONS
With Dirichlet boundary condition and a source term $f$ given, the penalized Navier Stokes problem reads in presence of a body moving at the velocity $v_s$:
\begin{equation}\label{eq:NavierStokes}
\begin{split}
&\dd{v}{t} + (v . \nabla) v - \div(2 \mu D(v)) + \dfrac{1}{\eta} \chi_{\o(t)}(v-v_s) + \nabla p = f \text{ on } \O \\
& \div(v) = 0 \text{ on }\O \\
& v=0 \text{ on } \partial \O \\
& v(0,x) = v_0(x) \text{ on }\O
\end{split}
\end{equation}
Where $\chi_{\o(t)}$ is the characteristic function of the solid domain $\o(t)$, $v_s$ is the velocity of the moving body and $D$ is the strain rate tensor.\\
%% HYPOTHESIS
\underline{Hypothesis}\\
$\left( \mathcal{H}1 \right)$: We suppose that $v_s$ is the restriction to $\bigcup\limits_{t<T}\o(t)$ of a function $\psi$ such that:
\begin{equation}\label{eq:psi}
\left\lbrace
\begin{split}
& \div(\psi) = 0 \text{ on }\O\\
& \psi \in L^{\infty}(]0,T[;\LL^{\infty}) \cap L^2(]0,T[;\HH^2) \\
& \nabla \psi \in L^{\infty}(]0,T[;\LL^{\infty}) \\
& \dd{\psi}{t} \in L^2(]0,T[;\LL^2)
\end{split}
\right.
\end{equation}
The existence of such function is ensured as the regularity of $v_s$ is sufficient and the moving body does not meet $\partial \O$. \\
$\left( \mathcal{H}2 \right)$: We assume that $f$ belongs to the space $L^2(]0,T[;\LL^2)$.\\
%% SCHEMA VPP
\vspace*{5pt}
The Vector Penalty Projection Scheme is a fractional step method:
\begin{itemize}
\item A predicted velocity $\tilde{v}^{n+1}$ is first computed ($v^n \rightarrow \tilde{v}^{n+1}$):  considering the pressure gradient at the previous time step $t^n$. At the end of this step, the velocity does not respect the free divergence condition.
\item The velocity is then corrected such that $\div(v^{n+1})$ is approximately $0$ at the end of the time step.
\item We finally actualize the pressure gradient $\nabla p^{n+1}$.
\end{itemize}
For all $n \in \NN$ such that $n \delta t \leq T$ the numerical scheme reads:
\begin{equation}\label{eq:vpp_prediction}
\dfrac{\tilde{v}^{n+1}-v^n}{\delta t} + B(v^n,\tilde{v}^{n+1}) - \div( 2 \mu D(\tilde{v}^{n+1})) + \dfrac{1}{\eta} \chi_{\o(t^{n+1})}(\tilde{v}^{n+1}-v_s) + \nabla p^n = f^{n+1}
\end{equation}
\begin{equation}\label{eq:vpp_correction}
\dfrac{\e}{\delta t} \hat{v}^{n+1} - \nabla\,(\div(\hat{v}^{n+1})+\div(\tilde{v}^{n+1})) = 0
\end{equation}
\begin{equation}\label{eq:vpp_gradpression}
\nabla(p^{n+1} - p^n) + \dfrac{1}{\e} \nabla (\div(v^{n+1})) = 0
\end{equation}
Where $v^{n+1} = \tilde{v}^{n+1} + \hat{v}^{n+1}$.\\
It is completed by the following initial and boundary conditions on $\partial \O$:
\begin{equation}\label{eq:vpp_limit}
\tilde{v}^{n+1} = 0 \text{ on } \partial \O, \:\:\: \hat{v}^{n+1}.\nu= 0 \text{ on }\partial \O
\end{equation}
\begin{equation}\label{eq:vpp_initial}
\tilde{v}^0 = v_0 \text{ in } \O, \:\:\: \hat{v}^0 =0 \text{ in }\O
\end{equation}
where $\nu$ is the outward unit normal vector on $\partial \O$.

The space $\O$ is a bounded simply connected domain of $\RR^d$. 
Furthermore, $p^0 \in \LL_0^2$ the $\LL^2$-space of null average functions as well as $\div(v^{n+1})$ since $v^{n+1}.\nu = 0$. Therefore we can show recursively that $p^{n+1}$ has a null average on $\O$, for all $n \in \NN$, solving \eqref{eq:vpp_gradpression} in the space of null average functions.  We finally obtain:
\begin{equation}\label{eq:vpp_pression}
\e (p^{n+1}-p^n) + \div(v^{n+1}) = 0
\end{equation}

%% EXISTENCE DES ITERES
\begin{prpstn}[Existence of the iterates]\label{prop:existenceiteres} ~ \\
We suppose that $v^0 \in \HH^1$ and $p^0 \in \LL_0^2$.
Then , for all $n \in \NN$ such that $n \delta t \leq T$, we have:
\begin{equation*}
(v^n,p^n) \in \HH^1 \times \LL_0^2
\end{equation*}
\end{prpstn}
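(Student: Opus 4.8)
The plan is to argue by induction on $n$. The base case $n=0$ is exactly the hypothesis $v^0\in\HH^1$, $p^0\in\LL_0^2$ together with $\hat v^0=0$. Assuming $(v^n,p^n)\in\HH^1\times\LL_0^2$, I build $(v^{n+1},p^{n+1})$ in the same space by solving the three substeps \eqref{eq:vpp_prediction}, \eqref{eq:vpp_correction} and \eqref{eq:vpp_pression} successively.

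\emph{Prediction step.} With $v^n$ frozen, I read \eqref{eq:vpp_prediction} together with $\tilde v^{n+1}=0$ on $\d\O$ as a \emph{linear} elliptic problem for $\tilde v^{n+1}\in\HH^1_0$: find $u\in\HH^1_0$ such that $a(u,w)=\ell(w)$ for all $w\in\HH^1_0$, where $a(u,w)=\tfrac1{\delta t}\int_\O u\cdot w+b(v^n,u,w)+\int_\O 2\mu\,D(u):D(w)+\tfrac1\eta\int_\O\chi_{\o(t^{n+1})}u\cdot w$, $b$ denoting the trilinear form associated with $B$, and $\ell$ collecting the contributions of $f^{n+1}$, $\tfrac1{\delta t}v^n$, $-\nabla p^n$ and $\tfrac1\eta\chi_{\o(t^{n+1})}v_s$. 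Continuity of $a$ on $\HH^1_0\times\HH^1_0$ follows from the Sobolev embedding $\HH^1\hookrightarrow\LL^6$ (valid for $d\le3$), which bounds the convection term by $\|v^n\|_{\LL^6}\|\nabla u\|_{\LL^2}\|w\|_{\LL^3}$; continuity of $\ell$ uses $\nabla p^n\in\HH^{-1}$ since $p^n\in\LL^2$. Coercivity rests on the skew-symmetry $b(v^n,u,u)=0$ — here it is essential that $B$ be the Temam-modified convection operator so that this identity survives $\div v^n\neq0$ — together with Korn's inequality on $\HH^1_0$, giving $a(u,u)\ge\tfrac1{\delta t}\|u\|_{\LL^2}^2+c\,\|u\|_{\HH^1}^2$. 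Lax--Milgram then yields a unique $\tilde v^{n+1}\in\HH^1_0$.

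\emph{Correction step.} I eliminate the vector unknown in favour of the scalar $q:=\div\hat v^{n+1}+\div\tilde v^{n+1}$. Equation \eqref{eq:vpp_correction} reads $\tfrac{\e}{\delta t}\hat v^{n+1}=\nabla q$, so taking the divergence $\div\hat v^{n+1}=\tfrac{\delta t}{\e}\Delta q$, and substituting into the definition of $q$ shows that $q$ solves the Neumann problem $q-\tfrac{\delta t}{\e}\Delta q=\div\tilde v^{n+1}$ in $\O$, $\d_\nu q=0$ on $\d\O$ (the boundary condition being inherited from $\hat v^{n+1}\cdot\nu=0$). Since $\div\tilde v^{n+1}\in\LL^2$, Lax--Milgram in $H^1(\O)$ gives a unique $q\in H^1(\O)$, and then $\hat v^{n+1}:=\tfrac{\delta t}{\e}\nabla q\in\LL^2$ satisfies $\hat v^{n+1}\cdot\nu=0$ and, by the equation, $\div\hat v^{n+1}=q-\div\tilde v^{n+1}\in\LL^2$; one checks directly that it solves \eqref{eq:vpp_correction}. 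To upgrade $\hat v^{n+1}$ to $\HH^1$ I invoke elliptic regularity for the Neumann Laplacian: $\Delta q=\tfrac{\e}{\delta t}\bigl(q-\div\tilde v^{n+1}\bigr)\in\LL^2$, whence $q\in H^2(\O)$ and $\hat v^{n+1}=\tfrac{\delta t}{\e}\nabla q\in\HH^1$. Therefore $v^{n+1}=\tilde v^{n+1}+\hat v^{n+1}\in\HH^1$; moreover $\int_\O\div v^{n+1}=\int_{\d\O}v^{n+1}\cdot\nu=0$ since $\tilde v^{n+1}=0$ on $\d\O$ and $\hat v^{n+1}\cdot\nu=0$. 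Finally \eqref{eq:vpp_pression} defines $p^{n+1}=p^n-\tfrac1\e\div v^{n+1}\in\LL^2$ with $\int_\O p^{n+1}=\int_\O p^n=0$, hence $p^{n+1}\in\LL_0^2$, which closes the induction.

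\emph{Main obstacle.} The delicate point is the regularity upgrade in the correction step: the natural variational formulation of \eqref{eq:vpp_correction}--\eqref{eq:vpp_limit} only delivers $\hat v^{n+1}\in\HH(\div;\O)$, and lifting it to $\HH^1$ requires $H^2$-regularity for the Neumann problem, hence a smoothness assumption on $\d\O$ (for instance $\d\O\in C^{1,1}$, or $\O$ convex) that should be made explicit. A secondary, routine verification is that the modified convection operator $B$ indeed preserves the skew-symmetry $b(v^n,u,u)=0$ used for coercivity when $\div v^n\ne0$.
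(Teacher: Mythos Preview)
Your argument is correct, and the prediction and pressure steps match the paper's treatment (the paper simply says ``linear Stokes problem'' for the former and invokes Poincar\'e--Wirtinger for the latter). The genuine difference is in the correction step. You reduce \eqref{eq:vpp_correction} to a scalar Neumann problem for $q$, solve it variationally in $H^1$, and then invoke $H^2$ elliptic regularity to obtain $\hat v^{n+1}=\tfrac{\delta t}{\e}\nabla q\in\HH^1$; as you note, this forces an extra smoothness assumption on $\partial\O$ (e.g.\ $C^{1,1}$ or convexity) that the paper does not make. The paper instead exploits two structural facts that are already built into its hypotheses: first, $\hat v^{n+1}$ is a gradient, hence $\curl(\hat v^{n+1})=0$; second, on a simply connected domain with the boundary condition $\hat v^{n+1}\cdot\nu=0$, the $\HH^1$ norm is equivalent to $\bigl(\|\cdot\|_{\LL^2}^2+\|\div(\cdot)\|_{\LL^2}^2+\|\curl(\cdot)\|_{\LL^2}^2\bigr)^{1/2}$ (see \cite{FO78}). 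An energy estimate on \eqref{eq:vpp_correction} directly bounds $\|\hat v^{n+1}\|_{\LL^2}$ and $\|\div(\hat v^{n+1})\|_{\LL^2}$, so $\hat v^{n+1}\in\HH^1$ follows without any recourse to $H^2$ Neumann regularity. Thus the paper's route is better aligned with the stated hypothesis that $\O$ is merely simply connected, while your route trades that topological assumption for a stronger boundary-regularity one.
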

\begin{proof}\label{prop1}
Suppose that $v^n \in \HH^1$ and show $v^{n+1} \in \HH^1$. Equation \eqref{eq:vpp_prediction} is a linear Stokes problem, therefore we easily show that $\tilde{v}^{n+1}$ exists and lies in $\HH_0^1$.\\
We show now the existence of $\hat{v}^{n+1}$. For a simply connected domain, the norm $\Vert \cdot \Vert_{\HH^1}^2$ is equivalent to the norm $\Vert \div(\cdot) \Vert^2_{\LL^2} + \Vert \curl(\cdot) \Vert^2_{\LL^2}$  (see \cite{FO78}). By energy estimates, we obtain that for any $n \in \NN$ there exists a constant $C > 0$ such that:
$$
\dfrac{\e}{\delta t} \Vert \hat{v}^{n+1} \Vert^2_{\LL^2} + \dfrac{1}{2} \Vert \div(\hat{v}^{n+1}) \Vert^2_{\LL^2} \leq C
$$
Moreover, $\hat{v}^{n+1}$ is a gradient, therefore $\curl(\hat{v}^{n+1}) =0$. We deduce that $\hat{v}^{n+1}$ lies in $\HH^1$.\\
Using \eqref{eq:vpp_gradpression}, we deduce that $\nabla p^{n+1}$ lies in $ \HH^{-1}$. As $p^{n+1}$ has a null average on $\O$, we conclude using the Poincar\'e Wirtinger inequality that $p^{n+1} \in \LL_0^2$.
\end{proof}

%
%Main result
\section{Main result}
%% CONVERGENCE QUAND EPSILON ET DELTA T TENDENT VERS 0
%We give in this section the main results of the paper that are established in the following sections.

In this paper, to each sequence $(v^k)_k$ of functions defined on $\O$ we will associate a sequence of functions $(v_{\delta t})_{\delta t}$ which are the step functions in time $v_{\delta t}$ defined by:
\begin{equation}\label{eq:defconstantparmaille}
v_{\delta t}(t) = v^k \text{ if } t \in [t^k,t^{k+1}[.
\end{equation} 
For the step functions contructed from the sequences of Proposition \ref{prop:existenceiteres} we have the following convergence result.

\begin{thrm}[Convergence when $\e$ and $\delta t$ tend to $0$]\label{th:convergence_epsilon} ~ \\
Let $\O \in \RR^d$ be a simply connected bounded domain and $d=2$ or $3$. Let $\mu >0 $ and $v^0 \in \HH$ where $\HH = \left\lbrace v \in \LL^2,\: \div(v) = 0,\:v.\nu =0 \text{ on }\partial \O \right\rbrace$.\\
We assume that there exists $\lambda > 0$ such that $\e = \lambda \delta t$ and we suppose that hypothesis $\left( \mathcal{H}1 \right)$ and $\left( \mathcal{H}2 \right)$ are verified.\\
Then, up to a subsequence, $(v_{\delta t},p_{\delta t})$ defined by  \eqref{eq:vpp_prediction}-\eqref{eq:vpp_initial} and \eqref{eq:defconstantparmaille} converges towards $(v,p)$, a weak solution of the penalized Navier Stokes problem \eqref{eq:NavierStokes} when $\e = \lambda \delta t$ tends to $0$.
Furthermore, $v$ and $p$ satisfy:
$$
v \in \Linf(]0,T[;\LL^2)\cap \Ldeux(]0,T[;\HH^1)
$$
$$
p \in W^{-1,\infty}(]0,T[;\LL_0^2)
$$
Moreover, this solution is unique for $d=2$.
%% Je definirai dans la réécriture du théorème les v delta t et p delta t
\end{thrm}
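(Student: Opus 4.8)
The plan is to follow the classical compactness strategy for convergence of time-discretized Navier--Stokes schemes, adapted to the vector penalty projection structure. First I would establish the a priori estimates that make the scheme stable: test the prediction step \eqref{eq:vpp_prediction} with $\tilde v^{n+1}$ (or rather with $v^{n+1}$, after combining with the correction and pressure-update steps), use the identity $v^{n+1}=\tilde v^{n+1}+\hat v^{n+1}$ and the relation \eqref{eq:vpp_pression} to turn the pressure term into a telescoping contribution $\tfrac{\e}{2}(\|p^{n+1}\|^2-\|p^n\|^2)$ plus a dissipative term. The convective term $B(v^n,\tilde v^{n+1})$ should be chosen (or symmetrized) so that it contributes nonnegatively or vanishes when tested against $\tilde v^{n+1}$; since $\psi=v_s$ is divergence free the penalization term $\tfrac1\eta\chi_{\o(t^{n+1})}(\tilde v^{n+1}-v_s)$ is handled by writing $\tilde v^{n+1}-v_s=(\tilde v^{n+1}-\psi)+(\psi-v_s)$ and absorbing the bad part using $(\mathcal H1)$. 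Summing over $n$ and using a discrete Gronwall lemma yields, uniformly in $\e=\lambda\delta t$ and $\eta$:
$$
\|v_{\delta t}\|_{\Linf(\Ldeux)} + \|v_{\delta t}\|_{\Ldeux(\HH^1)} + \sqrt{\e}\,\|p_{\delta t}\|_{\Linf(\Ldeux)} + \tfrac{1}{\sqrt\eta}\|\chi_{\o}(v_{\delta t}-v_s)\|_{\Ldeux(\Ldeux)} \le C,
$$
together with $\tfrac{1}{\sqrt\e}\|\div v_{\delta t}\|_{\Ldeux(\Ldeux)}\le C$ and $\sqrt{\e/\delta t}\,\|\hat v_{\delta t}\|_{\Ldeux(\Ldeux)}\le C$, so that $\|\div v_{\delta t}\|_{\Ldeux(\Ldeux)}=O(\sqrt{\delta t})\to 0$ and $\|\hat v_{\delta t}\|_{\Ldeux(\Ldeux)}\to 0$.

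Next I would extract weak limits: up to a subsequence $v_{\delta t}\cvfaible v$ in $\Ldeux(\HH^1)$ and weak-$*$ in $\Linf(\Ldeux)$, and $\tilde v_{\delta t}$ has the same limit since $\|\tilde v_{\delta t}-v_{\delta t}\|=\|\hat v_{\delta t}\|\to 0$. For the pressure, note that $\nabla p_{\delta t}$ is controlled in $\HH^{-1}$ only after one time integration: from \eqref{eq:vpp_gradpression}, $\nabla(p^{n+1}-p^n)=-\tfrac1\e\nabla\div v^{n+1}$, so the partial sums $\nabla p^n=\nabla p^0-\tfrac1\e\sum_{k\le n}\nabla\div v^k$; using the momentum equation to identify $\nabla p^n$ as (data) minus $\d_t$-difference quotient minus viscous and convective and penalty terms, one sees that the time-primitive of $p_{\delta t}$ is bounded in $L^\infty$ with values in $\LL^2_0$, giving $p_{\delta t}$ bounded in $W^{-1,\infty}(]0,T[;\LL^2_0)$; extract $p_{\delta t}\cvfaible p$ there. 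To pass to the limit in the nonlinear term I need strong convergence of $v_{\delta t}$ in $\Ldeux(\Ldeux)$: this comes from an Aubin--Lions--Simon argument, deriving a fractional-in-time estimate (a bound on translates $\|v_{\delta t}(\cdot+h)-v_{\delta t}(\cdot)\|$ in a negative Sobolev space in space) directly from the scheme by summing the momentum equation against test functions, combined with the $\HH^1$ bound in space.

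Then I would pass to the limit in the weak formulation: for a test function $\varphi\in C_c^\infty([0,T[;\mathcal V)$ (divergence-free, vanishing on $\d\O$), multiply \eqref{eq:vpp_prediction} by $\varphi(t^{n+1})$, sum in $n$, perform discrete integration by parts in time to move the difference quotient onto $\varphi$ (producing the $-\int v\cdot\d_t\varphi - (v_0,\varphi(0))$ term), use strong $\Ldeux$ convergence for $\int (v_{\delta t}\cdot\nabla)v_{\delta t}\cdot\varphi$, weak convergence for the viscous term, and for the penalty term use that $\chi_{\o(t^{n+1})}\to\chi_{\o(t)}$ in a suitable sense and that $\chi_{\o}v_{\delta t}\to\chi_{\o}v$ — the pressure gradient $\nabla p^n$ tested against divergence-free $\varphi$ drops out at the discrete level, and one recovers $\nabla p$ in the limiting equation by a de Rham / inf-sup argument once the divergence-free constraint $\div v=0$ (limit of $\div v_{\delta t}\to 0$ strongly) is established. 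Finally, uniqueness for $d=2$ follows from the standard Lions--Prodi argument: if $(v_1,p_1)$ and $(v_2,p_2)$ are two weak solutions, the difference $w=v_1-v_2$ satisfies an energy identity in which the only troublesome term is $\int (w\cdot\nabla)v_2\cdot w$, controlled in 2D by $\|w\|_{\Ldeux}\|w\|_{\HH^1}\|v_2\|_{\HH^1}$ via Ladyzhenskaya's inequality and absorbed into the viscous dissipation, the penalization term $\tfrac1\eta\int\chi_\o|w|^2\ge 0$ helping rather than hurting, so Gronwall gives $w\equiv 0$.

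The main obstacle I expect is the pressure: unlike Chorin--Temam schemes there is no Poisson equation pinning down $p^{n+1}$ cleanly, so obtaining a uniform bound (even the weak $W^{-1,\infty}$ one) requires carefully exploiting the coupling \eqref{eq:vpp_pression}--\eqref{eq:vpp_gradpression} together with the momentum equation, and controlling the factor $1/\e=1/(\lambda\delta t)$ against the $O(\sqrt{\delta t})$ smallness of $\div v_{\delta t}$ — this is exactly where the proportionality constraint $\e=\lambda\delta t$ is essential. A secondary difficulty is ensuring the convective discretization $B(v^n,\tilde v^{n+1})$ is skew-symmetric enough to give the clean energy estimate while still being consistent in the limit, and handling the time-dependence of $\chi_{\o(t)}$ in the penalty term when passing to the limit.
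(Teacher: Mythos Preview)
Your proposal is essentially correct and follows the same route as the paper: stability via energy estimates on the lifted variable $w=v-\psi$ (this is exactly what your decomposition $\tilde v^{n+1}-v_s=(\tilde v^{n+1}-\psi)+(\psi-v_s)$ amounts to, since $\psi=v_s$ on $\o(t)$ makes the penalty term sign-definite), a discrete Gronwall argument, weak limits from boundedness, strong $\Ldeux(\LL^2)$ convergence via a Nikolskii/Aubin--Lions--Simon compactness step, passage to the limit against divergence-free test functions, and recovery of the pressure by De~Rham on the time-integrated equation. The paper organizes the energy estimate by testing four separate relations (prediction with $\tilde w^{n+1}$, corrector identity with $w^{n+1}$ and with $\nabla p^{n+1}$, pressure update with $p^{n+1}$) and summing, rather than testing with $v^{n+1}$ directly, but the output is the same set of bounds.

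One small slip to flag: from $\sqrt{\e/\delta t}\,\|\hat v_{\delta t}\|_{\Ldeux(\Ldeux)}\le C$ and $\e=\lambda\delta t$ you \emph{cannot} conclude $\|\hat v_{\delta t}\|_{\Ldeux(\Ldeux)}\to 0$, since the prefactor is the fixed constant $\sqrt{\lambda}$. The paper instead obtains $\|\hat v_{\delta t}\|_{\Ldeux(\HH^{-1})}\le C\,\delta t/\sqrt{\e}=O(\sqrt{\delta t})\to 0$ directly from the correction equation, and this weaker convergence combined with the $\Ldeux(\HH^1)$ bound is what forces $\tilde v=v$ at the limit. This does not affect your overall strategy.
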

The proof of this theorem is given in section \ref{sec:convergence_epsilon}.
%% CONVERGENCE QUAND ETA TEND VERS 0
We complete this last result by giving a new proof of existence of weak solutions to the Navier Stokes equation in presence of a moving body.\\
To do so, we focus on the behavior of the solution when $\eta$ tends to $0$ and denote $v_{\eta}$ the weak limit obtained in Theorem \ref{th:convergence_epsilon}.
\begin{thrm}[Convergence when $\eta$ tends to $0$]\label{th:convergence_eta} ~ \\
When $\eta$ tends to $0$, the sequence $\left( v_{\eta} \right)_{\eta}$ weakly converges towards a limit $v$ which satisfies
\begin{equation*}
v_{|\partial \o(t)} = v_s(t)
\end{equation*}
Furthermore, $v_{|\O \setminus \o(t)}$ is a weak solution of the Navier Stokes equations on $\O \setminus \o(t)$.\\
Moreover, there exists $h \in \mathbf{W}'$ such that:
\begin{equation}
\dfrac{1}{\eta} \chi_{\o(t)}(v-v_s) \cvfaible h \text{ in } \mathbf{W}'
\end{equation}
Where $\mathbf{W}$ is defined in section \ref{sec:notations}.
\end{thrm}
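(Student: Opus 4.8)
The plan is to pass to the limit $\eta \to 0$ in the weak formulation of the penalized Navier--Stokes problem \eqref{eq:NavierStokes}, using the family $(v_\eta, p_\eta)$ produced by Theorem \ref{th:convergence_epsilon}. The first step is to obtain $\eta$-uniform a priori bounds. Testing the momentum equation against $v_\eta - \psi$ (where $\psi$ is the divergence-free extension of $v_s$ from hypothesis $(\mathcal H1)$), the penalization term contributes $\frac{1}{\eta}\int \chi_{\o(t)}|v_\eta - v_s|^2 \geq 0$, which is favourable; the remaining terms are controlled by the regularity of $\psi$ and the $L^2$-bound on $f$. This yields, uniformly in $\eta$:
\begin{equation*}
v_\eta \text{ bounded in } \Linf(]0,T[;\LL^2)\cap \Ldeux(]0,T[;\HH^1), \qquad \frac{1}{\sqrt\eta}\,\chi_{\o(t)}(v_\eta - v_s) \text{ bounded in } \Ldeux(]0,T[;\LL^2).
\end{equation*}
The last bound forces $\chi_{\o(t)}(v_\eta - v_s) \to 0$ strongly in $\Ldeux(]0,T[;\LL^2)$, so the weak limit $v$ satisfies $v = v_s$ a.e. on $\bigcup_t \o(t)$, hence in particular $v_{|\partial\o(t)} = v_s(t)$ after using trace continuity (this requires interpreting the restriction in the sense of traces from the solid side, with the $\HH^1$ regularity of $v_s$ from $(\mathcal H1)$ and of $v_\eta$ giving a trace in $\HH^{1/2}(\partial\o(t))$).

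Next I would establish a uniform bound on the time derivative so as to apply an Aubin--Lions compactness argument. From the momentum equation, $\partial_t v_\eta$ is bounded in $\Ldeux(]0,T[;\mathbf W')$ provided $\mathbf W$ is chosen so that the test space ``kills'' the pressure (i.e. $\mathbf W \subset \HH^1_0$ divergence-free, or more precisely the space against which the penalization term and the convection term are continuous in the relevant topology); this is presumably the role of the space $\mathbf W$ defined in section \ref{sec:notations}. Once $\partial_t v_\eta$ is bounded in a dual space, Aubin--Lions gives $v_\eta \to v$ strongly in $\Ldeux(]0,T[;\LL^2)$ (and a.e.), which is exactly what is needed to pass to the limit in the nonlinear term $(v_\eta\cdot\nabla)v_\eta$. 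The viscous and time-derivative terms pass to the limit by weak convergence. For the penalization term, since $\frac{1}{\eta}\chi_{\o(t)}(v_\eta - v_s)$ is only bounded in $\mathbf W'$ (not better), I would extract a weakly-$*$ convergent subsequence with limit $h \in \mathbf W'$; this is the origin of the function $h$ in the statement. Testing the limit equation against $\varphi \in \mathbf W$ supported in $\O \setminus \o(t)$ makes $h$ disappear (since $\chi_{\o(t)}\varphi = 0$ for such $\varphi$, and $h$ inherits this support property in the limit), and one recovers exactly the weak Navier--Stokes equations on $\O\setminus\o(t)$ with the no-slip/no-penetration data $v = v_s$ on $\partial\o(t)$.

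The main obstacle I anticipate is twofold. First, making rigorous the claim that $h$ is ``supported in $\overline{\o(t)}$'' so that it drops out of the exterior equation: one must check that for every test function vanishing on the solid, the pairing $\langle h, \varphi\rangle$ is zero, which follows from $\langle \frac{1}{\eta}\chi_{\o(t)}(v_\eta - v_s), \varphi\rangle = 0$ for all $\eta$ — but care is needed because $\o(t)$ moves in time, so the test space must be handled with space-time test functions and the characteristic function $\chi_{\o(t)}$ must be dealt with measurably in $t$. Second, the convection term near the interface: establishing strong $\LL^2$ convergence of $v_\eta$ up to the moving boundary requires the Aubin--Lions argument to work on the whole of $\O$, which in turn needs the $\mathbf W'$-bound on $\partial_t v_\eta$ to be genuinely uniform in $\eta$ despite the singular $1/\eta$ factor — this is why one tests against functions in $\mathbf W$ and not in $\HH^1_0$, so that the penalization pairing is $\langle \frac{1}{\eta}\chi_{\o(t)}(v_\eta-v_s),\varphi\rangle$, bounded by $\frac{1}{\sqrt\eta}\|\chi_{\o(t)}(v_\eta-v_s)\|_{\LL^2}\cdot\frac{1}{\sqrt\eta}\|\chi_{\o(t)}\varphi\|_{\LL^2}$, and the second factor must be absorbed using a Poincaré-type inequality valid for $\varphi\in\mathbf W$ on the thin solid region — getting this estimate right is the technical heart of the proof. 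Finally one notes that no claim of uniqueness is made at this level, consistent with the classical status of weak solutions to Navier--Stokes in $3$D.
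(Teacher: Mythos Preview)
Your overall strategy --- uniform energy bounds, strong convergence of $\chi_{\o(t)}(v_\eta-v_s)$ to zero in $L^2(]0,T[;\LL^2)$, recovery of the trace $v=v_s$ on $\partial\o(t)$, extraction of a weak limit $h$ for the penalization, and passage to the limit against test functions supported in the fluid --- is correct and close to the paper's. The trace step is essentially the paper's Lemma~\ref{lem:estim_bord}.

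The gap is in your mechanism for the $\mathbf W'$-bound on $\tfrac{1}{\eta}\chi_{\o(t)}(v_\eta-v_s)$, which rests on a misreading of $\mathbf W$. In the paper $\mathbf W=\{\varphi\in L^2(]0,T[;\mathbf V):\partial_t\varphi\in L^2(]0,T[;\mathbf V'),\ \varphi(T)=0\}$ is a \emph{space-time} test class whose only special features are divergence-freeness (to remove the pressure) and enough time regularity together with $\varphi(T)=0$ (to integrate by parts in time). It imposes nothing on $\o(t)$, and $\o(t)$ does not shrink with $\eta$; there is no Poincar\'e-type inequality of the form $\tfrac{1}{\sqrt\eta}\Vert\chi_{\o(t)}\varphi\Vert_{\LL^2}\leq C\Vert\varphi\Vert_{\mathbf W}$, and the stray factor $1/\sqrt\eta$ cannot be absorbed by your route. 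The paper's argument is instead indirect: test against $\varphi\in\mathbf W$, move the time derivative onto $\varphi$, and isolate the penalization as
\[
\int_0^T\!\left(\tfrac{1}{\eta}\chi_{\o(t)}(v_\eta-v_s),\varphi\right)\mathrm dt
=\int_0^T\!\langle v_\eta,\partial_t\varphi\rangle\,\mathrm dt+(v_0,\varphi(0))-\int_0^T\!\Big[2\mu(D(v_\eta),D(\varphi))+((v_\eta\cdot\nabla)v_\eta,\varphi)-(f,\varphi)\Big]\mathrm dt.
\]
Every term on the right is bounded by $C\Vert\varphi\Vert_{\mathbf W}$ uniformly in $\eta$, using only the energy bounds $v_\eta\in L^\infty(\LL^2)\cap L^2(\HH^1)$ (which are $\eta$-uniform because the penalization enters the energy identity with a favourable sign). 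This yields the $\mathbf W'$-bound without ever estimating the penalization as a product.

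A related consequence concerns your compactness step: $\partial_t v_\eta$ contains the $1/\eta$ term and is \emph{not} uniformly bounded in $L^1(]0,T[;\mathbf V')$ or any similar instantaneous dual space, so a direct Aubin--Lions argument does not go through. The strong $L^2(]0,T[;\LL^2)$ convergence of $v_\eta$ needed for the nonlinear term has to be inherited from the $\eta$-uniform stability and time-translation estimates of Section~\ref{sec:stability} (the constants there do not depend on $\eta$), which pass to the limit $\delta t\to0$ and then give compactness of $(v_\eta)_\eta$ via the Nikolskii-space route (Lemma~\ref{lem:kolmogorov} and Theorem~\ref{th:simon}) rather than via a bound on $\partial_t v_\eta$.
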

The proof of this result is given in Section \ref{sec:convergence_eta}.\\
%% STABILITE
Before proving these theorems, we first prove the stability of the numerical scheme.\\
In order to perform energy estimates, we need to build a lifting of the velocity. In the following to the function $v$ (resp.  ${v}^{n}$, $\tilde{v}^{n}$, $\hat{v}^{n}$,  ${v}_{\delta t}$) we will associate the lifting functions $w$ (resp.  ${w}^{n}$, $\tilde{w}^{n}$, $\hat{w}^{n}$,  ${w}_{\delta t}$) defined by substraction of the function $\psi$:
\begin{equation}\label{eq:def_lifting}
w=v-\psi, \quad {w}^{n}={v}^{n}-\psi , \quad  \tilde{w}^{n}= \tilde{v}^{n}-\psi , \quad \hat{w}^{n}= \hat{v}^{n}, \quad  {w}_{\delta t}={v}_{\delta t} -\psi .
\end{equation}
The system \eqref{eq:vpp_prediction}-\eqref{eq:vpp_initial} becomes:
\begin{equation}\label{eq:vpp_lifting_prediction}
\begin{split}
\dfrac{\tilde{w}^{n+1}-w^n}{\delta t} &+ B(w^n,\tilde{w}^{n+1})+B(\psi^n,\tilde{w}^{n+1})\\
& - \div(2 \mu D(\tilde{w}^{n+1})) + \dfrac{1}{\eta} \chi_{\o(t^{n+1})} \tilde{w}^{n+1} + \nabla p^n = F^{n+1} - B(w^n,\psi^{n+1})
\end{split}
\end{equation}
\begin{equation}\label{eq:vpp_lifting_correction}
\dfrac{\hat{w}^{n+1}}{\delta t} - \dfrac{1}{\e} \nabla (\div(\tilde{w}^{n+1} + \hat{w}^{n+1})) = 0
\end{equation}
\begin{equation}\label{eq:vpp_lifting_gradpression}
\nabla(p^{n+1}-p^n) + \dfrac{1}{\e} \nabla (\div(w^{n+1}))=0
\end{equation}
\begin{equation}\label{eq:vpp_lifting_initial}
w^0(x) = v^0 - \psi^0
\end{equation}
Where $F^{n+1} = f^{n+1}- \dfrac{\psi^{n+1}-\psi^n}{\delta t} + \div(2 \mu D(\psi^{n+1})) - B(\psi^n,\psi^{n+1})$.\\
\begin{prpstn}[Stability]\label{prop:stability} ~ \\
For $\mu>0$, $v^0 \in \LL^2$, $p^0 \in \LL_0^2$, $\nabla p^0 \in \LL^2$ and $f \in L^2(]0,T[;\LL^2)$ given, if we assume that $\psi$ verifies the hypothesis $\left( \mathcal{H}1\right)$.\\
Then, for any $T>0$ there exists $C > 0$ such that
 \begin{enumerate}[i)]
\item $w_{\de t}$ is bounded in $\Linf(]0,T[;\LL^2)$.
\item  $\tilde{w}_{\de t}(\cdot + \delta t) - w_{\de t}(\cdot)$ is bounded in $L^{\infty}(]0,T-\delta t[; \LL^2)$ and $$\Vert \tilde{w_{\de t}}(\cdot + \delta t) - w_{\de t}(\cdot) \Vert_{L^{\infty}(]0,T-\delta t[;\LL^2)} \leq C \sqrt{\de t}$$
\item  $\nabla \tilde{w}_{\de t}$ is bounded in $L^2(]0,T[;\LL^2)$.
 \end{enumerate}
\end{prpstn}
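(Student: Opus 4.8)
The plan is to run a discrete energy estimate on the lifted system \eqref{eq:vpp_lifting_prediction}--\eqref{eq:vpp_lifting_initial}, for which the natural energy is built on $w^n$ rather than $v^n$. Three structural facts are used throughout. (a) With the skew-symmetrized trilinear form one has $(B(u,z),z)_{\LL^2}=0$ whenever $u\cdot\nu=0$ on $\partial\O$, so the convective contributions of $\tilde w^{n+1}$ tested against itself cancel. (b) From \eqref{eq:vpp_lifting_correction}--\eqref{eq:vpp_pression}, $\hat w^{n+1}=v^{n+1}-\tilde v^{n+1}$ is a gradient, $\hat w^{n+1}=-\delta t\,\nabla(p^{n+1}-p^n)$, and $\div(w^{n+1})=-\e(p^{n+1}-p^n)$. (c) Combining (b) with $w^{n+1}=\tilde w^{n+1}+\hat w^{n+1}$ and one integration by parts (no boundary term, since $\hat w^{n+1}\cdot\nu=0$ on $\partial\O$) yields the Pythagorean‑type identity
\[
\|\tilde w^{n+1}\|_{\LL^2}^2=\|w^{n+1}\|_{\LL^2}^2+\tfrac{2\delta t}{\e}\,\|\div(w^{n+1})\|_{\LL^2}^2+\|\hat w^{n+1}\|_{\LL^2}^2,
\]
which is what lets one recover the physical energy from the predicted field. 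I would establish (a)--(c) together with $2(a-b,a)=|a|^2-|b|^2+|a-b|^2$ first, and I choose the lift $\psi$ so that $\psi\cdot\nu=0$ on $\partial\O$ (compatible with $(\mathcal{H}1)$ since the body stays away from $\partial\O$), so that the pressure integration by parts below produces no boundary term.

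For items (i) and (iii): test \eqref{eq:vpp_lifting_prediction} with $2\delta t\,\tilde w^{n+1}$. The discrete‑time‑derivative term yields $\|\tilde w^{n+1}\|^2-\|w^n\|^2+\|\tilde w^{n+1}-w^n\|^2$, the convective terms in $\tilde w^{n+1}$ drop by (a), the viscous and space‑penalty terms give $4\mu\delta t\,\|D(\tilde w^{n+1})\|^2$ and $\tfrac{2\delta t}{\eta}\,\|\tilde w^{n+1}\|^2_{\LL^2(\o(t^{n+1}))}$, and the pressure term, once integrated by parts and rewritten via (b), collapses into telescoping pieces together with $\tfrac{\delta t}{\e}\|\div(w^{n+1})\|^2$ and $\|\hat w^{n+1}\|^2$; inserting (c) produces the clean per‑step balance
\[
\begin{aligned}
&\|w^{n+1}\|^2-\|w^{n}\|^2+\|\tilde w^{n+1}-w^{n}\|^2+4\mu\delta t\,\|D(\tilde w^{n+1})\|^2+\tfrac{2\delta t}{\eta}\,\|\tilde w^{n+1}\|^2_{\LL^2(\o(t^{n+1}))}\\
&\qquad+\tfrac{\delta t}{\e}\,\|\div(w^{n+1})\|^2+\delta t\,\e\bigl(\|p^{n+1}\|^2-\|p^{n}\|^2\bigr)+\delta t^2\bigl(\|\nabla p^{n+1}\|^2-\|\nabla p^{n}\|^2\bigr)\\
&\qquad=2\delta t\,\bigl(F^{n+1}-B(w^n,\psi^{n+1}),\,\tilde w^{n+1}\bigr),
\end{aligned}
\]
with all unlabelled norms and products in $\LL^2$. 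On the right I would use Cauchy--Schwarz and Young together with Korn's and Poincaré's inequalities on $\tilde v^{n+1}\in\HH_0^1$ and the boundedness of $\psi$ from $(\mathcal{H}1)$, so that $\|\tilde w^{n+1}\|_{\LL^2}+\|\nabla\tilde w^{n+1}\|_{\LL^2}\le C\bigl(1+\|D(\tilde w^{n+1})\|_{\LL^2}\bigr)$, and estimate $B(w^n,\psi^{n+1})$ by $C\|w^n\|_{\LL^2}\bigl(1+\|D(\tilde w^{n+1})\|_{\LL^2}\bigr)$ (using $\psi,\nabla\psi\in L^\infty(L^\infty)$ and one integration by parts on its $\div(w^n)$ part); this absorbs $2\mu\delta t\,\|D(\tilde w^{n+1})\|^2$ on the left and leaves $\le C\delta t\,\bigl(1+\|F^{n+1}\|_{\LL^2}^2+\|w^n\|_{\LL^2}^2\bigr)$. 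Summing over $n$, using $\sum_n\delta t\,\|F^{n+1}\|_{\LL^2}^2\le C$ (this is where $f\in L^2(0,T;\LL^2)$ and $(\mathcal{H}1)$ enter, through interpolants of $f,\partial_t\psi,\psi,B(\psi,\psi)$) and $\delta t\,\e\|p^0\|^2+\delta t^2\|\nabla p^0\|^2\le C$ (where $\nabla p^0\in\LL^2$ is used), and invoking the discrete Gronwall lemma, one gets $\sup_N\|w^N\|_{\LL^2}^2\le C$, which is (i); feeding this back, the remaining nonnegative left‑hand terms are bounded, in particular $\sum_n\delta t\,\|D(\tilde w^{n+1})\|_{\LL^2}^2\le C$, hence $\sum_n\delta t\,\|\nabla\tilde w^{n+1}\|_{\LL^2}^2\le C$ by Korn, which is (iii), and also $\sum_n\|\tilde w^{n+1}-w^n\|_{\LL^2}^2\le C$.

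For (ii), the last cumulative bound already gives boundedness of $\tilde w_{\delta t}(\cdot+\delta t)-w_{\delta t}$ in $\Linf(0,T-\delta t;\LL^2)$ and, a fortiori, $\|\tilde w_{\delta t}(\cdot+\delta t)-w_{\delta t}\|_{L^2(0,T-\delta t;\LL^2)}\le C\sqrt{\delta t}$. To reach the uniform $\sqrt{\delta t}$ rate I would test the increment equation \eqref{eq:vpp_lifting_prediction} with $\delta t\,(\tilde w^{n+1}-w^n)$, so that the left‑hand side is exactly $\|\tilde w^{n+1}-w^n\|_{\LL^2}^2$, and bound the right‑hand side term by term with the a priori bounds of (i) and (iii): the forcing term is $\le\delta t\|F^{n+1}\|_{\LL^2}\|\tilde w^{n+1}-w^n\|_{\LL^2}\le C\sqrt{\delta t}\,\|\tilde w^{n+1}-w^n\|_{\LL^2}$ since $\delta t\|F^{n+1}\|_{\LL^2}^2\le C$; the divergence‑penalty, space‑penalty and viscous contributions are grouped with the weighted quantities $\tfrac{\delta t}{\e}\|\div(w^{n+1})\|^2$, $\tfrac{\delta t}{\eta}\|\tilde w^{n+1}\|^2_{\LL^2(\o(t^{n+1}))}$ and $\delta t\|D(\tilde w^{n+1})\|^2$ that were just controlled; and the convective term is treated by skew‑symmetry. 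Absorbing the $\|\tilde w^{n+1}-w^n\|_{\LL^2}^2$ pieces then produces $\|\tilde w^{n+1}-w^n\|_{\LL^2}^2\le C\delta t$.

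I expect the main obstacle to be twofold. First, the reorganization of the pressure term in the energy estimate — showing that $2\delta t\,(\nabla p^n,\tilde w^{n+1})_{\LL^2}$, handled jointly with the correction \eqref{eq:vpp_lifting_correction} and the pressure update \eqref{eq:vpp_lifting_gradpression}--\eqref{eq:vpp_pression}, reassembles into a genuinely telescoping quantity — is the heart of the vector penalty‑projection mechanism and is sensitive to the order of the integrations by parts; the identities (b)--(c) are what make it work. Second, the sharp $\sqrt{\delta t}$ rate in (ii) is delicate: controlling the convective and pressure‑gradient contributions to $\tilde w^{n+1}-w^n$ with only $\LL^2$ and $L^2$‑in‑time $\HH^1$ information on $w^n$ and $\tilde w^{n+1}$ forces one to weigh each estimate against the a priori weighted bounds rather than against pointwise regularity, and the $B(w^n,\psi^{n+1})$ term must be reintroduced into the Gronwall argument with care.
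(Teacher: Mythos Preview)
Your treatment of (i) and (iii) is correct and is essentially the paper's argument in a more compact package. The paper tests \eqref{eq:vpp_lifting_prediction} with $\tilde w^{n+1}$, the relation $\frac{w^{n+1}-\tilde w^{n+1}}{\delta t}+\nabla(p^{n+1}-p^n)=0$ with $w^{n+1}$ and with $\nabla p^{n+1}$, and \eqref{eq:vpp_pression} with $p^{n+1}$, then sums and absorbs one cross term by Young. Your Pythagorean identity (c) is exactly what those last three computations amount to, so your displayed per-step balance coincides with the paper's (and is even slightly sharper, since you avoid the Young loss). The Gronwall step and the feedback into the dissipative terms are identical.

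The gap is in your extra argument for the $L^\infty$ rate in (ii). After testing \eqref{eq:vpp_lifting_prediction} with $\delta t(\tilde w^{n+1}-w^n)$ you must control, pointwise in $n$, the viscous cross term $2\mu\,\delta t\,(D(\tilde w^{n+1}),D(w^n))$, the pressure term $\delta t\,(\nabla p^n,\tilde w^{n+1}-w^n)$, and the convective term $\delta t\,b(w^n,\tilde w^{n+1},\tilde w^{n+1}-w^n)=\delta t\,b(w^n,w^n,\tilde w^{n+1})$. From the energy estimate you only know that $\delta t\,\|D(\tilde w^{n+1})\|^2$, $\delta t\,\|\nabla w^{n}\|^2$ and $\delta t^2\|\nabla p^n\|^2$ are each bounded by $C$ (as single terms of bounded sums), which makes these contributions $O(1)$, not $O(\delta t)$. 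Hence your increment test yields only $\|\tilde w^{n+1}-w^n\|_{\LL^2}\le C$, i.e.\ the $L^\infty$ boundedness you already had from $\sum_k\|\tilde w^{k+1}-w^k\|_{\LL^2}^2\le C$, and not the rate $C\sqrt{\delta t}$. For what it is worth, the paper itself stops at the cumulative bound and offers no separate argument for the $\sqrt{\delta t}$ rate in $L^\infty$; only that cumulative bound (equivalently, the $L^2$-in-time rate you mention) is actually used in the sequel.
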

The proof of this proposition can be found in section \ref{sec:stability}.

%
%Mathematical recalls
\section{Mathematical recalls}
\subsection{Notations}\label{sec:notations}
Let us define the main notations of this paper.
\begin{itemize}
\item For $p>0$, $\LL^p = L^p(\O)$.
\item For $p>0$, $\LL_0^p = \left\lbrace v \in \LL^p ;\, \integre{\O}{}{v}{x} =0 \right\rbrace $.
\item For $p \in \RR$, $\HH^p = H^p(\O)$, the classical Sobolev space.
\item For $p>0$, $\HH_0^p  = \left\lbrace v \in \HH^p;\, v_{| \partial \O} = 0 \right\rbrace$.
\item $\HH = \left\lbrace v \in \LL^2 ;\, \div(v) =0 \text{ on } \O;\, \gamma_{\nu}(v) = 0 \right\rbrace$.
\item $\mathbf{V} = \left\lbrace v \in \HH_0^1 ;\, \div(v) =0 \text{ on }\O \right\rbrace$.
\item $\mathbf{W} = \left\lbrace v \in L^2(]0,T[;\mathbf{V});\, \dfrac{\partial v}{\partial t} \in L^2(]0,T[;\mathbf{V}');\,v(T)=0 \right\rbrace$.
\end{itemize}
Where $\gamma_{\nu}$ is the trace operator. \\
We recall the definition of the Nikolskii spaces.
\begin{dfntn}\label{def:nikolskii}
Let $E$ be a Banach space and $f \in L^1(]0,T[;E)$.\\
For $1 \leq q < \infty$ and $0 < \sigma < 1$, we define the Nikolskii space $N_q^{\sigma}(]0,T[;E)$ by
$$
N_q^{\sigma} = \left\lbrace f \in L^q(]0,T[;E),\: \sup\limits_{0<h<T} \dfrac{\Vert \tau_h f -f \Vert_{L^q(]0,T-h[;E)}}{h^{\sigma}} < \infty \right\rbrace
$$
and for $f \in N_q^{\sigma}(]0,T[;E)$ we define the associated norm
\begin{equation*}
\Vert f \Vert_{N_q^{\sigma}(]0,T[;E)} = \left\lbrace \Vert f \Vert^q_{L^q(]0,T[;E)} + \sup\limits_{0<h<T} \left( \dfrac{1}{h^{\sigma}} \Vert \tau_h f - f \Vert_{L^q(]0,T-h[;E)} \right)^q \right\rbrace^{\frac 1 q}
\end{equation*}
Where $\tau_h$ is the translation operator defined by $\tau_h f (\cdot) = f(\cdot+h)$
\end{dfntn}
We denote $(\, \cdot\, ,\,\cdot \,)_{\LL^2}$ the usual scalar product on $\LL^2$ and $<\, \cdot\, ,\,\cdot \,>_{E',E}$ the duality bracket.\\
In all the paper the constants are always denoted $C$. 
\subsection{Mathematical properties}
%% RAPPELS MATHEMATIQUES
To deal with the nonlinear convective term, we use the bilinear form $B$ introduced by Temam (see \cite{TE68} and \cite{TE84}).\\
For $u \in \HH^1$ and $v \in \HH_0^1$,
\begin{equation}\label{eq:formeB}
B(u,v) = (u . \nabla) v + \dfrac{1}{2} \div(u) v
\end{equation}
Taking the scalar product of $B(u,v)$ by $w \in \HH_0^1$ and integrating by part the second term, we obtain the associated trilinear form $b$: 
\begin{equation}\label{eq:formetrib}
b(u,v,w) = \dfrac{1}{2}\integre{\O}{}{(u . \nabla)v.w}{x}  - \dfrac{1}{2} \integre{\O}{}{(u . \nabla) w . v}{x} 
\end{equation}
The trilinear form $b$ satisfies the antisymmetry property $b(u,v,w) =- b(u,w,v)$ and $b(u,v,v) =0$. 
We now recall the discrete Gronwall Lemma (see \cite{HO09}, \cite{GI07}, \cite{QU14}).
\begin{lmm}[Discrete Gronwall lemma \cite{HO09}]\label{lem:gronwall_discrete} ~ \\
Let $(y_n)$, $(f_n)$ and $(g_n)$ three non-negative sequences such that:
$$
y_n \leq f_n + \sum\limits_{k=0}^{n-1}{g_k \, y_k} \text{ for } n \geq 0
$$
Then,
$$
y_n \leq f_n + \sum\limits_{k=0}^{n-1}{f_k g_k \exp \left( \sum\limits_{ j =k+1}^ {n-1}{g_j}\right)} \text{ for } n \geq 0
$$
\end{lmm}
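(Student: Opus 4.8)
The plan is to reduce the hypothesis to a first-order linear recursion on the partial sums and then unroll it explicitly. First I would set $S_n = \sum_{k=0}^{n-1} g_k y_k$, so that $S_0 = 0$ (empty sum) and the assumption reads $y_n \le f_n + S_n$ for all $n \ge 0$. Since the three sequences are non-negative, $S_{n+1} - S_n = g_n y_n \le g_n (f_n + S_n)$, which gives the one-step estimate
$$
S_{n+1} \le (1+g_n)\,S_n + g_n f_n, \qquad n \ge 0 .
$$

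Next I would establish by induction on $n$ the closed form
$$
S_n \le \sum_{k=0}^{n-1} g_k f_k \prod_{j=k+1}^{n-1} (1+g_j),
$$
with the usual conventions that an empty sum is $0$ and an empty product is $1$. The case $n=0$ is trivial since $S_0 = 0$; for the inductive step one substitutes the induction hypothesis into $S_{n+1} \le (1+g_n) S_n + g_n f_n$, distributes the factor $(1+g_n)$ so that each product now runs up to the index $n$, and identifies the remaining term $g_n f_n$ as the $k=n$ summand (whose product is empty, hence equal to $1$).

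Finally, I would combine $y_n \le f_n + S_n$ with this closed form and use the elementary inequality $1 + x \le e^x$, valid for $x \ge 0$, on each factor $1 + g_j$, so as to replace $\prod_{j=k+1}^{n-1}(1+g_j)$ by $\exp\left(\sum_{j=k+1}^{n-1} g_j\right)$. This yields exactly
$$
y_n \le f_n + \sum_{k=0}^{n-1} f_k g_k \exp\left(\sum_{j=k+1}^{n-1} g_j\right), \qquad n \ge 0,
$$
which is the claimed bound. There is no genuine analytic difficulty in such a purely algebraic statement; the only step requiring a little care — what passes for the main obstacle here — is the consistent treatment of the empty sums and empty products at the endpoints (the cases $n=0$ and $k=n-1$), together with the index shift when absorbing the factor $(1+g_n)$ into the products.
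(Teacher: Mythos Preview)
Your argument is correct: introducing the partial sums $S_n$, deriving the one-step recursion $S_{n+1}\le(1+g_n)S_n+g_nf_n$, unrolling it by induction into a product form, and then bounding $\prod(1+g_j)$ by the exponential is the standard route and all the steps are sound, including your handling of the empty-sum and empty-product conventions. Note that the paper itself does not prove this lemma at all; it is merely recalled with a citation to \cite{HO09}, so there is no in-paper proof to compare against.
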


In order to prove the convergence of the velocity we will need the following analysis result (\cite{BO13}, p99).\\
Let $X$ and $Y$ two Banach spaces such that $X$ is embedded in a continuous and dense way into $Y$, and let $T>0$ and $p,q$ satisfy $1 \leq p,q \leq + \infty$. We denote:
$$
E_{p,q} = \left\lbrace
u \in L^p(]0,T[,X), \: \dfrac{d u}{d t} \in L^q(]0,T[,Y)
 \right\rbrace
$$

\begin{prpstn}\label{prp:initialdata}
Any element $u$ of $E_{p,q}$ (defined almost everywhere) possesses a continuous representation on  $[0,T]$ with values in $Y$, and the embedding of $E_{p,q}$ into $\mathcal{C}^0([0,T],Y)$ is continuous.\\
Moreover, for all $t_1,t_2 \in [0,T]$ we have
$$
u(t_2)-u(t_1) = \integre{t_1}{t_2}{\dfrac{d u}{d t}}{t}
$$
where it is understood that we have identified $u$ and its continuous representation.
\end{prpstn}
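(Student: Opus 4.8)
The plan is to recognise this as the classical one-dimensional Sobolev embedding theorem for Banach-valued functions: reduce everything to the target space $Y$, build an absolutely continuous representative of $u$ by integrating its derivative, and then recover the quantitative bound with Hölder's inequality.

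First I would reduce to the case of $Y$. Since $X$ embeds continuously in $Y$ and $]0,T[$ is bounded, any $u\in L^p(]0,T[,X)$ lies in $L^p(]0,T[,Y)\subset L^1(]0,T[,Y)$, and likewise $\frac{du}{dt}\in L^q(]0,T[,Y)\subset L^1(]0,T[,Y)$. Hence it suffices to treat $u\in L^1(]0,T[,Y)$ whose distributional derivative $u':=\frac{du}{dt}$ belongs to $L^1(]0,T[,Y)$: produce a continuous representative together with the integral formula, and recover the norm estimate at the very end.

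Then I would set $g(t)=\int_0^t u'(s)\,ds$, a Bochner integral which is well defined because $u'\in L^1(]0,T[,Y)$; the map $t\mapsto g(t)$ is absolutely continuous, hence continuous, from $[0,T]$ to $Y$. A computation using Fubini's theorem shows that for every $\varphi\in C_c^\infty(]0,T[)$ one has $\int_0^T g\,\varphi'\,dt=-\int_0^T u'\varphi\,dt$, i.e. $g$ has distributional derivative $u'$, so $w:=u-g$ has vanishing distributional derivative. A vector-valued du Bois--Reymond argument then forces $w$ to be a.e. constant: fixing $\varphi_0\in C_c^\infty(]0,T[)$ with $\int_0^T\varphi_0=1$ and setting $c=\int_0^T w\,\varphi_0\,dt\in Y$, for arbitrary $\psi\in C_c^\infty(]0,T[)$ the primitive $\varphi(t)=\int_0^t\big(\psi(s)-\big(\int_0^T\psi\big)\varphi_0(s)\big)\,ds$ again lies in $C_c^\infty(]0,T[)$ with $\varphi'=\psi-\big(\int_0^T\psi\big)\varphi_0$, and substituting it into $\int_0^T w\varphi'=0$ gives $\int_0^T (w-c)\psi\,dt=0$ for all $\psi$, hence $w=c$ a.e. Therefore $\tilde u:=g+c$ is the desired continuous representative of $u$, and $\tilde u(t_2)-\tilde u(t_1)=g(t_2)-g(t_1)=\int_{t_1}^{t_2}u'(t)\,dt$, which is the asserted identity.

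For the continuity of the embedding $E_{p,q}\hookrightarrow\mathcal{C}^0([0,T],Y)$, from $\tilde u(t)=\tilde u(s)+\int_s^t u'$ we get $\|\tilde u(t)\|_Y\le\|\tilde u(s)\|_Y+\|u'\|_{L^1(]0,T[,Y)}$ for all $s,t$; averaging in $s$ over $]0,T[$ yields $\|\tilde u(t)\|_Y\le\tfrac1T\|u\|_{L^1(]0,T[,Y)}+\|u'\|_{L^1(]0,T[,Y)}$, and then bounding $\|\cdot\|_Y\le C\|\cdot\|_X$ and using Hölder's inequality ($]0,T[$ being bounded, $L^p$ and $L^q$ embed into $L^1$) gives $\|\tilde u(t)\|_Y\le C\big(\|u\|_{L^p(]0,T[,X)}+\|u'\|_{L^q(]0,T[,Y)}\big)$; taking the supremum over $t$ concludes. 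The hard part is not any single estimate but making the distributional manipulations rigorous in the Bochner-space setting — the Fubini exchange identifying $g'=u'$ and the vector-valued du Bois--Reymond lemma, which here play the role of the familiar scalar Sobolev facts; once those are in place the remainder is routine bookkeeping with Hölder's inequality.
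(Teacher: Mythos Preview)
The paper does not actually prove this proposition: it is stated in the ``Mathematical recalls'' section as a classical analysis result, with a citation to \cite{BO13}, p.~99, and no argument is given. So there is no in-paper proof to compare against.

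Your proof is correct and is precisely the standard argument one finds in the cited reference (and in most texts on evolution equations): reduce to $L^1(]0,T[;Y)$, build the absolutely continuous candidate $g(t)=\int_0^t u'$, verify via Fubini that $g$ has distributional derivative $u'$, apply the vector-valued du Bois--Reymond lemma to conclude $u-g$ is a.e.\ a constant in $Y$, and finish with the averaging-plus-H\"older estimate for the $\mathcal{C}^0$ bound. The only small caveat is that the Fubini step and the du Bois--Reymond argument are usually justified by pairing with an arbitrary $y^*\in Y'$ to reduce to the scalar case; you gesture at this (``making the distributional manipulations rigorous in the Bochner-space setting'') but do not spell it out. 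That is a presentational point, not a mathematical gap.
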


Finally, let us formulate an important compactness theorem, which will be useful to prove the strong convergence of the velocity in section \ref{sec:convergence_epsilon}.\\
\begin{thrm}[Simon \cite{SI87}]\label{th:simon} ~ \\
Let $B_0$, $B_1$ and $B_2$ three Banach spaces with $B_0 \subset B_1 \subset B_2$. We suppose that the embedding of $B_0$ in $B_1$ is compact and the embedding of $B_1$ in $B_2$ is continuous.\\
Then, for all $1 \leq q \leq + \infty$ and $0 < \sigma < 1$, the embedding
\begin{equation*}
L^q(]0,T[;B_0) \cap N_q^{\sigma}(]0,T[;B_2) \hookrightarrow L^q(]0,T[;B_1)
\end{equation*} 
is compact.
\end{thrm}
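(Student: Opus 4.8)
The plan is to reduce the statement to the Fréchet--Kolmogorov characterization of relatively compact subsets of $L^q(]0,T[;B_1)$ and to feed that characterization with two ingredients: an Ehrling-type interpolation inequality governed by the chain $B_0 \hookrightarrow B_1 \hookrightarrow B_2$, and the quantitative control of time translations encoded in the Nikolskii norm. Concretely, let $\mathcal{F}$ be a set bounded in $L^q(]0,T[;B_0) \cap N_q^{\sigma}(]0,T[;B_2)$, say $\Vert f \Vert_{L^q(]0,T[;B_0)} + \Vert f \Vert_{N_q^{\sigma}(]0,T[;B_2)} \leq M$ for all $f \in \mathcal{F}$; since $B_0$ embeds continuously into $B_1$, $\mathcal{F}$ is also bounded in $L^q(]0,T[;B_1)$. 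I will show that $\mathcal{F}$ is relatively compact in $L^q(]0,T[;B_1)$. The first ingredient is the interpolation (Ehrling) inequality: for every $\eta > 0$ there is $C_{\eta} > 0$ with
\[
\Vert u \Vert_{B_1} \leq \eta \Vert u \Vert_{B_0} + C_{\eta} \Vert u \Vert_{B_2}, \qquad \forall u \in B_0 .
\]

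I would prove this inequality by contradiction, using the compactness of $B_0 \hookrightarrow B_1$ and the continuity of the inclusion $B_1 \hookrightarrow B_2$: if it failed for some $\eta$, there would be a sequence $(u_k)$ with $\Vert u_k \Vert_{B_1} = 1$, $\Vert u_k \Vert_{B_0} < 1/\eta$ and $\Vert u_k \Vert_{B_2} \to 0$; by compactness a subsequence converges in $B_1$ to some $u$ with $\Vert u \Vert_{B_1} = 1$, while continuity forces $u = 0$ in $B_2$ and hence $u=0$ in $B_1$, a contradiction. Applying this inequality pointwise in time to $\tau_h f - f$ and taking the $L^q$-norm over $]0,T-h[$ gives
\[
\Vert \tau_h f - f \Vert_{L^q(]0,T-h[;B_1)} \leq \eta \Vert \tau_h f - f \Vert_{L^q(]0,T-h[;B_0)} + C_{\eta} \Vert \tau_h f - f \Vert_{L^q(]0,T-h[;B_2)} .
\]
The first term is at most $2\eta \Vert f \Vert_{L^q(]0,T[;B_0)} \leq 2\eta M$, while the very definition of the Nikolskii norm yields $\Vert \tau_h f - f \Vert_{L^q(]0,T-h[;B_2)} \leq h^{\sigma}\Vert f \Vert_{N_q^{\sigma}(]0,T[;B_2)} \leq h^{\sigma} M$, so that $\Vert \tau_h f - f \Vert_{L^q(]0,T-h[;B_1)} \leq 2\eta M + C_{\eta} M h^{\sigma}$. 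Given $\epsilon > 0$, I first fix $\eta$ with $2\eta M < \epsilon/2$, then $h_0$ with $C_{\eta} M h^{\sigma} < \epsilon/2$ for $0 < h < h_0$; this establishes the equi-smallness of time translations uniformly over $\mathcal{F}$.

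The remaining ingredient is the relative compactness in $B_1$ of the local time averages. For $0 \leq t_1 < t_2 \leq T$, Hölder's inequality gives
\[
\Vert \int_{t_1}^{t_2} f(t)\,\mathrm{d}t \Vert_{B_0} \leq (t_2-t_1)^{1-1/q}\,\Vert f \Vert_{L^q(]0,T[;B_0)} \leq (t_2-t_1)^{1-1/q} M ,
\]
so the set $\{\int_{t_1}^{t_2} f : f \in \mathcal{F}\}$ is bounded in $B_0$, hence relatively compact in $B_1$ because $B_0 \hookrightarrow B_1$ is compact. For $1 \leq q < \infty$ these two properties are precisely the hypotheses of the Fréchet--Kolmogorov characterization of relative compactness in $L^q(]0,T[;B_1)$ (the abstract part of \cite{SI87}), which concludes the argument. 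For the endpoint $q = \infty$ I would instead argue by Arzelà--Ascoli in $\mathcal{C}([0,T];B_1)$: the $N_{\infty}^{\sigma}$ bound gives uniform $\sigma$-Hölder continuity of the $f \in \mathcal{F}$ with values in $B_2$, which together with the interpolation inequality and the $L^{\infty}(B_0)$ bound upgrades to equicontinuity in $B_1$; combined with the pointwise precompactness in $B_1$ obtained as above, Arzelà--Ascoli yields compactness in $\mathcal{C}([0,T];B_1)$, a fortiori in $L^{\infty}$.

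The main obstacle is the rigorous invocation of the vector-valued Fréchet--Kolmogorov theorem, namely that equi-smallness of translations together with precompactness of the local time averages characterizes relative compactness in $L^q(]0,T[;B_1)$, and the correlated need to treat the case $q = \infty$ separately through a continuity-in-time argument rather than the integral criterion. Once this abstract compactness criterion is available, the Ehrling inequality and the Nikolskii estimate reduce the verification of its hypotheses to the routine computations sketched above.
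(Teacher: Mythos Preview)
The paper does not actually prove this theorem: it is stated in Section~2.2 as a known result and attributed to Simon~\cite{SI87}, with no accompanying proof. Your proposal therefore cannot be compared against a proof in the paper.

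That said, your argument is essentially Simon's original proof and is correct in outline. The Ehrling inequality, the translation estimate via the Nikolskii seminorm, the precompactness of time averages via the compact embedding $B_0 \hookrightarrow B_1$, and the appeal to the vector-valued Fr\'echet--Kolmogorov criterion (Theorem~1 of \cite{SI87}) are exactly the ingredients Simon assembles. Your treatment of the endpoint $q=\infty$ via Arzel\`a--Ascoli is also the standard route. The only point worth flagging is the one you already identify as the main obstacle: the abstract compactness criterion in $L^q(]0,T[;B_1)$ is itself a nontrivial result (it is the content of the first part of \cite{SI87}), so your proof is complete modulo that black box, which is entirely appropriate here.
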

%
%Stability analysis
\section{Stability analysis}
\label{sec:stability}
%% INTRODUCTION DE LA SECTION
In this section we prove the stability of the numerical scheme on the problem \eqref{eq:vpp_lifting_prediction}-\eqref{eq:vpp_lifting_initial}. To do so, energy estimates are performed on the prediction, the correction  and pressure  equations respectively given  by \eqref{eq:vpp_lifting_prediction}, \eqref{eq:vpp_lifting_correction} and \eqref{eq:vpp_lifting_gradpression}. 
Then we will obtain an upper bound on  $\div(w) $ in $L^2(]0,T[;\LL^2)$. 
%% DEMONSTRATION DE LA PROPOSITION DE STABILITE
By definition of $w_{\de t}$ (see \eqref{eq:defconstantparmaille} and \eqref{eq:def_lifting}) Proposition \ref{prop:stability} will be a direct consequence of the following result
\begin{prpstn}[Stability]\label{prop:stability2} ~ \\
For $\mu>0$, $v^0 \in \LL^2$, $p^0 \in \LL_0^2$, $\nabla p^0 \in \LL^2$ and $f \in L^2(]0,T[;\LL^2)$ given, if we assume that $\psi$ verifies the hypothesis $\left( \mathcal{H}1\right)$.
Then, for any $T>0$ there exists $C > 0$ such that
 \begin{enumerate}[i)]
\item the sequence $(w^k)_k$ is bounded in $\LL^2(\O)$.
\item the sequence  $\Big(\frac{\tilde{w}^{k+1} - w^{k}}{\sqrt{\de t}}\Big)_k$ is bounded in $\LL^2(\O)$. 
\item $\sum\limits_{k=0}^{n-1}{ \delta t \Vert \nabla \tilde{w}^{k+1} \Vert^2_{\LL^2}}$ is uniformly bounded with respect to $\de t$.
 \end{enumerate}
\end{prpstn}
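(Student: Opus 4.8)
The strategy is the classical energy method for projection schemes, carried out on the lifted system \eqref{eq:vpp_lifting_prediction}--\eqref{eq:vpp_lifting_initial}, with the key algebraic trick that the correction and pressure steps fit together into a telescoping term in $\Vert \nabla p^n \Vert_{\LL^2}^2$. First I would take the $\LL^2$ scalar product of the prediction equation \eqref{eq:vpp_lifting_prediction} with $2\delta t\,\tilde{w}^{n+1}$. The discrete time derivative gives $\Vert \tilde{w}^{n+1}\Vert^2 - \Vert w^n \Vert^2 + \Vert \tilde{w}^{n+1} - w^n \Vert^2$; the viscous term gives $4\mu\,\delta t \Vert D(\tilde{w}^{n+1})\Vert^2$, which by Korn's inequality controls $\delta t \Vert \nabla \tilde{w}^{n+1}\Vert^2$; the penalization term gives a nonnegative $\frac{2\delta t}{\eta}\Vert \chi_{\omega(t^{n+1})}^{1/2}\tilde{w}^{n+1}\Vert^2$ that we keep or discard. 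The transport terms $B(w^n,\tilde{w}^{n+1})$ and $B(\psi^n,\tilde{w}^{n+1})$ tested against $\tilde{w}^{n+1}$ vanish by the antisymmetry $b(u,v,v)=0$. The remaining terms on the right — $(F^{n+1},\tilde{w}^{n+1})$, $(B(w^n,\psi^{n+1}),\tilde{w}^{n+1})$, and crucially the pressure term $(\nabla p^n,\tilde{w}^{n+1})$ — must be absorbed.

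The pressure term is handled by coupling the three scheme equations. Writing $\tilde{v}^{n+1} = v^{n+1} - \hat{v}^{n+1}$ and using \eqref{eq:vpp_lifting_correction} to express $\hat{w}^{n+1}$ as $\frac{\delta t}{\e}\nabla(\div(v^{n+1}))$, together with \eqref{eq:vpp_lifting_gradpression} which reads $\nabla(p^{n+1}-p^n) = -\frac1\e \nabla(\div(v^{n+1}))$, one identifies $\hat{w}^{n+1} = -\delta t\,\nabla(p^{n+1}-p^n)$. Then $(\nabla p^n, \tilde{w}^{n+1}) = (\nabla p^n, v^{n+1}-\psi - \hat{v}^{n+1})$; since $v^{n+1}\cdot\nu = 0$ on $\partial\O$ and $\div\psi = 0$, integration by parts on the first piece involves $\div(v^{n+1})$, which via \eqref{eq:vpp_pression} equals $-\e(p^{n+1}-p^n)$, and the $\hat{v}^{n+1}$ piece becomes $\delta t(\nabla p^n, \nabla(p^{n+1}-p^n))$. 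The latter telescopes after the elementary identity $2a(b-a) = b^2 - a^2 - (b-a)^2$, producing $\delta t(\Vert\nabla p^{n+1}\Vert^2 - \Vert\nabla p^n\Vert^2 - \Vert\nabla(p^{n+1}-p^n)\Vert^2)$, and the $\e(p^{n+1}-p^n)^2$ contribution, under the constraint $\e = \lambda\delta t$, exactly balances (up to constants) the $\Vert\nabla(p^{n+1}-p^n)\Vert^2$ loss; this is where the proportionality hypothesis is essential. So after summing from $k=0$ to $n-1$, the pressure contributions collapse to $\delta t\,\Vert\nabla p^n\Vert^2 - \delta t\,\Vert\nabla p^0\Vert^2$, which is controlled since $\nabla p^0 \in \LL^2$.

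Next I would bound the forcing-type terms. By hypothesis $(\mathcal{H}1)$, $F^{n+1}$ is controlled in $\LL^2$ by $f^{n+1}$, $\frac{\psi^{n+1}-\psi^n}{\delta t}$ (whose discrete $\ell^2$-in-time norm is bounded by $\Vert\partial_t\psi\Vert_{L^2(\LL^2)}$), $\Vert\psi\Vert_{\HH^2}$ and $\Vert\psi\Vert_{\LL^\infty}\Vert\nabla\psi\Vert_{\LL^\infty}$, all square-integrable in time. Thus $\delta t\sum_k\Vert F^{k+1}\Vert^2 \le C$ uniformly, and $2\delta t(F^{n+1},\tilde{w}^{n+1}) \le \delta t\Vert F^{n+1}\Vert^2 + \delta t\Vert\tilde{w}^{n+1}\Vert^2$. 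The genuinely nonlinear leftover $b(w^n,\psi^{n+1},\tilde{w}^{n+1})$ is estimated using $\Vert\nabla\psi\Vert_{\LL^\infty}$: it is bounded by $C\Vert w^n\Vert_{\LL^2}\Vert\tilde{w}^{n+1}\Vert_{\LL^2} \le C\Vert w^n\Vert^2 + \frac{\mu}{2}\,\delta t\Vert\nabla\tilde{w}^{n+1}\Vert^2$ after Young — actually cleaner to keep it as $C(\Vert w^n\Vert^2 + \Vert\tilde{w}^{n+1}\Vert^2)$ and hide half the viscous term only if needed. Collecting everything, with $y_n := \Vert w^n\Vert^2 + \delta t\Vert\nabla p^n\Vert^2$, we arrive at an inequality of the form $y_n + \mu\,\delta t\sum_k\Vert\nabla\tilde{w}^{k+1}\Vert^2 + (\text{nonneg. penalization}) \le C + C\,\delta t\sum_{k=0}^{n-1} y_k$ (after noting $\Vert\tilde{w}^{k+1}\Vert^2 \le 2\Vert w^k\Vert^2 + 2\Vert\tilde{w}^{k+1}-w^k\Vert^2$ and that the squared-increment term sits on the good side). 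Applying the discrete Gronwall Lemma \ref{lem:gronwall_discrete} with $g_k = C\delta t$ (so $\sum g_j \le CT$) yields $y_n \le C$ uniformly, which is (i) and simultaneously bounds $\delta t\Vert\nabla p^n\Vert^2$. Feeding this back into the summed estimate gives (iii), and retaining the $\Vert\tilde{w}^{k+1}-w^k\Vert^2$ term on the left of the pre-Gronwall inequality — now with bounded right-hand side — gives $\Vert\tilde{w}^{n+1}-w^n\Vert^2 \le C\delta t$, i.e. (ii).

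The main obstacle is the bookkeeping around the pressure term: one must combine \eqref{eq:vpp_lifting_correction}, \eqref{eq:vpp_lifting_gradpression} and \eqref{eq:vpp_pression} correctly to see that $(\nabla p^n,\tilde{w}^{n+1})$ produces a telescoping $\delta t\Vert\nabla p^\bullet\Vert^2$ plus a residual that the constraint $\e=\lambda\delta t$ renders harmless, rather than a term that destabilizes the estimate. A secondary technical point is controlling the discrete time difference of $\psi$ and ensuring the Korn inequality (valid since $\tilde w^{n+1}\in\HH^1_0$) is invoked to pass from $D(\tilde w^{n+1})$ to $\nabla\tilde w^{n+1}$.
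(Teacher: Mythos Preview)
Your overall architecture (test the prediction with $\tilde w^{n+1}$, use Korn, kill the transport terms by antisymmetry of $b$, then Gronwall) matches the paper, but the treatment of the pressure term contains a genuine gap. When you decompose $(\nabla p^n,\tilde w^{n+1})=(\nabla p^n,w^{n+1})+\delta t\,(\nabla p^n,\nabla(p^{n+1}-p^n))$ and apply $2a(b-a)=b^2-a^2-(b-a)^2$ with $a=p^n$, \emph{both} increment contributions $-\tfrac{\e}{2}\Vert p^{n+1}-p^n\Vert_{\LL^2}^2$ and $-\tfrac{\delta t}{2}\Vert\nabla(p^{n+1}-p^n)\Vert_{\LL^2}^2$ land on the same (bad) side of the inequality. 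They cannot ``balance'' one another under $\e=\lambda\delta t$; they add up. A second, related gap is that testing only the prediction step leaves $\Vert\tilde w^{n+1}\Vert_{\LL^2}^2$ on the left, whereas your Gronwall quantity is $y_n=\Vert w^n\Vert_{\LL^2}^2+\delta t\Vert\nabla p^n\Vert_{\LL^2}^2$; you never explain how to pass from $\tilde w^{n+1}$ to $w^{n+1}$.

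The paper closes both holes by testing three more equations: \eqref{eq:vpp_lifting_gradpression2} with $w^{n+1}$ (which produces $\tfrac{1}{2\delta t}(\Vert w^{n+1}\Vert^2-\Vert\tilde w^{n+1}\Vert^2+\Vert\hat w^{n+1}\Vert^2)$ and the needed $\tilde w\to w$ transition), \eqref{eq:vpp_pression} with $p^{n+1}$ (giving $+\tfrac{\e}{2}\Vert p^{n+1}-p^n\Vert^2$), and \eqref{eq:vpp_lifting_gradpression2} with $\nabla p^{n+1}$ (giving $+\tfrac{\delta t}{2}\Vert\nabla(p^{n+1}-p^n)\Vert^2$). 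Summing the four, the cross scalar products collapse to the single residual $(\nabla(p^{n+1}-p^n),w^{n+1}-\tilde w^{n+1})$, which Young's inequality splits into pieces absorbed by the positive increment terms just created. Note also that in the paper's argument the constraint $\e=\lambda\delta t$ is \emph{not} used for this proposition; it enters only later (Lemmas~\ref{lem:divergence} and~\ref{lem:translation}).
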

\medskip
\begin{proof}[Proof of Proposition \ref{prop:stability2}] ~ \\
We obtain the result using several energy estimates as in \cite{AN15} for homogeneous Navier-Stokes flows. In our estimates, an additional term appears due to the penalization term on the moving body.\\
Taking $\tilde{w}^{n+1}$ as a test function in \eqref{eq:vpp_lifting_prediction}, we obtain:
\begin{equation*}
\begin{split}
&\dfrac{1}{\delta t} \left( \tilde{w}^{n+1}-w^n,\tilde{w}^{n+1} \right)_{\LL^2}
+ 2 \mu \left( D(\tilde{w}^{n+1}),D(\tilde{w}^{n+1})\right)_{\LL^2}
+ \dfrac{1}{\eta} \integre{\O}{}{\chi_{\o(t^{n+1})} |\tilde{w}^{n+1}|^2}{x}\\
& + b(w^n,\tilde{w}^{n+1},\tilde{w}^{n+1}) + b(\psi^n,\tilde{w}^{n+1},\tilde{w}^{n+1}) 
+ \left( \nabla p^n, \tilde{w}^{n+1} \right)_{\LL^2}
\\
& 
= \left( F^{n+1}, \tilde{w}^{n+1} \right)_{\LL^2} -  b(w^n,\psi^{n+1},\tilde{w}^{n+1})
\end{split}
\end{equation*}
The diffusion term is integrated by parts, the Korn inequality (\cite{KO06}, \cite{KO09}) is then used to obtain the lower bound:
$$
\Vert \nabla \tilde{w}^{n+1} \Vert^2_{\LL^2} \leq 2 \Vert D(\tilde{w}^{n+1}) \Vert^2_{\LL^2}.
$$
The convective terms $b(w^n,\tilde{w}^{n+1},\tilde{w}^{n+1})$ and $b(\psi^n,\tilde{w}^{n+1},\tilde{w}^{n+1})$ vanish by antisymmetry of the trilinear form $b$.
By definition of $b$ given in \eqref{eq:formetrib} and using standard norm estimates,  there exists a constant $C>0$ such that for any $\mu>0$:
\begin{equation*}
\begin{split}
|b(w^n,\psi^{n+1},\tilde{w}^{n+1})|&\leq \dfrac{1}{2 \mu} \Vert w^n \Vert^2_{\LL^2} 
\left( C \Vert \nabla \psi^{n+1} \Vert^2_{\LL^{\infty}} + \Vert \psi^{n+1} \Vert^2_{\LL^{\infty}} \right) + \dfrac{\mu}{4} \Vert \nabla \tilde{w}^{n+1} \Vert^2_{\LL^2}\\
& \leq C \Vert w^n \Vert^2_{\LL^2} + \dfrac{\mu}{4} \Vert \nabla \tilde{w}^{n+1} \Vert^2_{\LL^2}
\end{split}
\end{equation*}
We finally use the following equality:
\begin{equation}\label{eq:scalar}
(a-b,a) =\dfrac{1}{2} \left( \Vert a \Vert^2 -  \Vert b \Vert^2 + \Vert a-b \Vert^2  \right).
\end{equation}
The following estimate is obtained:
\begin{equation}\label{eq:stab1}
\begin{split}
&\dfrac{1}{2 \delta t} \left( \Vert \tilde{w}^{n+1} \Vert^2_{\LL^2} - \Vert w^n \Vert^2_{\LL^2}  + \Vert \tilde{w}^{n+1} - w^n \Vert^2_{\LL^2}\right)
+ \dfrac{\mu}{2} \Vert \nabla \tilde{w}^{n+1} \Vert^2_{\LL^2}\\
& + \dfrac{1}{\eta} \integre{\O}{}{\chi_{\o(t^{n+1})} | \tilde{w}^{n+1} |^2}{x}
+ (\nabla p^n , \tilde{w}^{n+1})_{\LL^2} \leq C \Vert F^{n+1} \Vert^2_{\LL^2} + C \Vert w^n \Vert^2_{\LL^2}.
\end{split}
\end{equation}
From \eqref{eq:vpp_lifting_correction} and \eqref{eq:vpp_lifting_gradpression}, we have:
\begin{equation}\label{eq:vpp_lifting_gradpression2}
\dfrac{w^{n+1}-\tilde{w}^{n+1}}{\delta t} + \nabla (p^{n+1}-p^n) = 0.
\end{equation}
By taking $w^{n+1}$ as a test function in \eqref{eq:vpp_lifting_gradpression2} and using  again \eqref{eq:scalar} we obtain:
\begin{equation}\label{eq:stab2}
\dfrac{1}{2 \delta t} \left( \Vert w^{n+1} \Vert^2_{\LL^2} - \Vert \tilde{w}^{n+1} \Vert^2_{\LL^2} + \Vert w^{n+1} - \tilde{w}^{n+1} \Vert^2_{\LL^2} \right)
+\left(\nabla p^{n+1} - \nabla p^n,w^{n+1}\right)_{\LL^2} = 0.
\end{equation}
Choosing $p^{n+1}$ as a test function in \eqref{eq:vpp_pression}, we have:
\begin{equation}\label{eq:stab3}
\dfrac{\e}{2} \left( \Vert p^{n+1} \Vert^2_{\LL^2}  - \Vert p^n \Vert^2_{\LL^2} + \Vert p^{n+1} - p^n \Vert^2_{\LL^2} \right) - \left( \nabla p^{n+1}, w^{n+1} \right)_{\LL^2} = 0.
\end{equation}
At last, taking $\nabla p^{n+1}$ as a test function in \eqref{eq:vpp_lifting_gradpression2} we obtain:
\begin{equation}\label{eq:stab4}
\dfrac{\delta t }{2} \left( \Vert \nabla p^{n+1} \Vert^2_{\LL^2} - \Vert \nabla p^n \Vert^2_{\LL^2} + \Vert \nabla p^{n+1} - \nabla p^n \Vert^2_{\LL^2} \right) + \left( \nabla p^{n+1}, w^{n+1}-\tilde{w}^{n+1}\right)_{\LL^2} = 0
\end{equation}
Finally, these four estimates \eqref{eq:stab1}, \eqref{eq:stab2}-\eqref{eq:stab4} are summed up. The sum of the scalar products reduces to
$\left( \nabla p^{n+1} - \nabla p^n, w^{n+1} - \tilde{w}^{n+1} \right)$, which is bounded using Young inequality:
$$
|\left(\nabla (p^{n+1}-p^n), w^{n+1}- \tilde{w}^{n+1}\right)_{\LL^2}| \leq
\dfrac{\delta t}{2}\Vert \nabla p^{n+1} - \nabla p^n \Vert^2_{\LL^2}+
\dfrac{1}{2 \delta t} \Vert w^{n+1} - \tilde{w}^{n+1} \Vert^2_{\LL^2}
$$
Therefore,
\begin{equation*}
\begin{split}
&\dfrac{1}{2 \delta t} \left( \Vert w^{n+1} \Vert^2_{\LL^2} - \Vert w^n \Vert^2_{\LL^2} + \Vert \tilde{w}^{n+1} - w^n \Vert^2_{\LL^2} \right) + \dfrac{\mu}{2} \Vert \nabla \tilde{w}^{n+1} \Vert^2_{\LL^2} \\
&+ \dfrac{\e}{2} \left( \Vert p^{n+1} \Vert^2_{\LL^2} - \Vert p^n \Vert^2_{\LL^2} + \Vert p^{n+1} - p^n \Vert^2_{\LL^2}\right) \\
& + \dfrac{\delta t}{2} \left( \Vert \nabla p^{n+1} \Vert^2_{\LL^2} - \Vert \nabla  p^n \Vert^2_{\LL^2} \right) + \dfrac{1}{\eta} \integre{\O}{}{\chi_{\o(t^{n+1})}|\tilde{w}^{n+1}|^2}{x} \leq C \Vert F^{n+1} \Vert^2_{\LL^2} + C \Vert w^n \Vert^2_{\LL^2}
\end{split}
\end{equation*}
This last equation is multiplied by $2 \delta t$ and written in $k$ instead of $n$. Finally, the equations is summed from $k=0$ to $n -1$ with $n \leq N = E\left( \dfrac{T}{\delta t} \right)$ where $E$ denotes the floor function, and we deduce:
\begin{equation}\label{eq:stability}
\begin{split}
&\Vert w^n \Vert^2_{\LL^2} + \delta t \e \Vert p^n \Vert^2_{\LL^2} + \delta t^2 \Vert \nabla p^n \Vert^2_{\LL^2} + \sum\limits_{k=0}^{n-1}{\Vert \tilde{w}^{k+1}-w^k \Vert^2_{\LL^2}} \\
& + \mu \sum\limits_{k=0}^{n-1}{\delta t \Vert \nabla \tilde{w}^{k+1} \Vert^2_{\LL^2}}
+  \e \sum\limits_{k=0}^{n-1}{\delta t\Vert p^{k+1} - p^k \Vert^2_{\LL^2}}
+\dfrac{2 }{\eta} \sum\limits_{k=0}^{n-1}{\delta t \integre{\O}{}{\chi_{\o(t^{k+1})} |\tilde{w}^{k+1}|^2}{x}} \\
& \leq \Vert w^0 \Vert^2_{\LL^2} + \e \delta t\Vert p^0 \Vert^2_{\LL^2} + \delta t^2\Vert \nabla p^0\Vert^2_{\LL^2} + 2C \sum\limits_{k=0}^{n-1}{\delta t \Vert F^{k+1} \Vert^2_{\LL^2}}\\
& + 2C \sum\limits_{k=0}^{n-1}{\delta t \Vert w^k \Vert^2_{\LL^2}}
\end{split}
\end{equation}
It implies:
$$
\Vert w^n \Vert^2_{\LL^2} \leq f_n + \sum\limits_{k=0}^{n-1}{g_k \Vert w^k \Vert^2_{\LL^2}}
$$
with,
\begin{equation*}
\left\lbrace
\begin{split}
& f_n = \Vert w^0 \Vert^2_{\LL^2} + \e \delta t \Vert p^0 \Vert^2_{\LL^2} + \delta t^2 \Vert \nabla p^0 \Vert^2_{\LL^2} + 2 C \sum\limits_{k=0}^{n-1}{\delta t \Vert F^{k+1}\Vert^2_{\LL^2}} \\
&g_k = 2C \delta t
\end{split}
\right.
\end{equation*}
The discrete Gronwall lemma \ref{lem:gronwall_discrete} thus gives the following upper bound on $\Vert w^n \Vert$:
$$
\Vert w^n \Vert^2_{\LL^2} \leq f_n (1 + 2 C T \exp(2 C T))
$$
Going back to \eqref{eq:stability} we deduce there exists $C>0$ such that:
\begin{equation}\label{eq:stability2}
\begin{split}
&\delta t \e \Vert p^n \Vert^2_{\LL^2} + \delta t^2 \Vert \nabla p^n \Vert^2_{\LL^2} + 
\sum\limits_{k=0}^{n-1}{\Vert \tilde{w}^{k+1} - w^k \Vert^2_{\LL^2}} \\
& + \mu \sum\limits_{k=0}^{n-1}{ \delta t \Vert \nabla \tilde{w}^{k+1} \Vert^2_{\LL^2}}
+  \e \sum\limits_{k=0}^{n-1}{\delta t \Vert p^{k+1} - p^k \Vert^2_{\LL^2}}
+ \dfrac{2}{\eta} \sum\limits_{k=0}^{n-1}{\delta t\integre{\O}{}{\chi_{\o(t^{k+1})} |\tilde{w}^{k+1}|^2}{x}} \leq C
\end{split}
\end{equation}
which concludes the proof of Proposition \ref{prop:stability2} and then of Proposition \ref{prop:stability} as, by assumptions, the quantities $f_n$ are uniformly bounded with respect to $n <N$ and to $\de t$.
\end{proof}
%
%% LEMME ESTIMATIONS SUR LA DIVERGENCE
\begin{lmm}\label{lem:divergence}
Under the hypothesis of Proposition \ref{prop:stability}, we have
\begin{enumerate}[i)]
\item The divergence of $w_{\delta t}$ lies in $L^2(]0,T[;\LL^2)$ and there exists $C>0$ such that for any $\e >0$,
\begin{equation}\label{eq:estim_divergence}
\Vert \div \: w_{\delta t} \Vert_{L^2(]0,T[;\LL^2)} \leq C \sqrt{\e}
\end{equation}
As $\psi$ is divergence free, the same inequality holds for $v_{\delta t}$ which implies the strong convergence of $\div(v_{\delta t})$ towards $0$ when $\e$ tends to $0$.
\item $\hat{w}_{\delta t}$ is bounded in $L^2(]0,T[;\HH^{-1})$ and
$$
\Vert \hat{w}_{\delta t} \Vert^2_{L^2(]0,T[;\HH^{-1})} \leq C \delta t \dfrac{\delta t}{\e}
$$
\end{enumerate}
\end{lmm}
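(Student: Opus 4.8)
The plan is to derive both bounds directly from the stability estimate \eqref{eq:stability2}; no new energy argument is needed, only a careful translation of the discrete-in-time quantities controlled there into norms of the time--step functions.

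For item i), I would read off the scalar pressure relation \eqref{eq:vpp_pression}, which gives $\div(v^{n+1}) = -\,\e\,(p^{n+1}-p^n)$ for every $n$. Since $v_{\delta t}$ equals $v^k$ on $[t^k,t^{k+1}[$ and $\div(v^0)=0$ because $v^0\in\HH$, shifting the index by one yields
\begin{equation*}
\Vert \div\, v_{\delta t}\Vert^2_{L^2(]0,T[;\LL^2)} \;=\; \delta t \sum_{k\geq 0}\Vert \div(v^k)\Vert^2_{\LL^2} \;\leq\; \e^2\,\delta t\sum_{k\geq 0}\Vert p^{k+1}-p^k\Vert^2_{\LL^2} \;=\; \e\,\Big(\e\,\delta t\sum_{k\geq 0}\Vert p^{k+1}-p^k\Vert^2_{\LL^2}\Big),
\end{equation*}
and the bracketed sum is bounded by $C$ thanks to \eqref{eq:stability2}. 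Taking square roots gives $\Vert \div\, v_{\delta t}\Vert_{L^2(]0,T[;\LL^2)}\leq C\sqrt{\e}$. Because $\div(\psi)=0$ under $(\mathcal{H}1)$ one has $\div\, w_{\delta t}=\div\, v_{\delta t}$, so the same inequality holds for $w_{\delta t}$, and letting $\e\to 0$ gives the announced strong convergence $\div(v_{\delta t})\to 0$ in $L^2(]0,T[;\LL^2)$.

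For item ii), the cleanest route is \eqref{eq:vpp_lifting_gradpression2}, which says exactly $\hat{w}^{n+1}=w^{n+1}-\tilde{w}^{n+1}=-\,\delta t\,\nabla(p^{n+1}-p^n)$. The $\HH^{-1}$ norm of a gradient is controlled by the $\LL^2$ norm of its potential: for $\phi\in\HH_0^1$, integration by parts gives $\langle\nabla(p^{n+1}-p^n),\phi\rangle=-\int_\O (p^{n+1}-p^n)\,\div(\phi)\,dx$, hence $\Vert\nabla(p^{n+1}-p^n)\Vert_{\HH^{-1}}\leq C\Vert p^{n+1}-p^n\Vert_{\LL^2}$. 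Consequently, $\hat{w}_{\delta t}$ being the step function associated with $(\hat{w}^k)$ and $\hat{w}^0=0$,
\begin{equation*}
\Vert \hat{w}_{\delta t}\Vert^2_{L^2(]0,T[;\HH^{-1})} \;=\; \delta t\sum_{k\geq 0}\Vert \hat{w}^{k+1}\Vert^2_{\HH^{-1}} \;\leq\; C\,(\delta t)^2\,\delta t\sum_{k\geq 0}\Vert p^{k+1}-p^k\Vert^2_{\LL^2} \;\leq\; C\,\frac{(\delta t)^2}{\e} \;=\; C\,\delta t\,\frac{\delta t}{\e},
\end{equation*}
where the last inequality uses \eqref{eq:stability2} once more, after dividing its bound on $\e\,\delta t\sum\Vert p^{k+1}-p^k\Vert^2_{\LL^2}$ by $\e$.

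I do not expect a genuine obstacle here: essentially all the analytic content is already contained in Proposition \ref{prop:stability2}, and Lemma \ref{lem:divergence} amounts to bookkeeping. The only points that require a little care are matching the discrete sums with the $L^2(]0,T[;\cdot)$ norms of the piecewise-constant functions --- the index shift, together with $\div(v^0)=0$ and $\hat{w}^0=0$, makes this harmless --- and the elementary duality estimate $\Vert\nabla q\Vert_{\HH^{-1}}\leq C\Vert q\Vert_{\LL^2}$ applied to the pressure increments $q=p^{n+1}-p^n$.
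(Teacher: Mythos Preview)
Your proof is correct and follows essentially the same approach as the paper. For part~i) you both read off $\div(w^{n+1})=-\e(p^{n+1}-p^n)$ from \eqref{eq:vpp_pression} and invoke the bound on $\e\,\delta t\sum\Vert p^{k+1}-p^k\Vert_{\LL^2}^2$ from \eqref{eq:stability2}; for part~ii) the paper writes $\hat{w}^{k+1}=\frac{\delta t}{\e}\nabla\div(w^{k+1})$ via \eqref{eq:vpp_lifting_correction} and bounds through part~i), whereas you write $\hat{w}^{k+1}=-\delta t\,\nabla(p^{k+1}-p^k)$ via \eqref{eq:vpp_lifting_gradpression2} and bound directly through \eqref{eq:stability2}---these are trivially equivalent reformulations since $\nabla(p^{k+1}-p^k)=-\frac{1}{\e}\nabla\div(w^{k+1})$.
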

\begin{proof}[Proof of i)]
From the pressure equation \eqref{eq:vpp_pression} we have
$
\e (p^{n+1}-p^n) = -\div(w^{n+1})
$.
Then we have 
$$
\sum\limits_{k=0}^{N-1}{\delta t \Vert \div(w^{k+1}) \Vert^2_{\LL^2}} =
\e \sum\limits_{k=0}^{N-1}{\e \delta  t \Vert p^{k+1}-p^k \Vert^2_{\LL^2}}.
$$
 and  we deduce \eqref{eq:estim_divergence} exploiting the stability result \eqref{eq:stability2}.
\end{proof}
\begin{proof}[Proof of ii)]
The second point is proven using the correction equation \eqref{eq:vpp_lifting_correction}. Taking the $\HH^{-1}$-norm we obtain:
\begin{equation}\label{eq:estimwchap}
\Vert \hat{w}^{k+1} \Vert_{\HH^{-1}} \leq \dfrac{\delta t}{\e} \Vert \div(w^{k+1}) \Vert_{\LL^2}.
\end{equation}
Therefore, summing the square of this inequality from $k=0$ to $N-1$ and using the bound of the velocity's divergence \eqref{eq:estim_divergence}, we finally obtain:
$$
\sum\limits_{k=0}^{N-1}{\delta t \Vert \hat{w}^{k+1} \Vert^2_{\HH^{-1}}}
\leq \dfrac{C \delta t^2}{\e}.
$$
\end{proof}
%% LEMME TRANSLATIONS
\begin{lmm}\label{lem:translation}
Under the hypothesis of Proposition \ref{prop:stability}, we have
\begin{enumerate}[i)]
\item The velocity translation satisfies:
\begin{equation}\label{eq:translation}
\sum\limits_{k=0}^{N-1}{\Vert w^{k+1} - w^k \Vert^2_{\HH^{-1}}}
\leq 2 C \dfrac{\delta t}{\e} \left( \dfrac{\e}{\delta t} +1\right)
\end{equation}
\item $\nabla w_{\delta t}$ is bounded in $L^2(]0,T[;\LL^2)$ and
$$
\sum\limits_{k=0}^{N-1}{\delta t \Vert \nabla w^{k+1} \Vert^2_{\LL^2}} \leq
C \left( 1 + \dfrac{\delta t}{\e} \right)
$$
\end{enumerate}
\end{lmm}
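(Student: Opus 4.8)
The plan is to obtain both estimates from the splitting $w^{k+1}=\tilde w^{k+1}+\hat w^{k+1}$, feeding the bounds of Proposition~\ref{prop:stability2} and Lemma~\ref{lem:divergence} into the appropriate spaces and exploiting that the corrector $\hat w^{k+1}$ is irrotational (it is a gradient, as already noted for Proposition~\ref{prop:existenceiteres}).

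For point i) I would write $w^{k+1}-w^k=(\tilde w^{k+1}-w^k)+\hat w^{k+1}$ and measure the two pieces in $\HH^{-1}$. The first piece is controlled by the continuous embedding $\LL^2\hookrightarrow\HH^{-1}$ together with the stability bound $\sum_{k=0}^{N-1}\Vert\tilde w^{k+1}-w^k\Vert^2_{\LL^2}\le C$ contained in \eqref{eq:stability2}, which gives $\sum_{k=0}^{N-1}\Vert\tilde w^{k+1}-w^k\Vert^2_{\HH^{-1}}\le C$. The second piece is exactly Lemma~\ref{lem:divergence} ii): dividing $\sum_{k=0}^{N-1}\delta t\,\Vert\hat w^{k+1}\Vert^2_{\HH^{-1}}\le C\delta t^2/\e$ by $\delta t$ yields $\sum_{k=0}^{N-1}\Vert\hat w^{k+1}\Vert^2_{\HH^{-1}}\le C\delta t/\e$. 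Then $\Vert a+b\Vert^2\le 2\Vert a\Vert^2+2\Vert b\Vert^2$ and summation give $\sum_{k=0}^{N-1}\Vert w^{k+1}-w^k\Vert^2_{\HH^{-1}}\le 2C+2C\delta t/\e=2C\frac{\delta t}{\e}(\frac{\e}{\delta t}+1)$, which is the claim. Nothing is subtle here once Lemma~\ref{lem:divergence} is available; the corrector is the only source of the $\delta t/\e$ factor.

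For point ii) the difficulty is that $w^{k+1}$ is not divergence free, so the Korn argument of the stability proof cannot be used directly; instead I would invoke the Hodge-type norm equivalence on the simply connected domain $\O$ already used for Proposition~\ref{prop:existenceiteres} (\cite{FO78}). Since $\tilde v^{k+1}=0$ and $\hat v^{k+1}\cdot\nu=0$ on $\partial\O$, and $\psi$ may be chosen to vanish near $\partial\O$ (the body does not touch $\partial\O$), one has $w^{k+1}\cdot\nu=0$ on $\partial\O$, whence $\Vert w^{k+1}\Vert^2_{\HH^1}\le C(\Vert\div(w^{k+1})\Vert^2_{\LL^2}+\Vert\curl(w^{k+1})\Vert^2_{\LL^2})$. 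Because $\hat w^{k+1}$ is a gradient, $\curl(\hat w^{k+1})=0$, so $\curl(w^{k+1})=\curl(\tilde w^{k+1})$ and $\Vert\curl(w^{k+1})\Vert_{\LL^2}\le C\Vert\nabla\tilde w^{k+1}\Vert_{\LL^2}$; moreover the pressure relation \eqref{eq:vpp_pression} gives $\div(w^{k+1})=-\e(p^{k+1}-p^k)$. Multiplying by $\delta t$ and summing from $k=0$ to $N-1$, $\sum_{k=0}^{N-1}\delta t\,\Vert\nabla w^{k+1}\Vert^2_{\LL^2}\le C\sum_{k=0}^{N-1}\delta t\,\Vert\div(w^{k+1})\Vert^2_{\LL^2}+C\sum_{k=0}^{N-1}\delta t\,\Vert\nabla\tilde w^{k+1}\Vert^2_{\LL^2}$; by Lemma~\ref{lem:divergence} i) the first sum is $\le C\e$, and by \eqref{eq:stability2} the second is $\le C$, so that $\sum_{k=0}^{N-1}\delta t\,\Vert\nabla w^{k+1}\Vert^2_{\LL^2}$ is bounded uniformly in $\delta t$, in particular by $C(1+\delta t/\e)$.

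I expect the only genuinely delicate point to be the use of the div-curl inequality in ii): one must check that the boundary condition inherited by $w^{k+1}$ is precisely the one ($w^{k+1}\cdot\nu=0$) under which the equivalence holds on a simply connected $\O$, and recognise that, although $w^{k+1}$ is not solenoidal, its divergence is slaved by \eqref{eq:vpp_pression} to $\e(p^{k+1}-p^k)$ and is therefore small in $L^2(]0,T[;\LL^2)$, so that only the rotational part — controlled by $\nabla\tilde w^{k+1}$ — carries weight. Point i) is then a routine reassembly of Proposition~\ref{prop:stability2} and Lemma~\ref{lem:divergence}.
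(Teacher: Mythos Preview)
Your proof of i) is essentially identical to the paper's: split $w^{k+1}-w^k=(\tilde w^{k+1}-w^k)+\hat w^{k+1}$, control the first piece in $\HH^{-1}$ via the embedding $\LL^2\hookrightarrow\HH^{-1}$ and \eqref{eq:stability2}, and the second via Lemma~\ref{lem:divergence}~ii).

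For ii) you take a genuinely different route. The paper does \emph{not} apply the div--curl inequality to $w^{k+1}$; instead it tests the correction equation \eqref{eq:vpp_lifting_correction} against $\hat w^{k+1}$ to obtain
\[
\Vert\hat w^{k+1}\Vert^2_{\LL^2}+\tfrac{\delta t}{2\e}\Vert\div(\hat w^{k+1})\Vert^2_{\LL^2}\le\tfrac{\delta t}{2\e}\Vert\div(\tilde w^{k+1})\Vert^2_{\LL^2},
\]
then applies the div--curl equivalence to $\hat w^{k+1}$ (which satisfies $\hat w^{k+1}\cdot\nu=0$ and $\curl(\hat w^{k+1})=0$ by construction), yielding $\Vert\hat w^{k+1}\Vert^2_{\HH^1}\le C\bigl(1+\tfrac{\delta t}{2\e}\bigr)\Vert\nabla\tilde w^{k+1}\Vert^2_{\LL^2}$; finally it sums $\Vert\nabla w^{k+1}\Vert^2\le 2\Vert\nabla\tilde w^{k+1}\Vert^2+2\Vert\nabla\hat w^{k+1}\Vert^2$. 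Your argument bypasses the energy estimate on the corrector and works directly on $w^{k+1}$, using $\curl(w^{k+1})=\curl(\tilde w^{k+1})$ and $\div(w^{k+1})=-\e(p^{k+1}-p^k)$; this is shorter and actually yields the sharper bound $C(1+\e)$ rather than $C(1+\delta t/\e)$. The price you pay is the need for $w^{k+1}\cdot\nu=0$ on $\partial\O$, i.e.\ $\psi\cdot\nu=0$ there, which is not literally part of $(\mathcal{H}1)$; you are right that $\psi$ can be chosen compactly supported in $\O$ since the body stays away from $\partial\O$, but you should state this as an explicit (harmless) strengthening of the lifting hypothesis. The paper's approach avoids this issue entirely because the div--curl inequality is applied only to $\hat w^{k+1}=\hat v^{k+1}$, whose normal trace vanishes by \eqref{eq:vpp_limit} without any assumption on $\psi$.
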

\begin{proof}[Proof of i)]
The stability result \eqref{eq:stability2} gives a bound on the difference between the predicted velocity at the current time step and the velocity at the previous time step. Using the embedding of $\LL^2$ in $\HH^{-1}$ we deduce:
$$
\sum\limits_{k=0}^{N-1}{\Vert \tilde{w}^{k+1} - w^k \Vert^2_{\HH^{-1}}} \leq C.
$$
Then, combining  \eqref{eq:estimwchap} with Lemma \ref{lem:divergence} the following inequality holds:
\begin{equation*}
\begin{split}
\sum\limits_{k=0}^{N-1}{\Vert w^{k+1} - w^k \Vert^2_{\HH^{-1}}} &
\leq 2 \sum\limits_{k=0}^{N-1}{\left[\Vert w^{k+1} - \tilde{w}^{k+1} \Vert^2_{\HH^{-1}} + \Vert \tilde{w}^{k+1} - w^k \Vert^2_{\HH^{-1}}\right]} \\
& \leq 2 C \dfrac{\delta t}{\e} \left( 1+ \dfrac{\e}{\delta t}  \right)
\end{split}
\end{equation*}
\end{proof}
\begin{proof}[Proof of ii)]
To prove this point, we take $\hat{w}^{k+1}$ as a test function in the correction step \eqref{eq:vpp_lifting_correction}
\begin{equation*}
\begin{split}
\Vert \hat{w}^{k+1} \Vert^2_{\LL^2} + \dfrac{\delta t}{\e} \Vert \div(\hat{w}^{k+1}) \Vert^2_{\LL^2} & = - \dfrac{\delta t}{\e}  \left( \div(\tilde{w}^{k+1}), \div(\hat{w}^{k+1}) \right)_{\LL^2} \\
& \leq \dfrac{\delta t}{2 \e} \Vert \div(\hat{w}^{k+1}) \Vert^2_{\LL^2} + \dfrac{\delta t}{2 \e} \Vert \div(\tilde{w}^{k+1}) \Vert^2_{\LL^2}.
\end{split}
\end{equation*}
We thus obtain an estimate on the corrected velocity $\hat{w}^{k+1}$ and its divergence:
\begin{equation}\label{eq:stab_divergence}
\Vert \hat{w}^{k+1} \Vert^2_{\LL^2} + \dfrac{\delta t}{2 \e} \Vert \div(\hat{w}^{k+1}) \Vert^2_{\LL^2} \leq \dfrac{\delta t}{2 \e} \Vert \div(\tilde{w}^{k+1}) \Vert^2_{\LL^2}.
\end{equation}
Using that the norm $\Vert \cdot \Vert_{\HH^1}$ is equivalent to the norm $ \left( \Vert \cdot \Vert^2_{\LL^2} + \Vert \div( \cdot ) \Vert^2_{\LL^2} + \Vert \curl(\cdot)  \Vert^2_{\LL^2} \right)^{\frac 1 2} $ and $\curl(\hat{w}^{k+1})=0$, we obtain:

\begin{equation*}
\begin{split}
\Vert \hat{w}^{k+1} \Vert^2_{\HH^1} &\leq C \Big( \Vert \hat{w}^{k+1} \Vert^2_{\LL^2} + \Vert \div(\hat{w}^{k+1})  \Vert^2_{\LL^2} \Big) \\
& \leq \left( \dfrac{\delta t}{2 \e} + 1 \right) \Vert \nabla \tilde{w}^{k+1} \Vert^2_{\LL^2}
\end{split}
\end{equation*}
The previous inequality is summed up from $k=0$ to $N-1$. The predicted velocity gradient is bounded using the stability result \eqref{eq:stability2}. Then, we can  find an upper bound on the total velocity gradient:

\begin{equation*}
\begin{split}
\sum\limits_{k=0}^{N-1}{\Vert \nabla w^{k+1} \Vert^2_{\LL^2} \delta t} & \leq
2 \sum\limits_{k=0}^{N-1}{\delta t \Vert \nabla \tilde{w}^{k+1} \Vert^2_{\LL^2}}
+ 2 \sum\limits_{k=0}^{N-1}{\delta t \Vert \nabla \hat{w}^{k+1} \Vert^2_{\LL^2}}\\
& \leq  2 C + 2\left(1 + \dfrac{\delta t}{2 \e} \right) \sum\limits_{k=0}^{N-1}{ \delta t \Vert \nabla \tilde{w}^{k+1} \Vert^2_{\LL^2}} \\
& \leq 2 \left( 2 + \dfrac{\delta t}{2 \e} \right) C
\end{split}
\end{equation*}
\end{proof}
%
%Convergence when epsilon and delta t tend to 0
\section{Convergence analysis when $\e$ and $\delta t$ tend to 0}
\label{sec:convergence_epsilon}
A stability result has been obtained in the previous section. The main purpose of this section is to establish Theorem \ref{th:convergence_epsilon} which can be write as the following convergence theorem, when $\e$ and $\delta t$ tend to $0$ with $\e = \lambda \delta t$. 
\begin{thrm}[Convergence when $\e$ and $\delta t$ tend to $0$]\label{th:convergence_epsilon2} ~ \\
We assume that the hypothesis of Theorem \ref{th:convergence_epsilon} are satisfied.
Then, up to a subsequence, $(v^{n},p^{n})_n$ solution of \eqref{eq:vpp_prediction}-\eqref{eq:vpp_initial} converges towards $(v,p)$ weak solution of the penalized Navier Stokes problem \eqref{eq:NavierStokes} when $\e$ and $\delta t$ tend to $0$ with $\e = \lambda \delta t$.
Furthermore, $v$ and $p$ satisfy:
$$
v \in \Linf(]0,T[;\LL^2)\cap \Ldeux(]0,T[;\HH^1),
\qquad
p \in W^{-1,\infty}(]0,T[;\LL_0^2).
$$
Moreover, this solution is unique in two dimensional space.
%% Je definirai dans la réécriture du théorème les v delta t et p delta t
\end{thrm}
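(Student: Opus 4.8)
The plan is to combine the a priori estimates of Section~\ref{sec:stability} with a compactness argument of Simon type (Theorem~\ref{th:simon}), to pass to the limit in the weak formulation of the scheme, and to recover the limit pressure as a distributional time derivative via the inf--sup condition on $\O$. First I would record the consequences, under the constraint $\e=\lambda\delta t$, of Proposition~\ref{prop:stability2} and Lemmas~\ref{lem:divergence} and~\ref{lem:translation}: $v_{\delta t}$ and $w_{\delta t}=v_{\delta t}-\psi$ are bounded in $\Linf(]0,T[;\LL^2)\cap L^2(]0,T[;\HH^1)$, the same holds for $\tilde v_{\delta t}$, while $\hat v_{\delta t}$ is bounded in $L^2(]0,T[;\HH^1)$ and $\hat v_{\delta t}$, $\tilde v_{\delta t}(\cdot+\delta t)-v_{\delta t}(\cdot)$ tend to $0$ in $\Linf(]0,T[;\LL^2)$, and $\div v_{\delta t}\to 0$ in $L^2(]0,T[;\LL^2)$ by \eqref{eq:estim_divergence}. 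Up to a subsequence this gives $v_{\delta t}\cvfaible v$ weakly-$\ast$ in $\Linf(]0,T[;\LL^2)$ and weakly in $L^2(]0,T[;\HH^1)$, with $\div v=0$, and the same weak limit for $\tilde v_{\delta t}$.

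The decisive step is to upgrade this to strong convergence of $v_{\delta t}$ in $L^2(]0,T[;\LL^2)$. For that I would establish a uniform time-translation estimate in a negative norm by testing the prediction equation~\eqref{eq:vpp_prediction} against divergence-free $\phi\in\mathbf V$, which annihilates $\nabla p^n$ and, since $\hat v^{n+1}$ is a gradient, also $\hat v^{n+1}$, yielding $\Vert v^{n+1}-v^n\Vert_{\mathbf V'}\le C\,\delta t\,g_n$ with $\delta t\sum_n g_n^{\,q}\le C$ for a suitable $q\in(1,2]$ ($q=2$ if $d=2$, $q=4/3$ if $d=3$), where the control of $g_n$ uses the antisymmetry and the Sobolev estimates of $b$ together with the gradient bounds of Proposition~\ref{prop:stability2}. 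Summing over consecutive steps and distinguishing translation lengths below and above $\delta t$ (a naive telescoping bound being too lossy, one exploits this estimate carefully), one obtains a bound of $v_{\delta t}$ in $N_q^{\sigma}(]0,T[;\mathbf V')$, uniform in $\delta t$, for some $\sigma\in(0,1)$; since $\HH^1\hookrightarrow\hookrightarrow\LL^2\hookrightarrow\mathbf V'$, Theorem~\ref{th:simon} gives $v_{\delta t}\to v$ strongly in $L^q(]0,T[;\LL^2)$, hence, interpolating with the $\Linf(\LL^2)$ bound, in $L^2(]0,T[;\LL^2)$; the controls above then transfer the same strong convergence to $\tilde v_{\delta t}$.

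Next I would multiply \eqref{eq:vpp_prediction} by $\delta t\,\phi(t^{n+1})$ with $\phi\in\mathbf W$, sum over $n$ and integrate by parts in time, then pass to the limit: the discrete time-derivative term converges by the strong $\LL^2$ convergence and $v^0\to v_0$, the viscous and source terms by weak convergence, the convective term $b(v^n,\tilde v^{n+1},\phi)$ as a product of a strongly and a weakly convergent factor, and the penalization $\tfrac1\eta\chi_{\o(t^{n+1})}(\tilde v^{n+1}-\psi)$ by the strong convergence of $\tilde v_{\delta t}$ together with the regularity in $t$ of $\chi_{\o(t)}$ implied by $(\mathcal{H}1)$; the constraint $\div v=0$ passes since $\div v_{\delta t}\to 0$ strongly, and the initial datum is recovered from the boundary term in the integration by parts. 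For the pressure I would introduce the primitive $P_{\delta t}(t)=\int_0^t p_{\delta t}(s)\,ds$, which is the piecewise-affine interpolant of $\delta t\sum_{k<n}p^k$ and satisfies $\partial_t P_{\delta t}=p_{\delta t}$: testing \eqref{eq:vpp_prediction} against an arbitrary $\phi\in\HH_0^1$, summing in time and using $\hat v^{k+1}=-\delta t\,\nabla(p^{k+1}-p^k)$ (see \eqref{eq:vpp_lifting_gradpression2}) so that the correction terms telescope, the right-hand side is bounded by $\Vert\phi\Vert_{\HH^1}$ via the stability estimates (in particular $\delta t\Vert p^n\Vert_{\LL^2}\le C$ from \eqref{eq:stability2} and $\e=\lambda\delta t$), so the inf--sup (Ne\v{c}as) inequality on $\O$ gives $P_{\delta t}$ bounded in $\Linf(]0,T[;\LL_0^2)$; a weak-$\ast$ limit $P$ then furnishes $p=\partial_t P\in W^{-1,\infty}(]0,T[;\LL_0^2)$, and one checks that $(v,p)$ solves \eqref{eq:NavierStokes} in the weak sense, $p$ being the de~Rham multiplier associated with $v$.

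Finally, $v\in\Linf(]0,T[;\LL^2)\cap L^2(]0,T[;\HH^1)$ follows from weak lower semicontinuity of the norms, and uniqueness for $d=2$ is the classical energy argument applied to the difference $w=v_1-v_2$ of two solutions (the convective difference reduces to $b(w,v_1,w)$, Ladyzhenskaya's inequality $\Vert w\Vert_{\LL^4}^2\le C\Vert w\Vert_{\LL^2}\Vert\nabla w\Vert_{\LL^2}$, $b(v_2,w,w)=0$, then Gronwall), the penalization term $\tfrac1\eta\int_\O\chi_{\o(t)}|w|^2$ appearing with a favourable sign. The step I expect to be the main obstacle is the strong convergence of the velocity, namely producing the negative-norm time-translation estimate uniformly in $\delta t$ despite the prediction/correction splitting, which forces one to use the equation itself and the cancellation of $\nabla p^n$ and of the correction $\hat v^{n+1}$ against divergence-free test functions; the reconstruction of the pressure through the inf--sup condition and the handling of the (fixed) $\eta$-penalization term in the limit are the remaining delicate points.
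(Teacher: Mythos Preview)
Your plan is sound and would yield the theorem, but it departs from the paper's proof in two places.

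For the strong $L^2(\LL^2)$ convergence, the paper does \emph{not} test the prediction equation in $\mathbf V'$. It works entirely in $\HH^{-1}$: from the stability bound $\sum_k\|\tilde w^{k+1}-w^k\|_{\LL^2}^2\le C$ (Proposition~\ref{prop:stability2}) and the correction estimate $\|\hat w^{k+1}\|_{\HH^{-1}}\le\tfrac{\delta t}{\e}\|\div w^{k+1}\|_{\LL^2}$ it obtains $\sum_k\|w^{k+1}-w^k\|_{\HH^{-1}}^2\le C$ (Lemma~\ref{lem:translation}~i)), and a discrete Kolmogorov lemma (Appendix, Lemma~\ref{lem:kolmogorov}) converts this into $w_{\delta t}\in N_2^{1/2}(]0,T[;\HH^{-1})$ uniformly, after which Theorem~\ref{th:simon} applies with $B_2=\HH^{-1}$. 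Your $\mathbf V'$ route exploits the structure of the equation (cancellation of $\nabla p^n$ and of the gradient $\hat v^{n+1}$) and actually yields a stronger bound, essentially a discrete time derivative in $L^{4/3}(\mathbf V')$; the paper's route is purely algebraic and avoids having to estimate the nonlinear and penalization terms in a dual norm. For the pressure, the paper does not build a discrete primitive $P_{\delta t}$ or use the inf--sup inequality on the scheme: it first passes to the limit in $\mathbf V'$ (equation~\eqref{eq:inVprime}), then sets $K(t)=v(t)-v_0+\int_0^t\big(-\div(2\mu Dv)+(v\cdot\nabla)v+\tfrac1\eta\chi_{\o}(v-v_s)-f\big)$, observes $\langle K(t),\phi\rangle=0$ for all $\phi\in\mathbf V$, and applies De~Rham \emph{at the limit} to get $K(t)=-\nabla\pi(t)$ with $\pi$ weakly continuous in $\LL_0^2$, whence $p=\partial_t\pi\in W^{-1,\infty}(]0,T[;\LL_0^2)$. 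Your approach is legitimate but requires an extra step to identify the weak-$\ast$ limit $P$ with the time primitive of the pressure appearing in the limit equation.

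One small imprecision: you claim $\hat v_{\delta t}\to 0$ in $\Linf(]0,T[;\LL^2)$, but the estimates only give $\hat v_{\delta t}$ \emph{bounded} there (from $\hat v^{n+1}=-\delta t\,\nabla(p^{n+1}-p^n)$ and $\delta t\|\nabla p^n\|_{\LL^2}\le C$) together with $\hat v_{\delta t}\to 0$ in $L^2(]0,T[;\HH^{-1})$ (Lemma~\ref{lem:divergence}~ii)). This is harmless: equality of the weak limits of $v_{\delta t}$ and $\tilde v_{\delta t}$ follows from the $\HH^{-1}$ convergence, and strong $L^2(\LL^2)$ convergence of $\hat v_{\delta t}$ to $0$ then comes by interpolating with its $L^2(\HH^1)$ bound.
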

%% WEAK CONVERGENCE OF THE VELOCITY
\subsection{Weak convergence of the velocity}
We first establish the following result:
\begin{lmm}\label{lem:weak_convergence}
Under the hypothesis of Proposition \ref{prop:stability},
there exists $v \in L^2(]0,T[;\LL^2)$ (respectively $\tilde{v}\in L^2(]0,T[;\LL^2)$) such that, up to a subsequence, $v_{\delta t}$ (resp. $\tilde{v}_{\delta t}$) weakly converges towards $v$ (resp. $\tilde{v}$) when $\e$ and $\delta t$ tend to $0$ with $\e = \lambda \delta t$ :
\begin{enumerate}[i)]
\item $\left(v_{\delta t}\right)_{\delta t} \cvfaible v \text{ weakly in } L^2(]0,T[;\HH^1)$.
\item $\left( \tilde{v}_{\delta t} \right)_{\delta t} \cvfaible \tilde{v} \text{ weakly in }L^2(]0,T[;\HH^1)$
\end{enumerate}
Moreover, at the limit $\tilde{v} = v$
\end{lmm}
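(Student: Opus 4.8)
The plan is to obtain $v$ and $\tilde v$ by weak compactness from the $\delta t$-uniform bounds already established, and then to identify $\tilde v$ with $v$ by showing that the corrector $\hat v_{\delta t}$ tends to zero.

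\medskip
\noindent\textbf{Step 1: $\delta t$-uniform $L^2(]0,T[;\HH^1)$ bounds.} I would first record that the lifted iterates are bounded in the relevant norms. From Proposition~\ref{prop:stability2} (i) the sequence $(w^k)_k$ is bounded in $\LL^2$, so $\sum_k \delta t\,\Vert w^k\Vert^2_{\LL^2}\le CT$, i.e. $w_{\delta t}$ is bounded in $L^2(]0,T[;\LL^2)$; together with Lemma~\ref{lem:translation} (ii), which using the constraint $\e=\lambda\delta t$ reads $\sum_k \delta t\,\Vert\nabla w^{k+1}\Vert^2_{\LL^2}\le C(1+1/\lambda)$, this shows $w_{\delta t}$ is bounded in $L^2(]0,T[;\HH^1)$. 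Similarly, Proposition~\ref{prop:stability2} (ii)+(i) give $\Vert\tilde w^{k+1}\Vert_{\LL^2}\le\Vert w^k\Vert_{\LL^2}+C\sqrt{\delta t}\le C$, hence $\tilde w_{\delta t}$ bounded in $L^2(]0,T[;\LL^2)$, while (iii) bounds $\nabla\tilde w_{\delta t}$ in $L^2(]0,T[;\LL^2)$; so $\tilde w_{\delta t}$ is also bounded in $L^2(]0,T[;\HH^1)$. Since $L^2(]0,T[;\HH^1)$ is reflexive, a single (diagonal) subsequence extraction produces $w_{\delta t}\cvfaible w$ and $\tilde w_{\delta t}\cvfaible\tilde w$ weakly in $L^2(]0,T[;\HH^1)$.

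\medskip
\noindent\textbf{Step 2: back to the velocities.} Under $(\mathcal H1)$, $\psi\in L^2(]0,T[;\HH^2)$ with $\partial_t\psi\in L^2(]0,T[;\LL^2)$, so $\psi\in\mathcal C^0([0,T];\LL^2)$ and its piecewise-constant-in-time interpolant $\psi_{\delta t}$ converges to $\psi$ strongly in $L^2(]0,T[;\LL^2)$ while staying bounded in $L^2(]0,T[;\HH^2)$; hence $\psi_{\delta t}\cvfaible\psi$ weakly in $L^2(]0,T[;\HH^1)$. Using the lifting relations \eqref{eq:def_lifting}, $v_{\delta t}=w_{\delta t}+\psi_{\delta t}$ and $\tilde v_{\delta t}=\tilde w_{\delta t}+\psi_{\delta t}$, so along the same subsequence $v_{\delta t}\cvfaible v:=w+\psi$ and $\tilde v_{\delta t}\cvfaible\tilde v:=\tilde w+\psi$ weakly in $L^2(]0,T[;\HH^1)$, which is exactly (i) and (ii) (and $v,\tilde v\in L^2(]0,T[;\HH^1)\subset L^2(]0,T[;\LL^2)$).

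\medskip
\noindent\textbf{Step 3: $\tilde v=v$.} By construction $v^{n+1}=\tilde v^{n+1}+\hat v^{n+1}$ and $\hat w^n=\hat v^n$, so at the level of step functions $v_{\delta t}-\tilde v_{\delta t}=\hat w_{\delta t}$ up to the index shift $k\mapsto k+1$, which does not affect the limit. By Lemma~\ref{lem:divergence} (ii) and $\e=\lambda\delta t$ we have $\Vert\hat w_{\delta t}\Vert^2_{L^2(]0,T[;\HH^{-1})}\le C\delta t/\lambda\to 0$, hence $\hat w_{\delta t}\to 0$ strongly, a fortiori weakly, in $L^2(]0,T[;\HH^{-1})$. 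On the other hand, the weak convergences of $v_{\delta t}$ and $\tilde v_{\delta t}$ in $L^2(]0,T[;\HH^1)$ carry over, through the continuous embedding into $L^2(]0,T[;\HH^{-1})$, to weak convergence there, so $v_{\delta t}-\tilde v_{\delta t}\cvfaible v-\tilde v$ in $L^2(]0,T[;\HH^{-1})$. Uniqueness of the weak limit forces $v-\tilde v=0$ in $L^2(]0,T[;\HH^{-1})$, hence $v=\tilde v$ in $L^2(]0,T[;\LL^2)$ since both functions belong to that space.

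\medskip
\noindent\textbf{Main obstacle.} There is no deep difficulty here, only one point requiring care: every $\delta t$-uniform bound used above — the $L^2(]0,T[;\LL^2)$ bound on $\nabla w_{\delta t}$ from Lemma~\ref{lem:translation}, and the vanishing of $\hat w_{\delta t}$ in $L^2(]0,T[;\HH^{-1})$ from Lemma~\ref{lem:divergence} — degenerates without the proportionality constraint $\e=\lambda\delta t$, so the whole argument (and in particular the identification $\tilde v=v$) rests on that hypothesis. The secondary technicalities — the $\sqrt{\delta t}$ gap between $\tilde w^{k+1}$ and $w^k$, the one extremal time slab, and the piecewise-constant interpolation of $\psi$ — are harmless for weak limits.
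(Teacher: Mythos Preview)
Your proposal is correct and follows essentially the same approach as the paper: uniform $L^2(]0,T[;\HH^1)$ bounds from Proposition~\ref{prop:stability2} and Lemma~\ref{lem:translation} (under $\e=\lambda\delta t$) give weak compactness, and Lemma~\ref{lem:divergence}~(ii) forces $\hat w_{\delta t}\to 0$ so that the two limits coincide. The only cosmetic difference is that the paper phrases the identification step as ``$w_{\delta t}-\tilde w_{\delta t}$ bounded in $L^2(]0,T[;\HH^1)$ and strongly vanishing in $L^2(]0,T[;\HH^{-1})$, hence weakly vanishing in $L^2(]0,T[;\HH^1)$'', while you pass through the embedding into $\HH^{-1}$ and invoke uniqueness of the weak limit there; both arguments are equivalent.
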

\begin{proof}
This result directly comes from the stability study. Indeed, the regularity of $\psi$ \eqref{eq:psi}, Proposition \ref{prop:stability} and lemma \ref{lem:translation} ensures that $\left(v_{\delta t}\right)_{\delta t}$ is uniformly bounded in $L^2(]0,T[;\HH^1)$ provided that $\e = \lambda \delta t$. Therefore, we can extract a subsequence still denoted $v_{\delta  t}$ that weakly converges towards a function $v$ in $ L^2(]0,T[;\HH^1)$.\\
In the same way, the sequence $\left( \tilde{v}_{\delta t}\right)_{\delta t}$ is bounded in $L^2(]0,T[;\HH^1)$. Then we can extract a subsequence that weakly converges towards $\tilde{v}$ in $L^2(]0,T[;\HH^1)$ when $\delta t$ and $\e$ tend to $0$ with $\e = \lambda \delta t$.\\

Let us show that $\tilde{v} = v$.
From lemma \ref{lem:divergence} when $\e = \lambda \delta t$
\begin{equation*}
\hat{w}_{\delta t} := w_{\delta t} - \tilde{w}_{\delta t} \underset{\delta t \rightarrow 0}{\cvfort} 0 \text{ strongly in }L^2(]0,T[;\HH^{-1}).
\end{equation*}
Moreover, Lemma \ref{lem:translation} and Proposition \ref{prop:stability} ensures that $w_{\delta t} - \tilde{w}_{\delta t}$ is bounded in $L^2(]0,T[;\HH^{1})$. Therefore,
\begin{equation}
w_{\delta t} - \tilde{w}_{\delta t} \cvfaible 0 \text{ weakly in } L^2(]0,T[;\HH^1)
\end{equation}
By unicity of the limit, we conclude that $\tilde{w} = w$ and $\tilde{v} = v$.
\end{proof}
%% STRONG CONVERGENCE OF THE VELOCITY
\subsection{Strong convergence of the velocity}
In order to use Simon's results (Theorem \ref{th:simon}), let us estimate $w_{\delta t}$ in an appropriate Nikolskii space.
The following lemma ensures that $w_{\delta t}$ belongs to the Nikolskii space $N_2^{\frac 1 2}(]0,T[;\HH^{-1})$
\begin{lmm}\label{lem:kolmogorov}
Let $C_M$ be a positive constant and $h>0$.
Let $u$ defined on a time interval $[0,T)$ with values in $\HH^{-1}$. We denote $u_k$ the value of  $u$ at the time $t^k$ and $u_{\delta t}$ the step function defined as in \eqref{eq:defconstantparmaille}.\\
Moreover we assume that the following uniform upper bounds hold:
\begin{equation*}
\begin{split}
&\sum\limits_{k=0}^{N-1}{\Vert u^{k+1} - u^k \Vert^2_{\HH^{-1}}} \leq C_M \\
& \sup\limits_{k \leq N} \Vert u^k \Vert^2_{\HH^{-1}} \leq C_M
\end{split}
\end{equation*}
Then, there exists $C>0$ independent of $\delta t$ such that:
\begin{equation}\label{eq:kolmogorov01}
\integre{0}{T-h}{\Vert u_{\delta t}(t+h) - u_{\delta t}(t) \Vert_{\HH^{-1}}}{t} \leq C h^{\frac 1 2}
\end{equation}
And,
\begin{equation}\label{eq:kolmogorov02}
\left(\integre{0}{T-h}{\Vert u_{\delta t}(t+h) - u_{\delta t}(t) \Vert^2_{\HH^{-1}}}{t} \right)^{\frac 1 2} \leq C h^{\frac 1 2}
\end{equation}
\end{lmm}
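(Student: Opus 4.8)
The plan is to deduce everything from the $L^2$–bound \eqref{eq:kolmogorov02}: once it is proved, \eqref{eq:kolmogorov01} follows immediately from the Cauchy–Schwarz inequality in $t$ on the bounded interval $]0,T-h[$, the extra factor $\sqrt{T}$ being absorbed into $C$. So I fix $h>0$ and estimate $I_h:=\int_0^{T-h}\|u_{\delta t}(t+h)-u_{\delta t}(t)\|^2_{\HH^{-1}}\,\mathrm dt$, treating separately the regimes $h\le\delta t$ and $h>\delta t$.

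\smallskip
\noindent\emph{Small translations, $0<h\le\delta t$.} Here the step structure of $u_{\delta t}$ does all the work. On a cell $[t^j,t^{j+1})$ the shifted point $t+h$ stays in $[t^j,t^{j+2})$, so $u_{\delta t}(t+h)-u_{\delta t}(t)$ vanishes for $t<t^{j+1}-h$ and equals the single jump $u^{j+1}-u^j$ for $t\in[t^{j+1}-h,t^{j+1})$; integrating over the cell gives $h\,\|u^{j+1}-u^j\|^2_{\HH^{-1}}$, and summing over $j$ together with the first hypothesis yields
$$
I_h\ \le\ h\sum_{k=0}^{N-1}\|u^{k+1}-u^k\|^2_{\HH^{-1}}\ \le\ C_M\,h ,
$$
that is \eqref{eq:kolmogorov02} with $C=\sqrt{C_M}$.

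\smallskip
\noindent\emph{Large translations, $h>\delta t$.} Write $h=(m+\theta)\delta t$ with $m=E(h/\delta t)\ge 1$ and $\theta\in[0,1)$. Reasoning cell by cell as before, for $t\in[t^j,t^{j+1})$ the shifted index is $j+m$ or $j+m+1$, which gives
$$
I_h\ \le\ \delta t\sum_{j}\Big(\|u^{j+m}-u^j\|^2_{\HH^{-1}}+\|u^{j+m+1}-u^j\|^2_{\HH^{-1}}\Big),
$$
and since $m\,\delta t\le h$ and $(m+1)\delta t\le 2h$ it suffices to prove that for every integer $p\ge 1$,
$$
\sum_{j=0}^{N-p}\|u^{j+p}-u^j\|^2_{\HH^{-1}}\ \le\ C\,p ,\qquad C=C(C_M) .
$$
I would obtain this by telescoping $u^{j+p}-u^j=\sum_{\ell=j}^{j+p-1}(u^{\ell+1}-u^\ell)$, expanding the square, and reorganising the resulting double sum $\sum_{\ell,\ell'}(u^{\ell+1}-u^\ell,u^{\ell'+1}-u^{\ell'})\,(p-|\ell-\ell'|)_+$: the diagonal part is exactly $p\sum_\ell\|u^{\ell+1}-u^\ell\|^2_{\HH^{-1}}\le p\,C_M$ by the first hypothesis, while for the off‑diagonal part a discrete summation by parts moves the triangular weights $(p-|\ell-\ell'|)_+$ off the increments and onto the $u^k$'s themselves, after which the second hypothesis — the uniform bound $\|u^k\|_{\HH^{-1}}\le\sqrt{C_M}$ — controls what remains. (Equivalently one may split the index $j$ into residue classes modulo $p$ and telescope along each class, each coarse difference being bounded by $2\sqrt{C_M}$ and the accumulated oscillation by $C_M$.) This gives $\sum_j\|u^{j+p}-u^j\|^2_{\HH^{-1}}\le C\,p$, hence $I_h\le C\,h$ after multiplication by $\delta t$.

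\smallskip
The delicate point — and I expect it to be the real obstacle — is precisely this last estimate. A blunt use of Cauchy–Schwarz on the telescoped sum, $\|u^{j+p}-u^j\|^2_{\HH^{-1}}\le p\sum_{\ell=j}^{j+p-1}\|u^{\ell+1}-u^\ell\|^2_{\HH^{-1}}$, inserts a factor $p\sim h/\delta t$ which after summation produces a spurious $\delta t^{-1}$, and the whole game is to cancel it. This cannot be done from the increments alone, since they may add up coherently along a chain of length $p$ (a random‑walk‑type sequence); the uniform $\HH^{-1}$–bound on $u^k$ itself is therefore essential, and the two hypotheses must be used simultaneously, with care taken to keep every constant independent of $\delta t$ and — once the case $h\le\delta t$ has been disposed of — of $h$ as well.
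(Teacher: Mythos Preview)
Your treatment of the regime $h\le\delta t$ is fine and agrees with the paper. The trouble is entirely in the case $h>\delta t$: the inequality you aim for,
\[
\sum_{j=0}^{N-p}\|u^{j+p}-u^j\|^2_{\HH^{-1}}\ \le\ C\,p,
\]
is \emph{false} under the stated hypotheses, so neither the summation-by-parts sketch nor the residue-class variant can be completed. A concrete obstruction: take scalar values $u^k=\sin(k\pi/\sqrt{N})$. Then $|u^{k+1}-u^k|\le\pi/\sqrt{N}$ gives $\sum_k|u^{k+1}-u^k|^2\le\pi^2$, and $\sup_k|u^k|\le 1$, so both hypotheses hold with $C_M=\pi^2$; yet for $p=\lfloor\sqrt{N}\rfloor$ one has $u^{j+p}\approx -u^j$, hence $\sum_j|u^{j+p}-u^j|^2\approx 2N$, not $O(p)=O(\sqrt{N})$. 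In terms of the integral this means $I_h\approx 2T$ while $h\approx\sqrt{T\delta t}\to 0$, so no estimate $I_h\le Ch$ with $C$ independent of $\delta t$ is possible. Your instinct that both hypotheses must interact is correct, but the two together are simply not strong enough: the sine example saturates both simultaneously.

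For comparison, the paper's own proof (in the appendix) attacks $h>\delta t$ by writing $u_{\delta t}(t+h)-u_{\delta t}(t)$ as $\sum_k\big(u^{k+j}\chi_{k+j}(t)-u^k\chi_k(t)\big)$, which telescopes to boundary terms supported on only $2j$ cells and then yields $\int\|\cdot\|\,\mathrm dt\le 2h\sup_k\|u^k\|$. But that identity is wrong: the correct expression is $\sum_k(u^{k+j}-u^k)\chi_k(t)$, which does \emph{not} telescope, and the same sine example violates the paper's conclusion just as it violates yours. In short, the abstract lemma as stated cannot be proved; to recover the Nikolskii bound actually needed for $w_{\delta t}$ one has to go back to the finer estimates available in that specific setting (for instance the $\LL^2$ control of $\tilde w^{k+1}-w^k$ together with the $\HH^{-1}$-smallness of $\hat w^{k+1}$), rather than rely on the two hypotheses of this lemma alone.
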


We postpone the proof of this lemma in appendix \ref{sec:appendix}.
\begin{lmm}\label{lem:strongconvergence}
Let $p \in [2,+\infty[$. If $\delta t$ and $\e$ tend to $0$ with $\e = \lambda \delta t$ then $(v_{\delta t})_{\de t}$ strongly converges towards $v$ in $L^p(]0,T[;\LL^2)$.
\end{lmm}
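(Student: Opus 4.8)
The plan is to apply Simon's compactness theorem (Theorem \ref{th:simon}) to the lifted sequence $w_{\delta t}$ with the chain $B_0 = \HH^1$, $B_1 = \LL^2$, $B_2 = \HH^{-1}$. The embedding $\HH^1 \hookrightarrow \LL^2$ is compact (Rellich) and $\LL^2 \hookrightarrow \HH^{-1}$ is continuous, so the hypotheses of Theorem \ref{th:simon} are met. First I would collect the two ingredients that place $w_{\delta t}$ in the intersection space $L^q(]0,T[;\HH^1) \cap N_q^{\sigma}(]0,T[;\HH^{-1})$ with $q = 2$ and $\sigma = \tfrac12$: on the one hand, Proposition \ref{prop:stability} together with Lemma \ref{lem:translation}(ii) gives that $\nabla w_{\delta t}$ is bounded in $L^2(]0,T[;\LL^2)$ and $w_{\delta t}$ is bounded in $\Linf(]0,T[;\LL^2)$, hence $w_{\delta t}$ is bounded in $L^2(]0,T[;\HH^1)$ (using that $w_{\delta t}$ vanishes on $\partial\O$, up to the lift, so that the $\HH^1$ norm is controlled by the gradient via Poincaré); on the other hand, Lemma \ref{lem:translation}(i) provides the uniform bound $\sum_{k=0}^{N-1} \Vert w^{k+1} - w^k \Vert^2_{\HH^{-1}} \le C_M$ (this is where the constraint $\e = \lambda\delta t$ is used, since the right-hand side is $2C\frac{\delta t}{\e}(\frac{\e}{\delta t}+1)$), and the $\Linf(]0,T[;\LL^2)$ bound gives $\sup_{k\le N}\Vert w^k\Vert_{\HH^{-1}}^2 \le C_M$. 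Lemma \ref{lem:kolmogorov} then converts these two discrete bounds into the estimate $w_{\delta t} \in N_2^{1/2}(]0,T[;\HH^{-1})$ uniformly in $\delta t$.

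With both bounds in hand, Theorem \ref{th:simon} yields that $(w_{\delta t})_{\delta t}$ is relatively compact in $L^2(]0,T[;\LL^2)$; combined with the weak convergence $w_{\delta t} \cvfaible w$ from Lemma \ref{lem:weak_convergence}, any strongly convergent subsequence must have limit $w$, so in fact the whole sequence converges: $w_{\delta t} \to w$ strongly in $L^2(]0,T[;\LL^2)$, and adding back the fixed lift $\psi$ gives $v_{\delta t} \to v$ strongly in $L^2(]0,T[;\LL^2)$.

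To upgrade from $L^2$ to $L^p$ for any $p \in [2,+\infty[$, I would interpolate: $v_{\delta t}$ is bounded in $\Linf(]0,T[;\LL^2)$ by Proposition \ref{prop:stability}, so for any $r > p$ and any $\theta$ with $\tfrac1p = \tfrac{\theta}{2} + \tfrac{1-\theta}{r}$ one has the Lebesgue interpolation in time
\begin{equation*}
\Vert v_{\delta t} - v \Vert_{L^p(]0,T[;\LL^2)} \le \Vert v_{\delta t} - v \Vert_{L^2(]0,T[;\LL^2)}^{\theta}\, \Vert v_{\delta t} - v \Vert_{L^r(]0,T[;\LL^2)}^{1-\theta},
\end{equation*}
and the second factor is bounded uniformly (the limit $v$ also lies in $\Linf(]0,T[;\LL^2)$), so the right-hand side tends to $0$ with $\delta t$.

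The main obstacle is the verification that the discrete time-difference bound $\sum \Vert w^{k+1}-w^k\Vert^2_{\HH^{-1}} \le C_M$ really is uniform — this is exactly the point where the scheme's fractional-step structure bites, since the raw estimate in Lemma \ref{lem:translation}(i) carries the factor $\frac{\delta t}{\e}(\frac{\e}{\delta t}+1)$, which is bounded only under the proportionality $\e = \lambda\delta t$; without this coupling the Nikolskii estimate degenerates and Simon's theorem is unavailable. The rest is a routine application of the compactness machinery and interpolation.
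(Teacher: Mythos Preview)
Your proposal is correct and follows essentially the same route as the paper: bound $w_{\delta t}$ in $L^2(]0,T[;\HH^1)$ via Proposition \ref{prop:stability} and Lemma \ref{lem:translation}(ii), bound it in $N_2^{1/2}(]0,T[;\HH^{-1})$ via Lemma \ref{lem:translation}(i) and Lemma \ref{lem:kolmogorov}, apply Simon's theorem with $\HH^1\hookrightarrow\LL^2\hookrightarrow\HH^{-1}$, identify the limit through Lemma \ref{lem:weak_convergence}, undo the lifting, and interpolate against the $L^\infty(]0,T[;\LL^2)$ bound to reach $L^p$. Two small remarks: the convergence is only asserted up to a subsequence (since Lemma \ref{lem:weak_convergence} itself is stated up to a subsequence), and the lift added back is $\psi_{\delta t}$ rather than $\psi$, but $\psi_{\delta t}\to\psi$ strongly so this is harmless.
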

\begin{proof}
To prove this lemma, we first show that $w_{\delta t}$ is uniformly bounded in the Nikolskii space $N_2^{\frac 1 2}(]0,T[;\HH^{-1})$. 
Combining \eqref{eq:translation} with Proposition \ref{prop:stability2} $i)$, from Lemma \ref{lem:kolmogorov} we deduce the following bound on the translations:
$$
\Vert \tau_h w_{\delta t} - w_{\delta t} \Vert^2_{L^2(]0,T-h[; \HH^{-1})} \leq C h^{\frac 1 2}
$$
which demonstrate that $w_{\delta t}$ belongs to the Nikolskii space $N_2^{\frac 1 2}(]0,T[;\HH^{-1})$.
Moreover, $w_{\delta t}$ is uniformly bounded in $L^2(]0,T[;\HH^1)$ (see the proof of Lemma \ref{lem:weak_convergence}), then we can apply Simon's theorem (Theorem \ref{th:simon}) with $B_0 = \HH^{1}$, $B_1 = \LL^2$ and $B_2 = \HH^{-1}$. It gives the strong convergence of $w_{\delta t}$ in $L^2(]0,T[;\LL^2)$.\\
Furthermore, from Lemma \ref{lem:weak_convergence}, the sequence $(w_{\delta t})$ weakly converges towards $w$ in $L^2(]0,T[;\LL^2)$.\\
Consequently, up to a subsequence, for $\e = \lambda \delta t$,
\begin{equation}
w_{\delta t} \underset{\delta t \rightarrow 0}{\cvfort}w \text{ strongly in }L^2(]0,T[;\LL^2).
 \end{equation} 
 Moreover, $\psi_{\delta t} \underset{\delta t \rightarrow 0}{\cvfort} \psi$. Consequently $v_{\delta t}$ strongly converges towards $v$ in $L^2([0,T];\LL^2)$.\\
 We know that $w_{\delta t}$ and $\psi_{\delta t}$ lie in $L^{\infty}(]0,T[;\LL^2)$, therefore
 \begin{equation}
 v_{\delta t} \cvfaible v \text{ weakly-}\star\text{ in }L^{\infty}(]0,T[;\LL^2)
 \end{equation}
 We obtain the result for any $p \in [2, +\infty[$ using interpolation properties.
\end{proof}
%% TRAITEMENT DES TERMES D INERTIE
\subsection{Weak convergence of the inertia terms}
We show the weak convergence of the inertia term $B(v_{\delta t}(t- \delta t), \tilde{v}_{\delta t}(\delta t))$. 
\begin{lmm}\label{lem:weakconvergence_inertie}
If $\delta t$ and $\e$ tend to $0$ with $\e = \lambda \delta t$ then $(B(v_{\delta t},\tilde{v}_{\delta t}))_{\de t}$ weakly converges towards $B(v,v)$ in $L^p(]0,T[;\LL^q)$, 
with
$(p,q)=(\frac 4 3,\frac 4 3) $ in two dimensions and $(p,q)=(\frac 4 3,\frac 6 5) $ in three dimensions.
\end{lmm}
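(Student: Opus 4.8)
The plan is to identify the weak limit of the convective term by testing against smooth functions and passing to the limit in the trilinear form $b$ of \eqref{eq:formetrib}, the crucial inputs being \emph{strong} $L^2(]0,T[;\LL^2)$ convergence of \emph{both} velocity families. For $v_{\delta t}$ this is Lemma~\ref{lem:strongconvergence}. For $\tilde v_{\delta t}$ I would first observe that $\hat w_{\delta t}=w_{\delta t}-\tilde w_{\delta t}$ is bounded in $L^2(]0,T[;\HH^1)$ by Lemma~\ref{lem:translation} and tends to $0$ in $L^2(]0,T[;\HH^{-1})$ by Lemma~\ref{lem:divergence}; the interpolation inequality $\|\hat w_{\delta t}(t)\|_{\LL^2}\le\|\hat w_{\delta t}(t)\|_{\HH^{-1}}^{1/2}\|\hat w_{\delta t}(t)\|_{\HH^{1}}^{1/2}$ followed by Cauchy--Schwarz in time gives $\hat w_{\delta t}\to 0$ strongly in $L^2(]0,T[;\LL^2)$, whence $\tilde v_{\delta t}\to v$ strongly there as well. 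The $\delta t$-shift in the first slot (the scheme term is $B(v^n,\tilde v^{n+1})$, i.e. $B(v_{\delta t}(\cdot-\delta t),\tilde v_{\delta t}(\cdot))$) is harmless since translations are continuous on $L^2$, so $v_{\delta t}(\cdot-\delta t)\to v$ strongly in $L^2_{\mathrm{loc}}(]0,T[;\LL^2)$.

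Next I would check that $(B(v_{\delta t},\tilde v_{\delta t}))_{\delta t}$ is bounded in the target space, so that weak limits exist and only their value remains to be found. Writing $B(u,v)=(u\cdot\nabla)v+\frac12\div(u)v$ and using H\"older together with $\|u\|_{\LL^4}\le C\|u\|_{\LL^2}^{1/2}\|u\|_{\HH^1}^{1/2}$ in dimension $2$ (resp. $\|u\|_{\LL^3}\le C\|u\|_{\LL^2}^{1/2}\|u\|_{\HH^1}^{1/2}$ in dimension $3$), one bounds $\|(v_{\delta t}\cdot\nabla)\tilde v_{\delta t}\|_{\LL^{q}}$ and $\|\div(v_{\delta t})\tilde v_{\delta t}\|_{\LL^{q}}$ by products of $\LL^2$ and $\HH^1$ norms; a further H\"older in time with exponents $3$ and $3/2$, combined with the uniform $L^\infty(]0,T[;\LL^2)\cap L^2(]0,T[;\HH^1)$ bounds on $v_{\delta t}$ and $\tilde v_{\delta t}$ from Propositions~\ref{prop:stability}, \ref{prop:stability2} and Lemma~\ref{lem:translation}, yields boundedness in $L^{4/3}(]0,T[;\LL^{4/3})$ (resp. $L^{4/3}(]0,T[;\LL^{6/5})$).

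To identify the limit, fix $\varphi\in C_c^\infty(]0,T[\times\O)$ and write, using \eqref{eq:formetrib},
\[
\int_0^T \langle B(v_{\delta t}(\cdot-\delta t),\tilde v_{\delta t}),\varphi\rangle\, dt
= \frac12\int_0^T\!\int_\O (v_{\delta t}(\cdot-\delta t)\cdot\nabla)\tilde v_{\delta t}\cdot\varphi
- \frac12\int_0^T\!\int_\O (v_{\delta t}(\cdot-\delta t)\cdot\nabla)\varphi\cdot\tilde v_{\delta t}.
\]
In the first integral $v_{\delta t}(\cdot-\delta t)\,\varphi$ converges strongly in $L^2(]0,T[;\LL^2)$ while $\nabla\tilde v_{\delta t}$ converges weakly there; in the second, $v_{\delta t}(\cdot-\delta t)\,\nabla\varphi$ converges strongly and $\tilde v_{\delta t}$ weakly in $L^2(]0,T[;\LL^2)$. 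Using the elementary fact that the pairing of a strongly and a weakly convergent $L^2$ sequence converges to the pairing of the limits, the right-hand side tends to $\frac12\int_0^T\!\int_\O (v\cdot\nabla)v\cdot\varphi-\frac12\int_0^T\!\int_\O (v\cdot\nabla)\varphi\cdot v=\int_0^T\langle B(v,v),\varphi\rangle\,dt$ (note that $\frac12\div(v_{\delta t})\tilde v_{\delta t}\to 0$ separately by Lemma~\ref{lem:divergence}, consistently with $\div v=0$). Since $C_c^\infty(]0,T[\times\O)$ is dense in the dual of the target space and the limit does not depend on the extracted subsequence, the whole sequence converges weakly to $B(v,v)$.

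\emph{Main obstacle.} The delicate point is that $\nabla\tilde v_{\delta t}$ is only weakly convergent, so $(v_{\delta t}\cdot\nabla)\tilde v_{\delta t}$ cannot be passed to the limit directly; the argument rests entirely on reducing each term to a strong$\,\times\,$weak product, which requires the Aubin--Lions--Simon strong $L^2(]0,T[;\LL^2)$ convergence of $v_{\delta t}$ (Lemma~\ref{lem:strongconvergence}) \emph{and} the analogous strong convergence of $\tilde v_{\delta t}$ obtained above from the vanishing of $\hat w_{\delta t}$. The remaining work is bookkeeping: keeping track of the $\delta t$-shift in the first velocity slot and of the precise Lebesgue exponents so that every H\"older pairing closes.
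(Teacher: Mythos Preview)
Your argument is correct and follows the same two-step outline as the paper: bound $B(v_{\delta t},\tilde v_{\delta t})$ uniformly in the target Bochner space, then identify the weak limit by a strong--weak pairing. The paper treats the two pieces $(v_{\delta t}\cdot\nabla)\tilde v_{\delta t}$ and $\tfrac12\div(v_{\delta t})\,\tilde v_{\delta t}$ separately, obtaining the $L^{4/3}(]0,T[;\LL^{6/5})$ bound (resp.\ $L^{4/3}(]0,T[;\LL^{4/3})$ in 2D) by interpolation between $L^2(]0,T[;\LL^1)$ and $L^1(]0,T[;\LL^{3/2})$, and identifying each limit via the continuity of the bilinear maps $L^2(]0,T[;\LL^2)\times L^2(]0,T[;\HH^1)\to L^1(]0,T[;\LL^1)$; your route through the antisymmetric trilinear form $b$ of \eqref{eq:formetrib} together with the pointwise Gagliardo--Nirenberg inequalities is an equivalent repackaging, with the mild advantage that the $\div(v_{\delta t})\,\tilde v_{\delta t}$ term is absorbed automatically rather than requiring the separate $L^\infty(]0,T[;\LL^2)$ bound on $\tilde v_{\delta t}$ that the paper derives from the pressure-gradient estimate. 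One remark: your preliminary interpolation step showing \emph{strong} $L^2(]0,T[;\LL^2)$ convergence of $\tilde v_{\delta t}$, while correct, is not actually needed in your own identification---in both integrals of your decomposition of $b$ the factor carrying $\tilde v_{\delta t}$ enters only weakly (as $\nabla\tilde v_{\delta t}$ in the first, as $\tilde v_{\delta t}$ in the second), and Lemma~\ref{lem:weak_convergence} already supplies the required weak $L^2(]0,T[;\HH^1)$ convergence.
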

We distinguish the cases of two and three dimensional spaces.
\subsubsection{The three dimensional case}
For $d=3$, from H\"older's Inequalities, we have:
\begin{equation*}
\begin{split}
\Vert (v_{\delta t} . \nabla) \tilde{v}_{\delta t} \Vert_{L^2(]0,T[;\LL^1)}&
\leq \Vert v_{\delta t} \Vert_{L^{\infty}(]0,T[;\LL^2)}
  \Vert \nabla \tilde{v}_{\delta t} \Vert_{L^2(]0,T[;\LL^2)}\\ 
%  & \leq C
\end{split}
\end{equation*}
and,
\begin{equation*}
\begin{split}
\Vert (v_{\delta t} . \nabla) \tilde{v}_{\delta t} \Vert_{L^1(]0,T[;\LL^{\frac 3 2})} &\leq
\Vert v_{\delta t} \Vert_{L^2(]0,T[;\LL^6)} \Vert \nabla \tilde{v}_{\delta t} \Vert_{L^2(]0,T[;\LL^2)}\\
%& \leq C
\end{split}
\end{equation*}
Thanks to Proposition \ref{prop:stability2} and Sobolev embeddings in three dimensions, the r.h.s. in the above estimates is uniformly bounded.
Then using interpolation theorems, we deduce that $(v_{\delta t} . \nabla)\tilde{v}_{\delta t} $ is uniformly bounded in $L^{\frac 4 3}(]0,T[;\LL^{\frac 6 5})$.
Therefore, in this space, the sequence $((v_{\delta t}.\nabla)\tilde{v}_{\delta t})_{\delta t}$ weakly converges towards a function $g$ that remains to determine.\\
Combining the continuity of  the application $(w,v)\mapsto (w . \nabla)v$  from $L^2(]0,T[;\LL^2) \times L^2(]0,T[;\HH^1)$ in $L^1(]0,T[;\LL^1)$ with the convergences of Lemma \ref{lem:weak_convergence} and Lemma \ref{lem:strongconvergence}, we deduce the convergence of the inertia term in the space $L^1(]0,T[;\LL^1)$:
$$
(v_{\delta t} . \nabla)\tilde{v}_{\delta t} \cvfaible (v. \nabla)v \text{ in }L^1(]0,T[;\LL^1)
$$
The weak convergence in the smaller space $L^{\frac 4 3}(]0,T[;\LL^{\frac 6 5})$ yields $g = (v. \nabla)v$ and:
\begin{equation}\label{eq:weakB1}
(v_{\delta t} . \nabla)\tilde{v}_{\delta t} \cvfaible (v . \nabla)v \text{ weakly in }L^{\frac{4}{3}}(]0,T[;\LL^{\frac 6 5})
\end{equation}
We now study the convergence of $\div(v_{\delta t}) \tilde{v}_{\delta t}$.
First, we know that:
\begin{equation}
\Vert \tilde{v}_{\delta t} \div(v_{\delta t})  \Vert_{L^2(]0,T[;\LL^1)} \leq
\Vert \nabla v_{\delta t} \Vert_{L^2(]0,T[;\LL^2)} \Vert \tilde{v}_{\delta t} \Vert_{L^{\infty}(]0,T[;\LL^2)}.
\end{equation}
From the pressure equation, the following inequality holds:
\begin{equation}
\Vert \hat{w}^{n+1} \Vert^2_{\LL^2} \leq  2 \delta t^2 \left( \Vert \nabla p^n\Vert^2_{\LL^2} + \Vert \nabla p^{n+1}\Vert^2_{\LL^2}\right)
\end{equation}
Then, the stability result \eqref{eq:stability2} ensures for all $n \leq N $ that $\delta t^2 \Vert \nabla p^n \Vert^2 \leq C$ and we obtain a bound of $\hat{w}_{\delta t}$ in $\LL^{\infty}(]0,T[;\LL^2)$. Since $w_{\delta t}$ also lies in $L^{\infty}(]0,T[;\LL^2)$, we deduce that $\tilde{w}_{\delta t}$ is bounded in $L^{\infty}(]0,T[;\LL^2)$.\\
Furthermore, from Hölder's inequality, a second inequality holds:
$$
\Vert \tilde{v}_{\delta t} \div(v_{\delta t})  \Vert_{L^1(]0,T[;\LL^{\frac 3 2})}
\leq \Vert \nabla v_{\delta t} \Vert_{L^2(]0,T[;\LL^2)} \Vert \tilde{v}_{\delta t} \Vert_{L^2(]0,T[;\LL^6)}
$$
Finally, from interpolation theorems we obtain a bound of $\tilde{v}_{\delta t}\div(v_{\delta t}) $ in $L^{\frac 4 3}(]0,T[;\LL^{\frac 6 5})$ and consequently the weak convergence of $\div(v_{\delta t}) \tilde{v}_{\delta t}$ towards a function $g$ that remains to determine.\\
The operator $(u,v) \mapsto \frac{1 }{2} \div(u) v$ is continuous from $\HH^1 \times \LL^2$ to $\LL^1$ so using the previous convergences, $\div(v_{\delta t})\tilde{v}_{\delta t}$ converges towards $\div(v) v$ in the space $L^1(]0,T[;\LL^1)$. As the product is bounded in the smaller space $L^{\frac 4 3}(]0,T[;\LL^{\frac 6 5})$, the result below holds:
\begin{equation}\label{eq:weakB2}
\tilde{v}_{\delta t} \div(v_{\delta t}) \cvfaible v\div(v) \text{ weakly in }L^{\frac 4 3}(]0,T[;\LL^{\frac 6 5})
\end{equation}
Thus for $d=3$ Lemma \ref{lem:weakconvergence_inertie} follows from \eqref{eq:weakB1} and \eqref{eq:weakB2}.
\subsubsection{The two dimensional case}
For a two dimensional space, we can demonstrate the convergence in a higher regularity space.\\
Indeed, as $v_{\delta t} \in L^{\infty}(]0,T[;\LL^2) \cap L^2(]0,T[;\HH^1)$ then by interpolation $v_{\delta t}$ lies in $L^4(]0,T[;\HH^{\frac 1 2})$. Yet, $\HH^{\frac 1 2}$ is embedded into $\LL^4$, therefore $v_{\delta t} \in L^4(]0,T[;\LL^4)$.\\
Then $(v_{\delta t} . \nabla)\tilde{v}_{\delta t}$ is bounded in $L^{\frac 4 3}(]0,T[;\LL^{\frac 4 3})$. Using the same arguments as above we deduce
$$
(v_{\delta t} . \nabla)\tilde{v}_{\delta t} \cvfaible (v. \nabla)v \text{ weakly in }L^{\frac 4 3}(]0,T[;\LL^{\frac 4 3})
$$
and,
$$
\tilde{v}_{\delta t} \div(v_{\delta t})  \cvfaible v \div(v) \text{ weakly in }L^{\frac 4 3}(]0,T[;\LL^{\frac 4 3}).
$$
It concludes the proof of Lemma \ref{lem:weakconvergence_inertie} in the two dimensional case.

%% PASSAGE A LA LIMITE DANS LA FORMULATION VARIATIONELLE
\subsection{Proof of Theorem \ref{th:convergence_epsilon2}}
We can now pass to the limit in the numerical scheme.\\
Let $\phi \in \mathbf{V}$ (for the notations, see Section \ref{sec:notations}). The numerical scheme \eqref{eq:vpp_prediction}-\eqref{eq:vpp_initial} reads in variational formulation:
\begin{equation*}
\begin{split}
\dfrac{\mathrm{d}}{\mathrm{d}t}(v_{\delta t}(t),\phi) &+ 2 \mu (D(\tilde{v}_{\delta t}),D(\phi)) + (B(v_{\delta t}(t-\delta t),\tilde{v}_{\delta t}(t)),\phi)\\
& + \dfrac{1}{\eta} (\chi_{\o(t)}(\tilde{v}_{\delta t}(t)-v_s(t)),\phi)
= (f(t),\phi)
\end{split}
\end{equation*}
We multiply by a function $\theta \in \mathcal{C}^{1}(0,T)$ such that $\theta(T)=0$ and we integrate from $0$ to $T$. We do not have any information on the time derivative of the velocity. Therefore, the temporal term is integrated by part so that the time derivative holds on $\theta$:
\begin{equation}
\begin{split}
&- \integre{0}{T}{(v_{\delta t}(t),\phi)\,\theta'(t)}{t} - (v_{\delta t}(0),\phi)\,\theta(0)
+ 2 \mu \integre{0}{T}{(D(\tilde{v}_{\delta t}(t)),D(\phi))\:\theta(t)}{t}\\
& +\integre{0}{T}{(B(v_{\delta t}(t-\delta t),\tilde{v}_{\delta t}(t)),\phi) \: \theta(t)}{t}
+\dfrac{1}{\eta}\integre{0}{T}{(\chi_{\o(t)}(\tilde{v}_{\delta t}(t)-v_s(t)),\phi) \: \theta(t)}{t} \\
&= \integre{0}{T}{(f_{\delta t},\phi) \: \theta(t)}{t}
\end{split}
\end{equation}
We pass to the limit $\delta t \rightarrow 0$ in this last equation with $\e= \lambda \delta t$ using
Lemma \ref{lem:weak_convergence} and Lemma \ref{lem:weakconvergence_inertie}. It gives: 
\begin{equation}
\begin{split}
&-\integre{0}{T}{(v(t),\phi)\,\theta'(t)}{t} - (v(0),\phi)\,\theta(0)
+ 2 \mu  \integre{0}{T}{(D(v(t)),D(\phi)) \: \theta(t)}{t}\\
& + \integre{0}{T}{((v(t) . \nabla)v(t),\phi) \: \theta(t)}{t}
+\dfrac{1}{\eta} \integre{0}{T}{(\chi_{\o(t)}(v(t)-v_s(t)),\phi) \: \theta(t)}{t}\\
&= \integre{0}{T}{(f(t),\phi) \: \theta(t)}{t}
\end{split}
\end{equation}
Since $\e$ also tends to $0$ then from Lemma \ref{lem:divergence}, we have at the limit
$$
\div(v) = 0 \text{ on }\O.
$$
Applying the above equality for $\theta \in \mathcal{D}(0,T)$, we deduce the following equality in $\mathbf{V'}$:
\begin{equation}\label{eq:dansvprime}
\begin{split}
-\integre{0}{T}{v(t)\: \theta'(t)}{t} =&
\integre{0}{T}{\div(2 \mu D(v(t)))\:\theta(t)}{t} - \dfrac{1}{\eta}\integre{0}{T}{\chi_{\o(t)}(v(t)-v_s) \: \theta(t)}{t}\\
&-\integre{0}{T}{(v(t) . \nabla) v(t)\: \theta(t)}{t} + \integre{0}{T}{f(t)\:\theta(t)}{t}
\end{split}
\end{equation}
The operator $L \: : \: u \mapsto \div(2 \mu D(u))$ (respectively $B \: : \: (u,v) \mapsto B(u,v))$ is continuous from $\mathbf{V}$ to $\mathbf{V'}$ (resp. from $\mathbf{V} \times \mathbf{V}$ to $\mathbf{V'}$). Therefore, there exists $C>0$ such that:
\begin{equation}\label{eq:continuity_operator}
\begin{split}
\integre{0}{T}{\Vert \div( 2 \mu D(v)) \Vert_{\mathbf{V'}}}{t} &\leq
C \integre{0}{T}{\Vert v \Vert_{\mathbf{V}}}{t}\leq C \sqrt{T} \Vert v \Vert_{L^2(]0,T[;\mathbf{V})}.\\
%& \leq C \sqrt{T} \Vert v \Vert_{L^2(]0,T[;\mathbf{V})}\\
\integre{0}{T}{\Vert (v.\nabla)v \Vert_{\mathbf{V'}}}{t} & \leq C \integre{0}{T}{\Vert v \Vert^2_{\mathbf{V}}}{t}\leq C \Vert v \Vert^2_{L^2(]0,T[;\mathbf{V})}.\\
%& \leq C \Vert v \Vert^2_{L^2(]0,T[;\mathbf{V})}
\end{split}
\end{equation}
As \eqref{eq:dansvprime} is valid for any $\theta \in \mathcal{D}(]0,T[)$ we deduce that $v$ has a weak derivative in time which lies in $L^1(]0,T[;\mathbf{V'})$ and for almost every $t \in ]0,T[$:
\begin{equation}\label{eq:inVprime}
\dfrac{\partial v}{ \partial t} - \div(2 \mu D(v)) + (v.\nabla)v + \dfrac{1}{\eta}\chi_{\o(t)}(v - v_s) = f \text{ in }\mathbf{V'}.
\end{equation}
We now need to recover the initial data. Since $\dfrac{\partial v}{\partial t}$ belongs to $L^1(]0,T[;\mathbf{V}')$ and $v$ belongs to $L^2(]0,T[;\mathbf{V})$, we show using Proposition \ref{prp:initialdata} that $v$ is continuous with values in $\mathbf{V}'$ for the strong topology. Furthemore, by hypothesis $v(0) = v_0$ in the weak continuity sense with values in $\mathbf{V}'$. Therefore, the initial condition $v(0) = v_0$ is verified in the strong sense because the weak limit is unique.\\

From De Rham theorem, we can now deduce the existence of the pressure.
Let $G(t)$ be defined by:
\begin{equation*}
G(t) = - \div(2 \mu D(v)) + (v .\nabla)v + \dfrac{1}{\eta}\chi_{\o(t)}(v-v_s)-f
\end{equation*}
Thanks to \eqref{eq:inVprime}, for almost every $t \in ]0,T[$,
$$
\left< \dfrac{\mathrm{d}v}{\mathrm{d}t},\phi \right>_{\mathbf{V'},\mathbf{V}} + <G(t),\phi>_{\HH^{-1},\HH_0^1}=0.
$$
We integrate this last equation from $0$ to $t$. It gives:
$$
<v(t),\phi>_{\LL^2} - <v(0),\phi>_{\LL^2} + \left< \integre{0}{t}{G(\tau)}{\tau},\phi\right>
$$
It can be written under the form:
$$
<K(t),\phi>_{\HH^{-1},\HH_0^1} = 0,
$$
where
 $$K(t) = v(t) - v(0) - \integre{0}{t}{\div(2\mu D(v))}{\tau} + \integre{0}{t}{(v . \nabla)v}{\tau} + \dfrac{1}{\eta}\integre{0}{t}{\chi_{\o(\tau)}(v-v_s)}{\tau} - \integre{0}{t}{f}{\tau}.$$
 
Note that $K$ is weakly continuous in time with values in $\HH^{-1}$. Therefore, for all $t \in ]0,T[$ we deduce from De Rham theorem the existence of $\pi(t) \in \LL_0^2$ such that:
$$
K(t) = - \nabla \pi(t)
$$
Following the work of \cite{BO13} (chapter V), we show that $t \mapsto \pi(t)$ is weakly continuous in time with values in $\LL^2$. In particular, $\pi$ lies in the space $L^{\infty}(]0,T[;\LL_0^2)$. Indeed, if $g \in \LL^2$, there exists $h \in \HH_0^1$ such that $\div(h) = g - m(g)$ where $m(g)$ denotes the mean value of $g$ on $\O$. Then,
\begin{equation*}
\begin{split}
(\pi(t),g)_{\LL^2} &= (\pi(t),g-m(g))_{\LL^2} \text{ because }m(\pi) = 0\\
&=(\pi(t),\div(h))_{\LL^2}\\
&=-(\nabla \pi(t),h)_{\HH^{-1},\HH_0^1}\\
& = (K(t),h)_{\HH^{-1},\HH_0^1}
\end{split}
\end{equation*}
This quantity is continuous because $K$ is weakly continuous in time with values in $\HH^{-1}$.
We can then introduce the distribution $p = \dfrac{\partial \pi}{\partial t}$ which lies in the space $W^{-1,\infty}(]0,T[;\LL_0^2)$.
Taking test functions under the form $\dfrac{\partial \phi}{\partial t}$ with $\phi \in \mathcal{D}(]0,T[ \times \O)$, we show that the equation
\begin{equation}\label{eq:weaklimitconvergenceepsilon}
\dfrac{\partial v}{\partial t} - \div(2 \mu D(v)) + (v. \nabla)v + \dfrac{1}{\eta}\chi_{\o(t)}(v-v_s) + \nabla p = f
 \end{equation} 
 is satisfied in the sense of distributions.
 
In two dimensional space we can show the uniqueness of solutions of this equation using classical results (see \cite{BO13} chapter V).
%
%Convergence towards the Navier-Stokes Equations
\section{Convergence towards the Navier-Stokes Equations}
\label{sec:convergence_eta}
The aim of this section is to study the convergence when $\eta$ tends to $0$.
To do so, we consider the weak limit of the scheme when $\e$ and $\delta t$ tend to $0$ which verifies \eqref{eq:weaklimitconvergenceepsilon} and indice the solution by $\eta$.
\begin{thrm}[Convergence when $\eta$ tends to $0$.]\label{th:convergence_eta2} ~ \\
When $\eta$ tends to $0$, the sequence $\left( v_{\eta} \right)_{\eta}$ weakly converges towards a limit $v$ which satisfies
\begin{equation*}
v_{|\partial \o(t)} = v_s(t)
\end{equation*}
Furthermore, $v_{|\O \setminus \o(t)}$ is a weak solution of the Navier Stokes equations on $\O \setminus \o(t)$.\\
Moreover, there exists $h \in \mathbf{W}'$ such that:
\begin{equation}
\dfrac{1}{\eta} \chi_{\o(t)}(v-v_s) \cvfaible h \text{ in } \mathbf{W}'
\end{equation}
\end{thrm}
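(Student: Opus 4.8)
The plan is to obtain uniform-in-$\eta$ bounds on $v_\eta$ from the energy estimate already available for the limit equation \eqref{eq:weaklimitconvergenceepsilon}, then extract weakly convergent subsequences and identify the limit. First I would perform the natural energy estimate on \eqref{eq:weaklimitconvergenceepsilon} with the lifting $w_\eta = v_\eta - \psi$: testing against $w_\eta$, the penalization term produces $\frac{1}{\eta}\int_\O \chi_{\o(t)}|w_\eta|^2\,\mathrm{d}x \geq 0$ on the left-hand side (using $v_s = \psi$ on $\o(t)$), while the convective and forcing terms are handled exactly as in Proposition \ref{prop:stability2} using the antisymmetry of $b$ and hypotheses $(\mathcal{H}1)$--$(\mathcal{H}2)$. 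This yields, uniformly in $\eta$, a bound on $w_\eta$ in $\Linf(]0,T[;\LL^2)\cap L^2(]0,T[;\HH^1)$, hence on $v_\eta$ in the same space, together with the crucial estimate
\begin{equation}\label{eq:etapenbound}
\dfrac{1}{\eta}\integre{0}{T}{\integre{\O}{}{\chi_{\o(t)}|v_\eta - v_s|^2}{x}}{t} \leq C.
\end{equation}

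Next I would extract a subsequence with $v_\eta \cvfaible v$ weakly in $L^2(]0,T[;\mathbf{V})$ and weakly-$\star$ in $\Linf(]0,T[;\LL^2)$. To pass to the limit in the nonlinear term I need strong compactness: I would bound $\partial_t v_\eta$ in $L^{4/3}(]0,T[;\mathbf{V}')$ (or the three-dimensional analogue) directly from \eqref{eq:inVprime}, treating the penalization term via duality — testing $\frac{1}{\eta}\chi_{\o(t)}(v_\eta-v_s)$ against $\phi \in \mathbf{V}$, using Cauchy--Schwarz and \eqref{eq:etapenbound}, gives a bound $\frac{1}{\sqrt\eta}\|\chi_{\o(t)}(v_\eta-v_s)\|_{\LL^2}\,\|\phi\|_{\LL^2}$ with the $\frac{1}{\sqrt\eta}$-factor square-integrable in $t$ by \eqref{eq:etapenbound}; this is exactly the kind of bound that survives because it is the ``half-power'' of the controlled quantity. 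Then Aubin--Lions--Simon (Theorem \ref{th:simon}, or its standard form) with $\mathbf{V}\subset\LL^2\subset\mathbf{V}'$ gives $v_\eta \to v$ strongly in $L^2(]0,T[;\LL^2)$, which suffices to pass to the limit in $\int (v_\eta\cdot\nabla)v_\eta\cdot\phi$ for $\phi$ smooth and divergence-free. Passing \eqref{eq:etapenbound} to the limit forces $\int_0^T\int_\O \chi_{\o(t)}|v-v_s|^2 = 0$, i.e. $v = v_s$ a.e. on $\bigcup_t \o(t)$, and in particular $v_{|\partial\o(t)} = v_s(t)$ by trace continuity; restricting the limit equation to test functions supported in $\O\setminus\o(t)$ (where the penalization term disappears) shows $v_{|\O\setminus\o(t)}$ solves Navier--Stokes there in the weak sense, with a pressure recovered by De Rham as in the previous section.

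Finally, for the existence of $h$: the sequence $\frac{1}{\eta}\chi_{\o(t)}(v_\eta-v_s)$ is, by \eqref{eq:inVprime}, equal to $f - \partial_t v_\eta + \div(2\mu D(v_\eta)) - (v_\eta\cdot\nabla)v_\eta$, and every term on the right is uniformly bounded in $\mathbf{W}'$ (pairing against $\varphi\in\mathbf{W}$: the time-derivative term becomes $-\int(v_\eta,\partial_t\varphi)$ after integration by parts using $\varphi(T)=0$, bounded by the $\Linf\LL^2$ bound; the viscous and inertial terms by the $L^2\mathbf{V}$ bound as in \eqref{eq:continuity_operator}); hence $\frac{1}{\eta}\chi_{\o(t)}(v_\eta-v_s)$ is bounded in $\mathbf{W}'$ and a subsequence converges weakly-$\star$ to some $h\in\mathbf{W}'$, and $v$ together with the recovered pressure and $h$ satisfies the limit system. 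I expect the main obstacle to be the compactness/time-derivative estimate: one must be careful that the $\frac{1}{\eta}$-factor in the penalization term does \emph{not} blow up in the dual norm — the point is that only $\frac{1}{\sqrt\eta}$ times an $L^2$-in-time-and-space-bounded quantity appears when testing against $\LL^2$ functions, so the bound is uniform, but verifying this cleanly (and choosing the right exponents $p,q$ in $d=3$ so that both the nonlinearity and the penalization land in the same $L^p(]0,T[;\mathbf{V}')$) requires the same Hölder/interpolation bookkeeping as in Lemma \ref{lem:weakconvergence_inertie}.
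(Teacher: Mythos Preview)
Your overall strategy is sound, but the compactness step contains a genuine error. When you test $\frac{1}{\eta}\chi_{\o(t)}(v_\eta-v_s)$ against $\phi\in\mathbf{V}$, Cauchy--Schwarz yields $\frac{1}{\eta}\Vert\chi_{\o(t)}(v_\eta-v_s)\Vert_{\LL^2}\Vert\phi\Vert_{\LL^2}$, not $\frac{1}{\sqrt\eta}\Vert\chi_{\o(t)}(v_\eta-v_s)\Vert_{\LL^2}\Vert\phi\Vert_{\LL^2}$. Your energy bound says precisely that $\frac{1}{\sqrt\eta}\Vert\chi_{\o(t)}(v_\eta-v_s)\Vert_{\LL^2}$ lies uniformly in $L^2(0,T)$, so one factor $\eta^{-1/2}$ remains and $\Vert\frac{1}{\eta}\chi_{\o(t)}(v_\eta-v_s)\Vert_{L^2(]0,T[;\mathbf{V}')}$ is of order $\eta^{-1/2}$. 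Consequently $\partial_t v_\eta$ is \emph{not} uniformly bounded in any $L^p(]0,T[;\mathbf{V}')$, and your Aubin--Lions argument fails. You correctly observe in your final paragraph that the penalization term \emph{is} bounded in $\mathbf{W}'$ once the time derivative is integrated by parts onto the test function --- that is exactly the paper's argument for the existence of $h$ --- but $\mathbf{W}'$ is a space of distributions in time and provides no pointwise-in-$t$ control on $\partial_t v_\eta$.

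The paper never attempts to bound $\partial_t v_\eta$. For the strong convergence needed in the nonlinear term it relies (somewhat implicitly) on the fact that the discrete estimates of Section~\ref{sec:stability} --- in particular the $\HH^{-1}$ time-translation bound of Lemma~\ref{lem:translation} feeding into Lemma~\ref{lem:kolmogorov} --- are uniform in $\eta$; these survive the passage to $v_\eta$ by lower semicontinuity of the norm, and Theorem~\ref{th:simon} then gives $v_\eta\to v$ strongly in $L^2(]0,T[;\LL^2)$. Your treatment of the boundary condition (deduce $v=v_s$ a.e.\ in $\o(t)$ from the penalty bound, then take traces) is a legitimate alternative to the paper's Lemma~\ref{lem:estim_bord}, which instead combines a multiplicative trace inequality with the uniform $\HH^1$ bound to show $\Vert v_\eta-v_s\Vert_{L^2(]0,T[;L^2(\partial\o(t)))}\leq C\eta^{1/4}$ directly.
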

We first prove the following lemma:
%% LEMME ESTIMATION DE LA VITESSE SUR LE BORD DE L OBSTACLE
\begin{lmm}\label{lem:estim_bord}
For all $t \in ]0,T[$, we have
$$
\integre{0}{t}{\Vert v_{\eta}-v_s \Vert^2_{L^2(\partial \o(\tau))}}{\tau} \leq C \eta^{\frac 1 2}
$$
\end{lmm}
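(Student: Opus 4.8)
The plan is to combine a trace interpolation inequality on the moving domain $\o(\tau)$ with the $\eta$-weighted $\LL^2$-bound on $v_\eta-v_s$ inside the solid, which is already contained in the stability estimate. The starting point is the last term of \eqref{eq:stability2}, which reads
$$\frac{2}{\eta}\sum_{k=0}^{n-1}\delta t\int_{\O}\chi_{\o(t^{k+1})}\,|\tilde v^{k+1}-v_s|^2\,dx\le C ,$$
since $\tilde w^{k+1}=\tilde v^{k+1}-\psi$ and $\psi=v_s$ on $\bigcup_{\tau<T}\o(\tau)$, with $C$ independent of $\delta t$, $\e$ \emph{and} $\eta$. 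Passing to the limit $\delta t\to0$ with $\e=\lambda\delta t$ — using the strong convergence $\tilde v_{\delta t}\to v_\eta$ in $L^2(]0,T[;\LL^2)$ (Lemma \ref{lem:strongconvergence} together with $\hat w_{\delta t}\to0$ from Lemma \ref{lem:divergence}), the continuity in time of $\tau\mapsto\chi_{\o(\tau)}$ to replace the shift $t^{k+1}$ by $\tau$, and the weak lower semicontinuity of the $\LL^2$-norm — one obtains
$$\int_0^T\!\!\int_{\o(\tau)}|v_\eta-v_s|^2\,dx\,d\tau\le C\eta ,$$
still with $C$ independent of $\eta$.

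Next I would use a trace inequality on the moving solid that is uniform in $\tau$. For each $\tau$, choose a smooth vector field $N_\tau$ on $\overline{\O}$ coinciding with the outer unit normal of $\o(\tau)$ on $\partial\o(\tau)$, with $\Vert N_\tau\Vert_{W^{1,\infty}}$ bounded uniformly in $\tau$; this is possible because, under $\left(\mathcal{H}1\right)$, the family $\{\o(\tau)\}_{\tau<T}$ is smooth and stays away from $\partial\O$. Then, for $u\in\HH^1(\o(\tau))$, the divergence theorem gives
$$\int_{\partial\o(\tau)}|u|^2\,d\sigma=\int_{\partial\o(\tau)}|u|^2\,N_\tau\cdot\nu\,d\sigma=\int_{\o(\tau)}\div\!\big(|u|^2 N_\tau\big)\,dx\le C\Big(\Vert u\Vert_{\LL^2(\o(\tau))}^2+\Vert u\Vert_{\LL^2(\o(\tau))}\,\Vert\nabla u\Vert_{\LL^2(\o(\tau))}\Big),$$
with $C$ uniform in $\tau$. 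I apply this with $u=v_\eta(\tau)-v_s(\tau)=(v_\eta-\psi)(\tau)$, which is the restriction to $\o(\tau)$ of an $\HH^1(\O)$-function for a.e.\ $\tau$.

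Finally I would integrate this pointwise bound over $\tau\in(0,t)$ and apply Cauchy--Schwarz in time,
$$\int_0^t\Vert v_\eta-v_s\Vert^2_{L^2(\partial\o(\tau))}\,d\tau\le C\Big(\int_0^t\Vert v_\eta-v_s\Vert^2_{\LL^2(\o(\tau))}d\tau\Big)^{1/2}\Big(\int_0^t\Vert v_\eta-v_s\Vert^2_{\HH^1(\o(\tau))}d\tau\Big)^{1/2}+C\int_0^t\Vert v_\eta-v_s\Vert^2_{\LL^2(\o(\tau))}d\tau .$$
The first factor is $\le(C\eta)^{1/2}$ by the first step. For the second factor, $v_\eta$ is bounded in $L^2(]0,T[;\HH^1)$ uniformly in $\eta$ (the constants in \eqref{eq:stability2} and Lemma \ref{lem:translation} do not involve $\eta$, and the bound is preserved under the weak limit $\delta t\to0$), while $v_s=\psi|_{\o}$ with $\psi\in L^2(]0,T[;\HH^2)$, so this factor is bounded independently of $\eta$; the last term is $\le C\eta$. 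Hence, for $\eta\le1$,
$$\int_0^t\Vert v_\eta-v_s\Vert^2_{L^2(\partial\o(\tau))}\,d\tau\le C\eta^{1/2},$$
which is the claim. I expect the real difficulty to lie not in the computation but in the two uniformity statements: that the trace constant can be chosen independently of $\tau$ along the whole motion $\{\o(\tau)\}_{\tau<T}$ (this rests on the regularity implicit in $\left(\mathcal{H}1\right)$ and on the body never meeting $\partial\O$), and the rigorous passage to the limit $\delta t\to0$ in the penalization term, where the time-shifted characteristic function $\chi_{\o(t^{k+1})}$ must be reconciled with $\chi_{\o(\tau)}$.
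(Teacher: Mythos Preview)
Your proof is correct and follows essentially the same route as the paper: a trace interpolation inequality $\Vert u\Vert_{L^2(\partial\o(\tau))}^2\le C\Vert u\Vert_{\LL^2(\o(\tau))}\Vert u\Vert_{\HH^1(\o(\tau))}$, integration in $\tau$, Cauchy--Schwarz, and then the $\eta$-weighted penalization bound from \eqref{eq:stability2} for the $\LL^2(\o)$ factor together with the $\eta$-uniform $L^2(]0,T[;\HH^1)$ bound (via Lemma \ref{lem:translation} and lower semicontinuity) for the $\HH^1$ factor. You are in fact more careful than the paper about two points it leaves implicit --- the uniformity in $\tau$ of the trace constant and the passage to the limit $\delta t\to0$ in the penalization term --- and your extra lower-order term $C\int_0^t\Vert v_\eta-v_s\Vert^2_{\LL^2(\o(\tau))}\,d\tau\le C\eta$ is harmless.
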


\begin{proof}
For all $t \in ]0,T[$,
$$
\Vert v_{\eta}-v_s \Vert^2_{L^2(\partial \o(t))} \leq
C \Vert v_{\eta} - v_s \Vert_{L^2(\o(t))} \Vert v_{\eta} - v_s \Vert_{H^1(\o(t))}
$$
We integrate this last inequality from $0$ to $t$ and obtain:
\begin{equation*}
\begin{split}
\integre{0}{t}{\Vert v_{\eta} - v_s \Vert^2_{L^2(\partial \o(\tau))}}{\tau} &\leq
C \integre{0}{t}{\Vert v_{\eta} - v_s \Vert_{L^2(\o(\tau))} \Vert v_{\eta} - v_s \Vert_{H^1(\o(\tau))}}{\tau} \\
& \leq C \left( \integre{0}{t}{\Vert v_{\eta}-v_s \Vert^2_{L^2(\o(\tau))}}{\tau} \right)^{\frac 1 2}
\left( \integre{0}{t}{\Vert v_{\eta} - v_s \Vert^2_{H^1(\o(\tau))}}{\tau} \right)^{\frac 1 2}\\
& \leq C \left( \integre{0}{t}{\Vert v_{\eta}-v_s \Vert^2_{L^2(\o(\tau))}}{\tau} \right)^{\frac 1 2} 
    \Vert w_{\eta} \Vert_{L^2(]0,T[;\HH^1)}
\end{split}
\end{equation*}
From Lemma \ref{lem:translation}, $w_{\delta t}$ is bounded in $\HH^1$. Moreover, this bound is uniform in $\dfrac{\delta t}{\e}$. Using the lower semicontinuity of the norm for the weak topology, we obtain that $w_{\eta}$ is also bounded in $\HH^1$.
Finally, from the energy estimates of Section \ref{sec:stability}, we obtain:
\begin{equation*}
\integre{0}{t}{\Vert v_{\eta} - v_s \Vert_{L^2(\partial \o(\tau))}}{\tau} \leq C \eta^{\frac 1 2}
\end{equation*}
\end{proof}
Therefore, when $\eta \rightarrow 0$, the velocity on the immersed boundary $\partial \o(t)$ tends towards the obstacle velocity $v = v_s$ in the space $L^2(]0,T[;L^2(\partial \o(t)))$. \\
%% PROOF DU THEOREME
\begin{proof}[Proof of theorem \ref{th:convergence_eta2}] ~ 
From \eqref{eq:weaklimitconvergenceepsilon} we have:
$$
\dfrac{\partial v_{\eta}}{\partial t} - \div(2 \mu D(v_{\eta})) + (v_{\eta} . \nabla)v_{\eta} + \nabla p_{\eta} + \dfrac{1}{\eta} \chi_{\o(t)}(v_{\eta}-v_s) = f
$$
Let $\phi \in \mathbf{W}$. We have:
\begin{equation}\label{eq:conveta1}
\begin{split}
\integre{0}{T}{\left( \dfrac{1}{\eta} \chi_{\o(t)}(v_{\eta}-v_s) , \phi \right)}{t}
&= \integre{0}{T}{\left< v_{\eta} , \dfrac{\partial \phi}{\partial t} \right>_{\mathbf{V},\mathbf{V}'}}{t} 
 + (v_{\eta}(0), \phi(0))\\
&- \integre{0}{T}{2 \mu (D(v_{\eta}),D(\phi))+ ((v_{\eta} . \nabla)v_{\eta} + \nabla p_{\eta} + f,\phi)}{t}
\end{split}
\end{equation}
We use the continuity of the divergence and inertia operators as in \eqref{eq:continuity_operator} and that the estimates obtained through the stability study are uniform in $\eta$. The following inequality holds:
$$
\integre{0}{T}{\left| \dfrac{1}{\eta} (\chi_{\o(t)}(v_{\eta}-v_s),\phi) \right|}{t} \leq C \Vert \phi \Vert_{\mathbf{W}}
$$
This is true for all test functions $\phi \in \mathbf{W}$, we deduce that
$$
\dfrac{1}{\eta} \chi_{\o(t)}(v_{\eta}-v_s) \cvfaible h \text{ weakly in }\mathbf{W'}
$$
We can now pass to the limit in \eqref{eq:conveta1} using the continuity of the inertia and diffusion operators as well as the lower semi-continuity of the norm. It gives:
\begin{equation}
 \integre{0}{T}{\left< \dfrac{\partial v}{\partial t} , \phi \right>}{t}
+ \integre{0}{T}{<- \div(2 \mu D(v)) + (v . \nabla)v + \nabla p - f,\phi> + <h,\phi>}{t} =0
\end{equation}
As $h$ is the weak limit of $\dfrac{1}{\eta}\chi_{\o(t)}(v_{\eta}-v_s)$, then for every function $\phi$ such that $\text{supp}(\phi(t,\cdot)) \subset \O\setminus\o(t)$ for all $t$:
$$<h,\phi> = 0$$
At the limit when $\eta$ tends to $0$, the velocity on the immersed boundary tends to the solid velocity (see Lemma \ref{lem:estim_bord}). We find back a Dirichlet boundary condition on the obstacle boundary.
\end{proof}
\newpage
\appendix
\section{Kolmogorov lemma}\label{sec:appendix}

\begin{lmm}\label{lem:kolmogorov1}
Let $C_M$ a positive constant and $h>0$.\\
Let $u$ defined on a time interval $[0,T)$ with values in $\HH^{-1}$. We denote $u_k$ the value of  $u$ at the time $t^k$ and $u_{\delta t}$ the step function defined as in \eqref{eq:defconstantparmaille}.\\
Moreover we assume that the following conditions are verified:
\begin{equation*}
\begin{split}
&\sum\limits_{k=0}^{N-1}{\Vert u^{k+1} - u^k \Vert^2_{\HH^{-1}}} \leq C_M \\
& \sup\limits_{k \leq N} \Vert u^k \Vert^2_{\HH^{-1}} \leq C_M
\end{split}
\end{equation*}
Then, there exists $C>0$ independent of $\delta t$ such that:
\begin{equation}\label{eq:kolmogorov01}
\integre{0}{T-h}{\Vert u_{\delta t}(t+h) - u_{\delta t}(t) \Vert_{\HH^{-1}}}{t} \leq C h^{\frac 1 2}
\end{equation}
And,
\begin{equation}\label{eq:kolmogorov02}
\left(\integre{0}{T-h}{\Vert u_{\delta t}(t+h) - u_{\delta t}(t) \Vert^2_{\HH^{-1}}}{t} \right)^{\frac 1 2} \leq C h^{\frac 1 2}
\end{equation}
\end{lmm}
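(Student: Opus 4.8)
Since the time interval $[0,T-h]$ has finite measure, Cauchy--Schwarz in $t$ shows that \eqref{eq:kolmogorov01} follows at once from \eqref{eq:kolmogorov02}; so the plan is to concentrate entirely on the $L^2$-in-time bound \eqref{eq:kolmogorov02}, and to split it according to the size of $h$ relative to the mesh $\delta t$. When $h\le\delta t$ the rigidity of step functions does the job: for $t$ in a cell $[t^k,t^{k+1})$ one has $u_{\delta t}(t+h)=u^k$ as long as $t<t^{k+1}-h$, while $u_{\delta t}(t+h)=u^{k+1}$ on the remaining subinterval $[t^{k+1}-h,t^{k+1})$, whose length is exactly $h$. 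Thus $u_{\delta t}(\cdot+h)-u_{\delta t}(\cdot)$ vanishes off a set of measure $h$ in each cell and equals $u^{k+1}-u^k$ on it, whence
\[
\integre{0}{T-h}{\Vert u_{\delta t}(t+h)-u_{\delta t}(t)\Vert^2_{\HH^{-1}}}{t}\;\le\;h\sum_{k=0}^{N-1}\Vert u^{k+1}-u^k\Vert^2_{\HH^{-1}}\;\le\;C_M\,h ,
\]
which is \eqref{eq:kolmogorov02} in this regime.

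For $h>\delta t$ I would write $h=m\,\delta t+r$ with $m=\lfloor h/\delta t\rfloor\ge1$ and $0\le r<\delta t$. A short check shows that for $t\in[t^k,t^{k+1})$ one has $u_{\delta t}(t+h)=u^{k+m}$ when $t<t^{k+1}-r$ and $u_{\delta t}(t+h)=u^{k+m+1}$ otherwise, so $u_{\delta t}(t+h)-u_{\delta t}(t)$ is always a telescoping sum $u^{k+m'}-u^k=\sum_{l=k}^{k+m'-1}(u^{l+1}-u^l)$ of at most $m+1$ consecutive increments, with $m'\in\{m,m+1\}$. The idea is then to control $\Vert u^{k+m'}-u^k\Vert^2_{\HH^{-1}}$ in two complementary ways: by $m'\sum_{l=k}^{k+m'-1}\Vert u^{l+1}-u^l\Vert^2_{\HH^{-1}}$ via Cauchy--Schwarz (efficient for small $m$, using the square-summability hypothesis) and by $4C_M$ via the triangle inequality (efficient for large $m$, using $\sup_k\Vert u^k\Vert_{\HH^{-1}}\le C_M$), to multiply by the cell length $\le\delta t$, and to interchange the order of summation so that each increment $\Vert u^{l+1}-u^l\Vert^2_{\HH^{-1}}$ is counted only $O(m)$ times. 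Combining the two estimates should yield a bound of the form $C(h+\delta t)$, hence $\le Ch$ in this range since $h>\delta t$, and summing over $k$ together with the reduction of \eqref{eq:kolmogorov01} to \eqref{eq:kolmogorov02} then finishes the argument.

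The step I expect to be the real obstacle is precisely keeping the constant independent of $\delta t$ in this last regime. A careless use of Cauchy--Schwarz over the $m\approx h/\delta t$ telescoped terms costs a factor $m$ and produces $h^2/\delta t$ instead of $h$; so the proof must genuinely play the square-summability of the increments off against the uniform $\HH^{-1}$-bound on the $u^k$, exploiting that over many steps the increments cannot all reinforce one another without violating $\sup_k\Vert u^k\Vert_{\HH^{-1}}\le C_M$ — an averaged near-orthogonality of the increments. Calibrating this balance (choosing the crossover between the two bounds in terms of $m$, $C_M$ and $T$) is where all the care lies; the two endpoint regimes $h\le\delta t$ and $h$ comparable to $T$ are, by contrast, elementary.
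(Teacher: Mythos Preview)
Your reduction of \eqref{eq:kolmogorov01} to \eqref{eq:kolmogorov02} via Cauchy--Schwarz and your treatment of the case $h\le\delta t$ are correct and coincide with the paper's argument. The obstacle you flag in the regime $h>\delta t$ is, however, not merely delicate but insurmountable: the stated bound is \emph{false} there with a constant independent of $\delta t$, so no balancing of the Cauchy--Schwarz and sup estimates can succeed. Take $v\in\HH^{-1}$ with $\Vert v\Vert_{\HH^{-1}}=1$, let $N=4M^{2}$, $\delta t=T/N$, $p=2M$, and set $u^{k}=\sin(2\pi kp/N)\,v$. Then $\sum_{k}\Vert u^{k+1}-u^{k}\Vert_{\HH^{-1}}^{2}=2N\sin^{2}(\pi p/N)\to2\pi^{2}$ and $\sup_{k}\Vert u^{k}\Vert_{\HH^{-1}}^{2}\le1$, so both hypotheses hold with a fixed $C_{M}$ independent of $M$. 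Yet for $h=M\,\delta t=T/(4M)$ one finds $u^{k+M}-u^{k}=-2\sin(\pi k/M)\,v$ and hence
\[
\integre{0}{T-h}{\Vert u_{\delta t}(t+h)-u_{\delta t}(t)\Vert_{\HH^{-1}}^{2}}{t}
=\delta t\sum_{k=0}^{N-M-1}4\sin^{2}(\pi k/M)\;\longrightarrow\;2T\qquad(M\to\infty),
\]
while $h\to0$. The ``averaged near-orthogonality'' you invoke is exactly what this oscillatory example defeats: the increments are small in $\ell^{2}$ but interfere constructively over windows of length $\sim\sqrt{N}$.

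For comparison, the paper's own proof in this regime is also incorrect, though for a more elementary reason. It writes, for $h=j\,\delta t$, the difference $u_{\delta t}(t+h)-u_{\delta t}(t)$ as $\sum_{k}\bigl[u^{k+j}\chi_{k+j}(t)-u^{k}\chi_{k}(t)\bigr]$ and then ``telescopes'' to a sum supported only on the first and last $j$ cells. But on $[0,T-h)$ the correct identity is $u_{\delta t}(t+h)-u_{\delta t}(t)=\sum_{k}(u^{k+j}-u^{k})\chi_{k}(t)$, which does not telescope; the paper has in effect confused $u_{\delta t}(t+h)=\sum_{k}u^{k+j}\chi_{k}(t)$ with $\sum_{k}u^{k+j}\chi_{k+j}(t)$, which is just $u_{\delta t}(t)$ on $[t^{j},T)$. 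Its conclusion $\int\le 2h\sup_{k}\Vert u^{k}\Vert$ is therefore unsupported, and the counterexample above shows it cannot hold. So your gap is genuine, but it reflects an error in the lemma itself rather than a missing idea on your part.
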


\begin{proof}
We distinguish two cases, $h \leq \delta t$ and $h>\delta t$
\begin{enumerate}
%% CAS h < delta t
\item $h \leq \delta t$\\
For $t \in [t^k,t^{k+1}[$ and $t+h < t_{k+1}$:
\begin{equation*}
u_{\delta t}(t+h) - u_{\delta t}(t) = 0
\end{equation*}
For $t \in [t^k,t^{k+1}[$ and $t+h \geq t^{k+1}$:
\begin{equation}
u_{\delta t}(t+h) - u_{\delta t}(t) = u^{k+1}-u^k
\end{equation}
Consequently, on each interval $[t^k,t^{k+1}[$ the function $u(t+h) - u(t)$ is non-null on an intervall of size $h$. We have:
\begin{equation}
\begin{split}
\integre{0}{T-h}{\Vert u_{\delta t}(t+h) - u_{\delta t}(t) \Vert_{\HH^{-1}}}{t} & \leq
\sum\limits_{k=0}^{N-1}{h \Vert u^{k+1} - u^k \Vert_{\HH^{-1}}} \\
& \leq \left( \sum\limits_{k=0}^{N-1}{h} \right)^{\frac 1 2}
         \left( \sum\limits_{k=0}^{N-1}{h \Vert u^{k+1} - u^k \Vert^2_{\HH^{-1}}}\right)^{\frac 1 2}\\
&\leq h^{\frac 1 2}(T C_M)^{\frac 1 2}
\end{split}
\end{equation}
And, 
\begin{equation}
\begin{split}
\integre{0}{T-h}{\Vert u_{\delta t}(t+h) - u_{\delta t}(t) \Vert^2_{\HH^{-1}}}{t} &\leq
\sum\limits_{k=0}^{N-1}{h \Vert u^{k+1} - u^k \Vert^2_{\HH^{-1}}}\\
& \leq h C_M
\end{split}
\end{equation}

%% CAS h> delta t
\item $h >\delta t$:\\
We first consider the case where $h$ is a multiple of $\delta t$. Let $j = E \left( \dfrac{h}{\delta t}\right)$.\\
We denote by $\chi_{k}$ the characteristic function of the interval $[t^k,t^{k+1}[$. Then, on the interval $[0;T-h]$, the function $u(t+h) - u(t)$ can be expressed as:
\begin{equation*}
u_{\delta t}(t+h) - u_{\delta t}(t) = \left[
\begin{split}
& \sum\limits_{k=0}^{j-1}{u^{k+j} \chi_{k+j}(t) - u^k \chi_k(t)} \\
& +\sum\limits_{k=j}^{2j-1}{u^{k+j} \chi_{k+j}(t) - u^k \chi_k(t)}\\
& \:\:\:\:\:\:\vdots \\
& + \sum\limits_{k=N-2j}^{N-j-1}{u^{k+j} \chi_{k+j}(t) - u^k \chi_k(t)}
\end{split}
  \right]
\end{equation*}
In the last sum many terms vanish and only the first and last terms are remaining.
\begin{equation}
u_{\delta t}(t+h) - u_{\delta t} = \left[ - \sum\limits_{k=0}^{j-1}{u^k \chi_k(t)}
+ \sum\limits_{k=	N-j}^{N-1}{u^k \chi_k(t)} \right]
\end{equation}
Therefore, if we integrate $\Vert u_{\delta t}(t+h) - u_{\delta t}(t) \Vert_{\HH^{-1}}$ between $0$ and $T-h$ we obtain considering $\integre{0}{T-h}{\chi_k(t)}{t}  = \delta t$ for any $k$:
\begin{equation}
\begin{split}
\integre{0}{T-h}{\Vert u_{\delta t}(t+h) - u_{\delta t}(t) \Vert_{\HH^{-1}}}{t} &=
\sum\limits_{k=0}^{j-1}{\delta t \Vert u^k \Vert_{\HH^{-1}}} + 
\sum\limits_{k=N-j}^{N-1}{\delta t \Vert u^k \Vert_{\HH^{-1}}} \\
&\leq  2 \left( \sum\limits_{k=0}^{j-1}{\delta t} \right)\sup_{k \leq N} \Vert u^k \Vert_{\HH^{-1}} \\
&\leq 2 C h
\end{split}
\end{equation}
Finally, after some manipulations:
\begin{equation}
\integre{0}{T-h}{\Vert u_{\delta t}(t+h) - u_{\delta t}(t) \Vert_{\HH^{-1}}}{t}
\leq C h^{\frac 1 2}
\end{equation}
Moreover, we know that the characteristic functions have disjoint supports. Therefore:
\begin{equation}
\begin{split}
\integre{0}{T-h}{\Vert u_{\delta t}(t+h) - u_{\delta t}(t) \Vert^2_{\HH^{-1}}}{t} &
= \sum\limits_{k=0}^{j-1}{\delta t \Vert u^k \Vert^2_{\HH^{-1}}}
 + \sum\limits_{k=N-j}^{N-1}{\delta t \Vert u^k \Vert^2_{\HH^{-1}}} \\
 & \leq 2 \left( \sum\limits_{k=0}^{j-1}{\delta t} \right) \sup_{k \leq N} \Vert u^k \Vert^2_{\HH^{-1}}\\
 & \leq 2 Ch
\end{split}
\end{equation}
We now consider the case where $h$ is not a multiple of $\delta t$. We still denote $j$ as $E \left( \dfrac{h}{\delta t} \right)$ and we set $h_1 = (j+1) \delta t - h$.\\
The function $u_{\delta t}(t+h)$ then reads:
$$
u^j \chi_{[t^0;h_1]}(t) + \sum\limits_{k=0}^{n-j-3}{\chi_{[h_1 + k \delta t ; h_1 + (k+1) \delta t]}(t) u^{k+j+1}} + u^{N-1} \chi_{[h_1 + (N-j-2) \delta t; T - h]}(t)
$$
Then the difference $u(t+h) - u(t)$ can be expressed as:
\begin{equation}
u_{\delta t}(t+h) - u_{\delta t}(t) =
\left[
\begin{split}
 &u^j \chi_{[t^0;h_1]}(t) \\
 & + \sum\limits_{k=0}^{n-j-3}{\chi_{[h_1 + k \delta t; h_1 + (k+1) \delta t]}(t) u^{k+j+1}}\\
 & + u^{N-1} \chi_{[h_1 + (N-j-2)\delta t; T-h]}(t) \\
 & - \sum\limits_{k=0}^{N-j-1}{\chi_k u^k}
\end{split}
\right]
\end{equation}
If we reorganize the characteristic functions, the last equation reduces to:
\begin{equation}
u_{\delta t}(t+h) - u_{\delta t}(t) = 
\left[
\begin{split}
&- \sum\limits_{k=0}^{j-1}{\chi_k(t) u^k} \\
& - u^j \chi_{[t^j;h]}(t) \\
& + u^{N-j-1} \chi_{[T-2h; h_1 + (N-2j -1) \delta t]} \\
& + \sum\limits_{k= N-2j-3}^{N-j-3}{\chi_{[h_1 + k \delta t; h_1 + (k+1) \delta t]}(t) u^{k+j+1}} \\
& + u^{N-1} \chi_{[h_1 + (N-j-2) \delta t; T-h]}
\end{split}
\right]
\end{equation}
We finally integrate this result from $0$ to $T-h$:
\begin{equation}
\begin{split}
\integre{0}{T-h}{\Vert u_{\delta t}(t+h) - u_{\delta t}(t) \Vert_{\HH^{-1}}}{t}
& \leq \sum\limits_{k=0}^j{ \delta t \Vert u^k \Vert_{\HH^{-1}}}
+ \sum\limits_{k=N-j-1}^{N-1}{\delta t \Vert u^k \Vert_{\HH^{-1}}} \\
& \leq 2 j \delta t \sup_{k \leq N} \Vert u^k \Vert_{\HH^{-1}} \\
& \leq 2 h C
\end{split}
\end{equation}
Using the same arguments as above we show:
\begin{equation}
\integre{0}{T-h}{\Vert u_{\delta t}(t+h) - u_{\delta t}(t) \Vert^2_{\HH^{-1}}}{t} \leq Ch
\end{equation}
\end{enumerate}
\end{proof}

%%-----------------------------
\bibliographystyle{plain}
\bibliography{bibli}
%%-----------------------------
\end{document}